\newtheorem{theorem}{Theorem}[section]
\newtheorem{fact}[theorem]{Fact}
\newtheorem{conjecture}[theorem]{Conjecture}
\newtheorem{corollary}[theorem]{Corollary}
\newtheorem{example}[theorem]{Example}
\newtheorem{lemma}[theorem]{Lemma}
\newtheorem{remark}[theorem]{Remark}
\renewenvironment{proof}[1][Proof]{\noindent\textbf{#1.} }{\ \rule{0.5em}{0.5em}}
\begin{document}
\title[Laugwitz and Unicorn Conjectures  for
 norms with symmetries]{Proof
of Laugwitz Conjecture and Landsberg Unicorn Conjecture for Minkowski norms with   $SO(k)\times SO(n-k)$-symmetry.
}
\author{Ming Xu and Vladimir S. Matveev}

\address{Ming Xu \newline
School of Mathematical Sciences,
Capital Normal University,
Beijing 100048,
P. R. China}
\email{mgmgmgxu@163.com}

\address{Vladimir S. Matveev, Corresponding author\newline
Institut f\"{u}r Mathematik,
Fakult\"{a}t f\"{u}r Mathematik und Informatik,
Friedrich-Schiller-Universit\"{a}t Jena,
Germany
}
\email{vladimir.matveev@uni-jena.de}

\date{}

\begin{abstract} For a smooth strongly convex
Minkowski norm $F:\mathbb{R}^n \to \mathbb{R}_{\geq0}$, we  study isometries of the Hessian metric  corresponding  to  the function $E=\tfrac12F^2$. Under the additional assumption that $F$ is invariant  with respect to the standard action  of $SO(k)\times SO(n-k)$, we prove a conjecture  of  Laugwitz stated in  1965. Further, we describe all isometries between such Hessian metrics, and
prove Landsberg Unicorn Conjecture for Finsler manifolds of dimension $n\ge 3$ such that
 at every point the corresponding Minkowski norm has a linear $SO(k)\times SO(n-k)$-symmetry.

\textbf{Mathematics Subject Classification (2010)}: 52A20, 53C21, 53C30, 53B40.

\textbf{Key words}: Minkowski norm, Hessian isometry, Laugwitz Conjecture,
Landsberg  Unicorn Conjecture,  Legendre transformation
\end{abstract}\maketitle
\section{Introduction}

\subsection{Definitions and state of the art}

For a (smooth) function  $E(x_1,...,x_n)$, the Hessian ${\rm d}^2 E= \left( \frac{\partial^2  E}{\partial  x_i \partial x_j}\right)$ is a symmetric  bilinear form.  If it is positive definite, it defines a Riemannian metric called the {\it Hessian metric}.
Though the  construction
strongly depends on the coordinate system, Hessian  metrics naturally appear in many subjects of mathematics.

For example, for toric K\"ahler manifolds,  the metrics on the quotient space are  (locally)  Hessian metrics.
Metrics admitting nontrivial geodesic equivalence  are also   Hessian metrics, see e.g. \cite[\S 4.2]{BMR}. There is a strong relation between Hessian metrics and   the Hamiltonian construction in the theory of infinite-dimensional    integrable system of hydrodynamic type, see e.g.
\cite{GD}. Hessian  metrics  naturally come in many geometric  constructions of  Riemannian metrics inside convex domains
(see e.g. \cite{Yau}),
in affine geometry of hypersurfaces (see e.g. \cite{La1967,simon}) and
in information geometry (see e.g.  \cite{Sh2013}). We refer to   \cite{Sh2007} for a comprehensive study of differential geometry of Hessian metrics and their applications.

We are  interested in Hessian metrics that naturally appear in convex and Finsler geometry.
 They are defined on $\mathbb{R}^n \setminus{\{0\}}$ and  the function  $E$
satisfies the  following restriction:
 it is {\it positively 2-homogeneous}, that is, for any $\lambda{\geq0}$  we have
 $E(\lambda y)= \lambda^2 E(y)$.

 Under this assumption,  the property that ${\rm d}^2 E$  is positive definite  is  equivalent
 to the condition that
$E$ is positive on $\mathbb{R}^n\setminus \{0\}$ and that   $F:= \sqrt{ 2 E}$ satisfies the following properties:  it is  positively 1-homogeneous (i.e.,  $F(\lambda y)= \lambda F(y)$ for $\lambda\ge 0$), convex (i.e. $F( y_1 + y_2) \le
 F( y_1) + F( y_2)$)  and  strongly convex (i.e., the  second fundamental form  of the  {\it indicatrix}
  $S_F:= \{ y \in \mathbb{R}^n \mid  F(y)=1\}$ is positive definite).
Functions $F$ with such properties are called
{\it Minkowski norms}. All Minkowki norms we consider below are smooth and strongly convex.

It is known that  the indicatrix $S_F$ determines  the Minkowski norm  $F$ and
(as  we recall below)  that the  Hessian metric  of $E= \tfrac{1}{2} F^2 $ determines the function $E$.  So the study of
strongly convex bodies with smooth boundary can be reduced to the study of Hessian metrics for $E= \tfrac{1}{2} F^2$ and in particular apply methods and results of  Riemannian geometry. We refer to \cite{La1967,Sc2013}   for more details on the interrelation between Hessian geometry and convex geometry.
In later discussion, we will reserve the notation $F$ for the Minkowski norm and $E= \frac{1}{2} F^2 $ for the function we use to build a Hessian metric.

The appearance of Hessian metrics in
 Finsler geometry is related to that in the  convex geometry.
Recall that a  {\it Finsler metric}  on a smooth manifold $M$  with    $\operatorname{dim} M > 1$
 is a continuous
function $F$ on  $TM$ such that it is smooth on the slit tangent bundle $TM \setminus\{ 0\} $ and such that   its restriction to
 each tangent space $T_pM$ is a Minkowski norm.  The corresponding Hessian metric $g$ is then a Riemannian metric on the
slit
tangent space $T_pM\setminus \{0\}$. It was  called
{\it  the fundamental tensor} by L. Berwald \cite{Berwald1941} and it naturally comes to  many geometric constructions in Finsler geometry.

In this paper we study  isometries between the Hessian metrics of Minkowski norms.
We call the diffeomorphism $\Phi:\mathbb{R}^n\backslash\{0\}
\rightarrow\mathbb{R}^n\backslash\{0\}$ a {\it Hessian isometry}
from $F_1$ to $F_2$, if it is an isometry between the Hessian metrics $g_1= {\rm d}^2 E_1= \tfrac{1}2 {\rm d}^2 (F_1^2)$ and
$g_2= {\rm d}^2 E_2= \tfrac{1}2 {\rm d}^2 (F_2^2)$. By {\it  local Hessian isometry} we understand a positively 1-homogeneous diffeomorphism between two conic domains that is isometry with respect to the restriction of the Hessian metrics to these domains. Here
the {\it positive 1-homogeneity} for the local Hassian isometry $\Phi$ is the property that
$\Phi(\lambda x)=\lambda \Phi(x)$ for any $\lambda>0$ and any $x\in\mathbb{R}^n\backslash\{0\}$ where $\Phi$ is defined.
By  {\it conic} domain
we understand
$$
C(U):= \{\lambda  y \ \mid \ y \in U \ ,  \ \lambda >0\}, \textrm{\  where $U\subset \mathbb{R}^n\backslash\{0\}$}.
$$
Let us recall  some known facts (e.g. \cite{BCS,La1967}) that   follow from the positive 1-homogeneity of
 $F$.
\begin{itemize}
\item The Hessian metric determines  geometrically the ``radial''
rays, i.e., the sets  of the form $\{t y  \mid t\in \mathbb{R}_{>0}\}$, with nonzero $y$. Indeed, these rays are geodesics for the Hessian metrics, and are precisely those which are not complete.

\item The Hessian metric $g=\tfrac{1}{2} {\rm d}^2E$  determines  the functions $E$ and $F$ by $F(y)^2= g(y,y)$ for every $y\in \mathbb{R}^n\backslash\{0\}$.

\item  The Hessian metric $g$ is the cone metric over its  restriction  to the indicatrix $S_F$, i.e.,   $g=({\rm d}F)^2+F^2 g_{|S_F}$. That is,
 in any local coordinate system
$(r,\xi_2,...,\xi_{n})$ such that   $F(r,\xi_2,...,\xi_n)= r$, we have $g= dr^2 + r^2 \sum_{i,j=2}^{n} h_{ij}d\xi_id\xi_j$, where  the components $h_{ij}$ do not depend
on $r$.
\end{itemize}

These three observations imply that any  Hessian isometry $\Phi$ from $F_1$ to $F_2$ satisfies the positive 1-homogeneity
 and  diffeomorphically maps the indicatrix $S_{F_1}$ to $S_{F_2}$. Any  local  Hessian isometry
$\Phi:C(U_1)\to C(U_2)$ is 1-homogeneous by definition and diffeomorphically maps  $S_{F_1}\cap C(U_1)$ to  $S_{F_2}\cap C(U_2)$.

 Moreover, a positively 1-homogeneous mapping
 $\Phi$ which maps $S_{F_1}$ to $S_{F_2}$  is a
Hessian  isometry if an only if its restriction to $S_{F_1}$  an isometry between ${g_i}_{|S_{F_i}}$.

Let us now recall  some known examples of Hessian isometries.

If $\Phi:\mathbb{R}^n \to \mathbb{R}^n $ is a  linear isomorphism and $\phi^* F_2  =F_2\circ\Phi=F_1$, then $\Phi$ is trivially a Hessian
 isometry from $F_1$ to $F_2$. Indeed, for any linear coordinate change,
 the Hessian metric $g= \tfrac{1}{2}{\rm d}^2  (F^2)$ is covariant
 by the  Leibnitz formula. Such isometries will be   called  {\it linear isometries}.

Suppose dimension $n=2$. This case is completely understood and there are many examples of nonlinear Hessian isometries. To see this, let us
 consider the so-called  {\it generalised polar coordinates} on $\mathbb{R}^2 \setminus\{0\}$. This coordinate system is a special case of the
cone coordinate system discussed above.
It is  constructed as follows:
 the first coordinate is simply $F$,  so the indicatrix of $F$ is the coordinate line corresponding to the  value $1$.
    Next, on the indicatrix (which is a  closed convex simple curve) we denote by $\theta$ the arc-length  parameter corresponding to
		the Hessian metrics $g$.
		For each $y= (x_1,x_2)\in \mathbb{R}^2  \setminus \{0\}$, its $\theta$-coordinate is that
for $\tfrac{1}{F(y)} y\in S_F$.
See Fig. \ref{fig:1}.
\begin{figure}
\includegraphics[width=.4\textwidth]{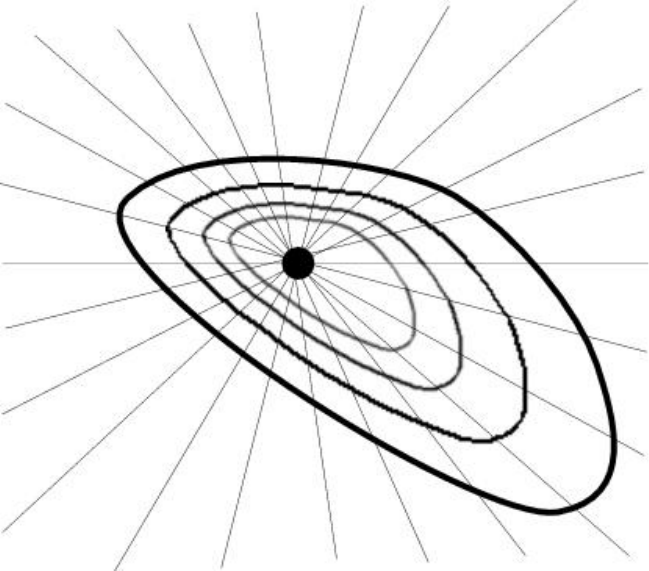}
\caption{Generalised polar coordinates $(F, \theta)$: First coordinate lines
are $\{F= \operatorname{const}\}$,  second coordinate lines are rays from zero. The second coordinate is chosen such that on $\{F=1\}$ it corresponds to the $g$-arclength parameter. } \label{fig:1}
\end{figure}

If
$E=\tfrac{1}{2} (x_1^2 + x_2^2)$  (so that  $g= dx_1^2 + dx_2^2$),  generalised polar coordinates
 are the usual  polar coordinates.  In the general case,  $\theta$ is still periodic and is defined
 up to addition of a constant to $\theta$ and the change of the sign, but the period  is not necessary $2\pi$.

  In the generalised polar  coordinates, the Hessian metric $g= \tfrac{1}{2}{\rm d}^2 (F^2)= {\rm d}F^2 + F^2  d\theta^2
$ is flat. So
we see that any two 2-dimensional  Minkowski  norms are locally  Hessian-isometric, and are Hessian-isometric if and only if their
 indicatrices  have the same  length in the corresponding Hessian metrics.

Let us now  consider $n\ge 3$. This case is almost completely open: in the literature
we found   one nonlinear    example of Hessian isometry, which we will recall and   generalise later,
and one negative result, which is the following Theorem:

\begin{theorem}[\cite{Br1967},  for alternative proof see \cite{Sc1968}] \label{thm:Brickell}
Let $F$  be a Minkowski norm on $\mathbb{R}^n$, $n\ge 3$. Assume it is \emph{absolutely homogeneous}, that is $F(\lambda y)=|\lambda|\cdot F(y)$ for every $\lambda\in\mathbb{R}$ and $y\in\mathbb{R}^n$.

Then, if  the Hessian metric $g= \tfrac{1}{2}{\rm d}^2 (F^2)$ on $\mathbb{R}^n\backslash\{0\}$ has zero  curvature,
 $F$ is \emph{Euclidean}, that is, $F= {\sqrt{ \sum_{i,j} \alpha_{ij} x_i x_j}}$ for a positive definite symmetric matrix $(\alpha_{ij})\in \mathbb{R}^{n\times n}$.  In this case,  every Hessian  isometry  is  linear.
\end{theorem}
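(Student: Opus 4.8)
The plan is to exploit the cone structure of the Hessian metric recalled above, to turn flatness into a rigidity statement for the indicatrix, and then to upgrade that intrinsic rigidity to the affine statement of the theorem. First I would write $g=\tfrac12{\rm d}^2(F^2)$ in the cone form $g={\rm d}r^2+r^2h$ with $r=F$ and $h=g_{|S_F}$. Since $n\ge 3$, the indicatrix $S_F$ is a compact, simply connected manifold of dimension $n-1\ge 2$. A standard computation of the sectional curvature of a metric cone shows that $g$ is flat if and only if $(S_F,h)$ has constant sectional curvature $1$; this is precisely where the hypothesis $n\ge 3$ enters, since for $n=2$ the cross-section is a curve and every such $g$ is flat. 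By the classification of space forms, a compact simply connected Riemannian manifold of constant curvature $1$ is isometric to the round unit sphere, so $(S_F,h)$ is isometric to the standard $S^{n-1}$.

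Identifying $(S_F,h)$ with $S^{n-1}$ identifies $(\mathbb R^n\setminus\{0\},g)$, as a metric cone, with $(\mathbb R^n\setminus\{0\},g_{\rm eucl})$; the resulting isometry $\Psi$ is positively $1$-homogeneous, maps radial rays to radial rays, and restricts to an isometry of $S_F$ onto the round sphere. In particular $\Psi$ is a Hessian isometry from $F$ onto a Euclidean norm $F_0$ with $F_0\circ\Psi=F$. Hence it remains only to show that $\Psi$ can be chosen linear, for then $F=F_0\circ\Psi$ is Euclidean.

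This last step is the main obstacle, since a priori a Hessian isometry onto a Euclidean norm need not be linear (already for $n=2$ there are many nonlinear ones), so $n\ge 3$ must be used once more, now through the mean Cartan torsion. Writing $g_{ij}=\partial_i\partial_j E$ and $C_{ijk}=\tfrac12\partial_i\partial_j\partial_k E$ (totally symmetric, with $C_{ijk}y^i=0$ by $0$-homogeneity of $g$), one has $\Gamma^k_{ij}=g^{kl}C_{lij}$ and, by a direct computation,
\begin{equation*}
R_{lijk}=g^{mp}\bigl(C_{klm}C_{pij}-C_{jlm}C_{pik}\bigr),
\end{equation*}
so flatness is a quadratic system in $C$. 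Contracting, the Ricci tensor equals $Q_{ij}-\mathbf C^{m}C_{mij}$, where $Q_{ij}=g^{km}g^{lp}C_{ikl}C_{jmp}\ge 0$ and $\mathbf C_m=g^{kl}C_{klm}=\partial_m\log\sqrt{\det g}$ is the mean Cartan torsion. The goal is to deduce from $\mathrm{Ric}\equiv 0$ that $\mathbf C\equiv 0$, and then to quote Deicke's theorem, according to which a Minkowski norm with vanishing mean Cartan torsion is Euclidean. The difficulty is that the trace of $\mathrm{Ric}\equiv 0$ only gives the pointwise identity $\|C\|^2=\|\mathbf C\|^2$, which does not force $C=0$ when $n\ge 3$; so one has to exploit the full Ricci-flatness together with a global argument on the compact indicatrix $(S_F,h)\cong S^{n-1}$ --- for instance a Bochner or maximum-principle estimate applied to the function $\log\sqrt{\det g}$, which descends to $S_F$ because $\det g$ is $0$-homogeneous --- to conclude $\mathbf C\equiv 0$.

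Finally, the statement that every Hessian isometry is then linear follows easily: if $\Phi$ is a Hessian isometry from such an $F$ to a Minkowski norm $F_2$, then $F$ is Euclidean by the above, so $g_2=(\Phi^{-1})^{\ast}g$ is flat and, by the same argument applied to $F_2$ (whose indicatrix is again a topological sphere), $F_2$ is Euclidean as well; thus $g$ and $g_2$ are constant positive-definite bilinear forms, and $\Phi$, being positively $1$-homogeneous and an isometry between them, extends to an isometry of $\mathbb R^n$ fixing the origin --- that is, a linear map.
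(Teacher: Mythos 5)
Your outline is correct up to and including the reduction: the cone form $g={\rm d}r^2+r^2g_{|S_F}$, the fact that flatness of $g$ for $n\ge 3$ forces $(S_F,g_{|S_F})$ to have constant curvature $1$ and hence to be isometric to the round sphere, the curvature formula quadratic in the Cartan tensor (it is Fact \ref{lemma-curvature-formula-Hessian-geometry} specialised as in \eqref{eq:fact}), and the trace identity coming from Ricci-flatness are all fine. But the decisive step --- passing from flatness to vanishing of the mean Cartan torsion (so that Deicke's theorem applies), or equivalently showing the cone isometry onto the Euclidean model can be taken linear --- is exactly where your argument stops. You yourself note that the pointwise identity $\|C\|^2=\|\mathbf C\|^2$ is inconclusive, and then defer to ``a Bochner or maximum-principle estimate applied to $\log\sqrt{\det g}$'' without producing it. That estimate is not a routine fill-in: it is the entire content of Brickell's theorem, and nothing in your sketch indicates how ellipticity or a sign would be extracted from Ricci-flatness of a cone metric. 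Note also that the paper does not prove this theorem; it cites \cite{Br1967} and \cite{Sc1968}, and explicitly records that in both proofs the absolute homogeneity of $F$ is essential.

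That last point exposes why the gap cannot be closed the way you propose: your outline never uses absolute homogeneity at all. A proof along your lines that avoided this hypothesis would establish the full Laugwitz Conjecture, which this very paper states as open and settles only under an additional $SO(k)\times SO(n-k)$-symmetry; moreover Remark \ref{Rem:0040} and the Asanov-type examples show the statement is delicate (it fails locally and for non-smooth norms), so no purely local or generic global argument on the round indicatrix can work without exploiting the central symmetry $F(-y)=F(y)$ (Schneider, for instance, uses it through the even/odd structure of the data on the sphere). A secondary overreach: in your final paragraph you apply the theorem to $F_2$, which is not assumed absolutely homogeneous, so with only Brickell's theorem in hand that step is unjustified (it again amounts to Laugwitz's conjecture); the intended, and correct, reading of the last claim is that for Euclidean $F$ the Hessian metric is flat on $\mathbb{R}^n\setminus\{0\}$ and its isometries fix the puncture and are therefore linear, giving the group $O(n)$.
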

The proofs in \cite{Br1967,Sc1968} are  different,  but the
 assumption that $F$   is absolutely homogeneous is essential  for both.

Let us now recall and slightly generalise the only known  example of nonlinear Hessian isometry in dimension $n\ge 3$.
We start with any Minkowski norm $F$ on the space $\mathbb{R}^n$ of column vectors, set $E=\tfrac{1}{2}F^2$ and   consider the corresponding
{\it Legendre transformation}:
 \begin{equation}\label{LT-original}
\Phi: \mathbb{R}^n\setminus \{0\}
 \to \mathbb{R}^n\setminus\{0\}\ , \ \ y= (x_1,...,x_n)^T\mapsto
 \left( \tfrac{\partial}{\partial  x_1}E(y),..., \tfrac{\partial}{\partial  x_n}E(y)\right)^T.
 \end{equation}
For the Euclidean Minkowski norm $F=\sqrt{x_1^2+...+x_n^2} $, the Legendre transformation $\Phi= \textrm{id}$.

Obviously
the function $\hat E=\Phi_*(E)$ on $\mathbb{R}^n\backslash \{0\}$
is a positive smooth function satisfying the positive 2-homogeneity. As we explain below in Remark \ref{rem:schneider} (see also \cite[\S 4.8]{BCS}),
the Hessian  of $\hat E$ is given by the matrix inverse to that for $g$ and is therefore positive definite.   Then,  $ \hat F= \sqrt{ 2\hat E}$ is a Minkowski norm.

  In  \cite{Sc2013} it was proved that  the Legendre transformation $\Phi$ in (\ref{LT-original})
is  a Hessian isometry from $F$ to $\hat F$.
Clearly, it is linear if and only if $F$ is Euclidean.

%In this paper, we will more generally call any Hessian isometry a {\it Legendre transformation} if it is the composition between the Legendre transformation in (\ref{LT-original}) with some linear isometries.

\begin{remark} \label{rem:schneider} R. Schneider's observation  that the Legendre transformation $\Phi$ in (\ref{LT-original}) is a Hessian isometry is important  for our  paper, so let us sketch a proof. Using $g_{ij}=\tfrac{\partial^2 E}{\partial x_i\partial x_j}$ for the Hessian metric of  $F$ and the explained above formula $E= \tfrac{1}{2} \sum_{i,j}g_{ij} x_i x_j$, the Legendre transformation $\Phi$ in (\ref{LT-original}) can be presented
at $y=(x_1,\cdots,x_n)^T\in\mathbb{R}^n\backslash\{0\}$ as
(see e.g. \cite[Eq. (14.8.1)]{BCS})
\begin{equation*} \label{eq:Phi1}
\Phi(y)=\left(\tfrac{\partial}{\partial x_j}\tfrac{1}{2}\sum_{i,k} g_{ik} x_i x_k\right)_{1\leq j\leq n}=\left(\sum_{i}    g_{ij}  x_i + \tfrac{1}{2}\sum_{i, k} \tfrac{\partial g_{ik}}{\partial x_j}x_ix_k \right)_{1\leq j\leq n}=\left(\sum_{i}    g_{ij}  x_i  \right)_{1\leq j\leq n}.
\end{equation*}
Here we have used $\sum_i\tfrac{\partial g_{ik}}{\partial x_j}x_ix_k=x_k\left(\sum_i\tfrac{\partial g_{ik}}{\partial x_j}x_i\right)=0$ by the positive $2$-homogeneity of $E$.
Then, its differential $d\Phi$  at $y$ has the Jacobi matrix
\begin{equation*} \label{eq:dphi}
\left(d\Phi \right)_{1\leq i,j\leq n} =    \left( g_{ij}  +  \sum_{k}  \tfrac{\partial g_{ik}}{\partial x_j} x_k \right)_{1\leq i,j\leq n}  = \left(g_{ij}\right)_{1\leq i,j\leq n}.
\end{equation*}
Since  the Legendre transformation is an involution,  $\Phi^{-1}$ is the Legendre transformation in (\ref{LT-original}) with $F=\hat{\hat{F}}$ and $\hat F$ exchanged,
the Hessian metric $\hat g$ for $\hat F$ at $\Phi(x)$ is represented by the inverse matrix $(g^{ij})_{1\leq i,j\leq n}$ for $F$ at $x$ (see  \cite[Proposition 14.8.1]{BCS} for more details). So the pullback $\Phi^* \hat{g}$ is given by the matrix
$$
 \left(\sum_{s,r}    g^{sr} g_{si} g_{rj}\right)_{1\leq i,j\leq n} =\left( g_{ij}\right)_{1\leq i,j\leq n},
$$
which  is the matrix of the  Hessian metric $g$ for $F$.
\end{remark}

%Let me explain what I meant in the initial version. I did the following calculations (which you possibly should doublecheck): fro the function $f=c_1 + c_2 \cos(2 t)+ c_3 \sin(2 t ) $ I calculated the (3,3)-element of the metric tensor. It has factor c_3\cos(t)+\sin(t)(c_1-c_2). We see that if c_3 is not zero, there always exists a value t from the interval [0,\pi] such that the finsler norm fails to be  convex.  And indeed I have seen in literature a discussion that Asanov unicorns can not be flat. I will try to remember the place (my first guesses  are Crampin and Vincze).
%It smoothly depends on $s$.
%
%Our goal is to show that for each $s$ the mapping  $\tau_s$  is linear. Because it is 1-homogeneous (i.e., $\tau_s(\lambda y)= lambda \tau_s(y)$, this is equivalent to the condition that the second partial dervatives of $\tau_s$ (by the linear coordinates of $T_{c(0)}M$)
%vanish.

Let us    now  modify   the above   example. We start with the Euclidean Minkowski norm $F_0= \sqrt{x_1^2+\cdots+x_n^2}$, and slightly deform it on two conic open subsets $C(U_1)$ and $C(U_2)$ of $\mathbb{R}^n\backslash\{0\}$, where
$U_1$ and $U_2$ are two open subsets of $S_{F_0}$ with disjoint closures. We obtain  a new Minkowski norm $F_1$.
Denote by  $\Phi$ the Legendre transformation and  by  $\hat{F}_1$ the push-forward of $F_1$.  See Fig. \ref{fig:2}. The second new Minkowski norm $F_2$ is constructed as follows: it coincides with
$\hat{F}_1$ on $C(U_2)$ and with $F_1$ on $\mathbb{R}^n\backslash C(U_2)$.  It is still  a smooth strictly convex Minkowski norm.
Next, we consider the mapping $\tilde \Phi$ such that it is   identity  on $C(U_1)$ and $\Phi$ on $\mathbb{R}^n\backslash C(U_1)$. It is
a Hessian isometry  from $F_1$ to $F_2$. If  $F_1$  is different  from    $F_0$ on both $C(U_1)$ and $C(U_2)$, $\tilde{\Phi}$ is
neither a linear isometry nor a Legendre transform.

\begin{figure}
\includegraphics[width=\textwidth]{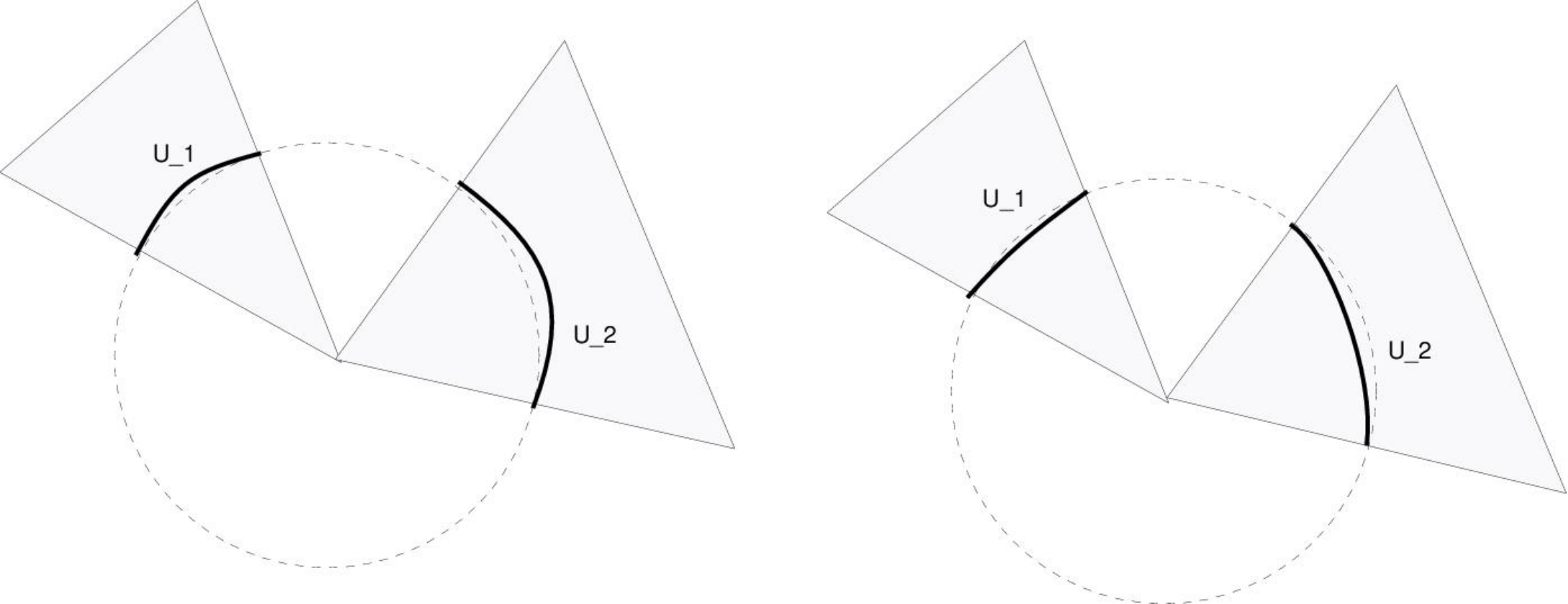}
\caption{Construction
 of  nonlinear and nonlegendre Hessian isometry: the functions $E_1=E$ is different from $x_1^2+...+x_n^2$ in cones over
$U_1$ and $U_2$ (grey triangles). The function $\hat E$  (second picture) is the Legendre-transform of $E=E_1$. The function $E_2$
coincides with $E_1$ everywhere but in $C(U_2)$ and in $C(U_2)$  it coincides with $\hat E$  } \label{fig:2}
\end{figure}

One can build this example such that $F_1$ and $F_2$  are preserved by  the standard blockdiagonal action of   $O(k)\times O(n-k)$ (of course in this case the  conic open sets $C(U_i)$ must be $O(k)\times O(n-k)$-invariant).  One can  impose additional symmetries on the construction so the resulting metric
$F_2$  has, in addition to this linear $O(k)\times O(n-k)$-symmetry, a nonlinear Hessian  self-isometry.
One can further generalise this example by starting with $F_0$ which is  not Euclidean  but still has `Euclidean pieces' and  by  deforming $F_0$ in
more than two (even infinitely many) open subsets.

%Instead of the round sphere, one can start with a Minkowski norm such that it coincides with one Euclidean norm in  one  conic neigbourhood, and with another Euclidean norm in another conic neighbourhood, and play the same game in each conic neighbourhood.
%
%As far as we know, these are all  known  global (i.e., defined  on the whole $\mathbb{R}^n \setminus \{ 0\}$)   examples.
\label{subsection-1-1}
\subsection{Results}

We consider a Minkowski norm $F$ on $\mathbb{R}^n$ with $n\geq 3$
which has a linear $SO(k)\times SO(n-k)$-symmetry, and study {\it connected isometry group} (i.e., the identity component of the group of all isometries) of the Hessian metric of $F$. We prove:

\begin{theorem}  \label{thm:main1}
Suppose $F$ is a Minkowski norm on $\mathbb{R}^n$ with $n\geq 3$, which is  invariant with respect to the standard block diagonal action of the group $SO(k)\times SO(n-k)$ with $1\leq k\leq n-1$. Let
$G_0$ be the connected isometry group for the Hessian metric $g=\tfrac12{\rm d}^2 F^2$ on $\mathbb{R}^n\backslash\{0\}$.

Then, every element $\Phi \in G_0$ is linear. Moreover, if $F$ is not  Euclidean,  then  $G_0$ together with  its action coincides with $SO(k)\times SO(n-k)$.   \end{theorem}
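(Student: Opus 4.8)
The plan is to transfer the question to the indicatrix $S_F$ equipped with the induced metric $\bar g:=g_{|S_F}$ and to run a cohomogeneity-one argument there. By the facts on the cone structure recalled above, every isometry of $(\mathbb{R}^n\setminus\{0\},g)$ is positively $1$-homogeneous, hence preserves $S_F=\{y\mid g(y,y)=1\}$ and restricts to an isometry of $(S_F,\bar g)$; conversely any isometry $\psi$ of $(S_F,\bar g)$ is the restriction of the $1$-homogeneous Hessian self-isometry $y\mapsto F(y)\,\psi\!\left(y/F(y)\right)$. This correspondence is a continuous isomorphism of transformation groups and it sends $A_{|S_F}$ back to $A$ for every $A\in SO(k)\times SO(n-k)$. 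Hence it suffices to show that the identity component $I_0:=\mathrm{Isom}_0(S_F,\bar g)$ coincides with the image $K$ of $SO(k)\times SO(n-k)$ whenever $F$ is not Euclidean; in the Euclidean case $g$ has constant coefficients, so every isometry of $(\mathbb{R}^n\setminus\{0\},g)$ extends over the origin to an affine isometry fixing $0$, i.e.\ to a linear one, and the first assertion of the theorem holds trivially.

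Assume henceforth that $F$ is not Euclidean. The standard block action of $K$ on $(S_F,\bar g)$ has cohomogeneity one: its principal orbits are the products $S^{k-1}\times S^{n-k-1}$ of dimension $n-2$, and its singular orbits are the sections of $S_F$ with the coordinate subspaces $\mathbb{R}^k\times\{0\}$ and $\{0\}\times\mathbb{R}^{n-k}$, on each of which $F$ restricts to a Euclidean norm; a short computation shows that in the metric $\bar g$ these singular orbits are round spheres of constant curvature $1$. Since $K\subseteq I_0$ acts almost effectively, $I_0$ acts on $S_F$ with cohomogeneity $0$ or $1$. In the cohomogeneity-one case the principal $I_0$-orbits coincide with the principal $K$-orbits (a principal $K$-orbit sitting inside a connected principal $I_0$-orbit of the same dimension $n-2$ is open and, being compact, closed in it). Restricting to one principal orbit $O\cong S^{k-1}\times S^{n-k-1}$ with its induced metric---a Riemannian product of round metrics, because for $k,n-k\ge2$ the norm $F$ is automatically $O(k)\times O(n-k)$-invariant (hence so is $g$), killing any mixed term, while for $k=1$ or $n-k=1$ there is only one spherical factor---yields a homomorphism $I_0\to\mathrm{Isom}_0(O)=SO(k)\times SO(n-k)$. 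It is injective: an element of $I_0$ fixing the hypersurface $O$ pointwise either also fixes the normal line field, and is then the identity, or reverses it, and is then orientation-reversing, hence outside the identity component of $\mathrm{Isom}(S_F,\bar g)$ (recall $S_F\cong S^{n-1}$ is orientable). Comparing dimensions with $K\subseteq I_0$ and using connectedness, $I_0=K$, as required.

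It remains to exclude the transitive case for non-Euclidean $F$. If $I_0$ acts transitively, $\bar g$ is a homogeneous metric on $S^{n-1}$ that is still invariant under the cohomogeneity-one subgroup $K$; the $K$-invariance forces $\bar g$ into the doubly-warped form $a(\phi)^2\,d\phi^2+b(\phi)^2 g_{S^{k-1}}+c(\phi)^2 g_{S^{n-k-1}}$, with no term mixing the two spherical factors. Either by running through the classification of transitive actions on spheres, or by writing out the Killing equation for a Killing field of $\bar g$ transverse to the $K$-orbits, one then shows that $\bar g$ must be round. When $k\ge2$ and $n-k\ge2$ one finishes as follows: a singular orbit is a positive-dimensional totally geodesic round subsphere of $(S_F,\bar g)$ (totally geodesic, as the fixed-point set of an isometry subgroup), the computation above gives it curvature $1$, and a totally geodesic round subsphere of a round sphere inherits the ambient curvature; hence $\bar g$ has curvature $1$, the cone $g=dr^2+r^2\bar g$ is flat, and $F$---absolutely homogeneous for $k,n-k\ge2$---is Euclidean by Theorem~\ref{thm:Brickell}, a contradiction. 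The remaining cases $k=1$ or $n-k=1$ (where $F$ need not be even) are settled by integrating directly the ordinary differential equation that roundness of $\bar g$ imposes on $F(y',|y''|)$, which again forces $F$ to be Euclidean. Thus the transitive case cannot occur; combining with the previous paragraph, $I_0=K$ when $F$ is not Euclidean, so $G_0$ together with its action coincides with $SO(k)\times SO(n-k)$ and every element of $G_0$ is linear, while for Euclidean $F$ linearity was already noted.

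I expect the main obstacle to be the transitive case---concretely, the implication ``$\bar g$ homogeneous and $K$-invariant $\Rightarrow$ $\bar g$ round'', together with the separate treatment of $k=1$ and $n-k=1$, where $F$ need not be absolutely homogeneous and Theorem~\ref{thm:Brickell} is unavailable, so that roundness of $\bar g$ must be converted into Euclideanness of $F$ by an explicit ordinary-differential-equation computation for $F(y',|y''|)$. The reduction to $S_F$, the cohomogeneity-one case, and the Euclidean case are by comparison soft.
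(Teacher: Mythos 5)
Your reduction to the indicatrix and your cohomogeneity-one branch are sound and essentially reproduce what the paper does for the non-homogeneous case: restriction to a principal orbit (a product of round spheres, with the mixed terms killed by the $O(k)\times O(n-k)$-invariance from Lemma \ref{Lem:1}), injectivity of the restriction map, and a dimension count giving $I_0=K$. The Euclidean case and the final bookkeeping are also fine. The problem is the transitive case, which is exactly where the paper's real work lies, and there your argument is only a sketch with a genuine gap. The pivotal claim ``$\bar g$ homogeneous and $K$-invariant $\Rightarrow$ $\bar g$ round'' is not proved; as literally stated it is even false (Berger metrics on $S^3$ are homogeneous and invariant under the block $SO(2)\times SO(2)$), so everything hinges on the doubly-warped (no-mixing) refinement, and neither of your two proposed routes --- running through the classification of transitive sphere actions, or analysing the Killing equation transverse to the $K$-orbits --- is carried out. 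This is not a routine verification: it is the substitute for the paper's key mechanism, namely Cartan's trick (the isotropy at the distinguished point $y_0$ contains an isometry acting as $-\mathrm{id}$ on $T_{y_0}S_F$, forcing the Cartan tensor to vanish there) combined with Shima's curvature formula for Hessian metrics (Fact \ref{lemma-curvature-formula-Hessian-geometry}), which gives flatness of $g$ at $y_0$ and hence, by homogeneity, constant curvature exactly $1$ on $S_F$. Nothing in your proposal plays this role.

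Even granting roundness, your endgame is incomplete precisely where the paper has to work hardest. For $2\le k\le n-2$ your totally-geodesic-singular-orbit observation does pin the curvature to $1$ (modulo the $k=2$ case, where the singular orbit is a circle and you need its length $2\pi$ plus the great-circle argument rather than ``curvature $1$''), after which Theorem \ref{thm:Brickell} applies; that part can be made to work. But for $k=1$ (or $n-k=1$) roundness only gives constant curvature $c$ with no control on $c$, Brickell is unavailable, and the ``direct integration of the ODE'' you defer to is exactly the delicate step: the relevant constant-curvature ODE has non-Euclidean solutions (the Asanov-type norms discussed in Section \ref{subsection-1-5}, whose indicatrices have constant curvature $\ne 1$, and the $c_3\sin 2\theta$ solutions of Remark \ref{Rem:0040}), which can only be excluded by exploiting smoothness and evenness of $f$ at the poles, as the paper does in Lemma \ref{lem:final} (including the reduction of $n>3$ to $n=3$ via a totally geodesic $3$-dimensional fixed-point subspace). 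You yourself flag this as the main obstacle; as it stands it is deferred, not resolved, so the proposal does not yet constitute a proof of the non-Euclidean case.
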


In Theorem \ref{thm:main1},
the {\it  standard block diagonal
$SO(k)\times SO(n-k)$-action} is the left multiplication on column vectors  by all block diagonal matrices $\mathrm{diag}(A',A'')$ with
%$$\begin{pmatrix} A_{k\times k} & 0_{k\times (n-k)}  \\
%0_{(n-k)\times k} & A_{(n-k)\times (n- k)}\end{pmatrix}, $$
%where
$A'\in SO(k)$ and $A''\in SO(n-k)$.
%Formally, in Theorem \ref{thm:main1} we can
%allow $k=0$ or $k=n$  but these cases are trivial and  for cosmetic reasons  we assume $1 \le k\le n-1$.

Theorem \ref{thm:main1} is sharp in the following sense:
\begin{itemize} \item By an $SO(k)\times SO(n-k)$-equivariant modification for the Legendre transformation we have discussed at the end of Section \ref{subsection-1-1}, we can construct some nonlinear Hessian isometry $\tilde{\Phi}$. So $G_0$ in Theorem \ref{thm:main1}
can not be changed to the full group $G$ of all Hessian isometries on $(\mathbb{R}^n,F)$.
%It is important that we consider the connected component $G_0$ of the group of
%isometries and not the whole group. Indeed,   in the previous section  we explained the construction of $O(k)\times O(n-k)$-invariant   examples  whose  isometry group contains nonlinear transformations.
\item If $F$ is Euclidean, i.e., $F= \sqrt{\sum_{i,j}\alpha_{ij} x_i x_j}$ for some positive definite symmetric matrix $(\alpha_{ij})$, its Hessian metric $g$ on $\mathbb{R}^n\backslash\{0\}$
    is the restriction of a flat metric on $\mathbb{R}^n$. In this case  the group of all Hessian isometries is $O(n)$ and the
		connected isometry group $G_0$  is $SO(n)$.
	\item Theorem \ref{thm:main1} is not true locally. In Remark \ref{Rem:0040} we will show  the existence of  (smooth positively 1-homogeneous  strongly convex $SO(2)$-invariant)
	functions $F$ defined on a conic open subset of $\mathbb{R}^3\backslash\{0\}$ such that it is not Euclidean but the
	corresponding Hessian metric  is flat.
See also discussion in Section \ref{subsection-1-5}.
	
	\item Theorem \ref{thm:main1}  and also other results of our paper  trivially  hold  when $k=0$ or $k=n$, since in this case the Minkowski norm is automatically Euclidean.  In the proofs we assume without loss of generality $1\leq k\leq n/2$.
%non-Euclidean  Hessian metrics the  isometry Lie algebra  is transitive on the  sphere without two poles  and
%	has  dimension $\tfrac{(n-1)n}{2}$. Notice that the standard
%	action of  $O(k)\times O(n-k)$  has codimension $1$  and is not transitive.
%	The dimension of $O(k)\times O(n-k)$ is $\tfrac{k(k-1)}{2} +\tfrac{(n-k)(n-k-1)}{2} \le   \tfrac{(n-1)(n-2)}{2}<\tfrac{(n-1)n}{2}.$ 	
\end{itemize}

 Theorem \ref{thm:main1} implies that for any two non-Euclidean  Minkowski norms $F_1$ and $F_2$ which are
 invariant with respect to the standard blockdiagonal action of the group $SO(k)\times SO(n-k)$, with $n\geq 3$ and $1\leq k\leq n-1$,
    a Hessian isometry $\Phi$ from $F_1$ to $F_2$
    must map orbits to orbits (i.e., $\Phi$ maps each $SO(k)\times SO(n-k)$-orbit to an $SO(k)\times SO(n-k)$-orbit).

Next, we consider two Minkowski norms $F_1$ and $F_2$ on $\mathbb{R}^n$ which are invariant for the standard block diagonal action
of $SO(k)\times SO(n-k)$,
and study {\it local Hessian isometry which maps orbits to orbits}.
That means the local Hessian isometry $\Phi$ from $F_1$ to $F_2$ is defined between two $SO(k)\times SO(n-k)$-invariant conic open sets, $C(U_1)$ and $C(U_2)$,
under the additional assumption that $\Phi$ maps each $SO(k)\times SO(n-k)$-orbit in $C(U_1)$  to  that in $C(U_2)$.
%
%			Our next results, Theorem \ref{thm:mainA} below, implies that  in this case every Hessian isometry is as in the examples above, that is,  locally, in a neighbourhood of  almost every point,  		it is linear or   a composition of a  linear map and  a Legendre transformation.

\begin{theorem} \label{thm:mainA} Let $F_1$   be a
 Minkowski norm  on $\mathbb{R}^n $  which  is   invariant for the standard block diagonal $SO(k)\times SO(n-k)$-action, with $n\geq 3$ and $1\leq k\leq n-1$.
Assume $C(U_1)$ is an $SO(k)\times SO(n-k)$-invariant connected conic open subset of $\mathbb{R}^n\backslash\{0\}$, such that
 every $y\in C(U_1)$ satisfies
\begin{equation}\label{eq:notzero}
g_1(v',v'')\neq 0,\quad\mbox{for some }v'\in V'\mbox{ and } v''\in V''.
\end{equation}
Here $g_1=g_1(\cdot,\cdot)$ is the Hessian metric of $F_1$, and
$\mathbb{R}^n=V'\oplus V''$ is an $SO(k)\times SO(n-k)$-invariant
decomposition with $\dim V'=k$ and $\dim V''=n-k$.

Then, for any $SO(k)\times SO(n-k)$-invariant Minkowski norm $F_2$, and any local Hessian isometry $\Phi$
from $F_1$ to $F_2$ which is defined on $C(U_1)$ and maps orbits to orbits, $\Phi$ either  coincides with the restriction of  a linear isometry, or it coincides with the restriction of  the composition of   the $F_1$-Legendre transformation  and a linear isometry.
\end{theorem}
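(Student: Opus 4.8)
The plan is to pass to coordinates adapted to the $SO(k)\times SO(n-k)$-symmetry, use that $\Phi$ descends to the two-dimensional orbit space in order to reduce it to a ``diagonal'' map, and then pin it down by an explicit computation. Write $\mathbb{R}^n=V'\oplus V''$, $y=(y',y'')$, $u=|y'|$, $v=|y''|$. By invariance $E_i=\tfrac12F_i^2$ has the form $\mathcal{E}_i(u,v)$ with $\mathcal{E}_i$ positively $2$-homogeneous in $(u,v)$. A direct computation of ${\rm d}^2\mathcal{E}_i$ in the ambient coordinates shows that $T(\mathbb{R}^n\setminus\{0\})$ splits $g_i$-orthogonally into the distribution $\mathcal{H}=\operatorname{span}(\partial_u,\partial_v)$ and the tangent directions of the orbit spheres $\{|y'|=u\}\subset V'$ and $\{|y''|=v\}\subset V''$, and that
\[
g_i=q_i\ \oplus\ u(\mathcal{E}_i)_u\,g_{S^{k-1}}\ \oplus\ v(\mathcal{E}_i)_v\,g_{S^{n-k-1}},
\]
where $q_i$, the restriction of $g_i$ to $\mathcal{H}$, equals the two-variable Hessian $(\mathcal{E}_i)_{uu}{\rm d}u^2+2(\mathcal{E}_i)_{uv}{\rm d}u\,{\rm d}v+(\mathcal{E}_i)_{vv}{\rm d}v^2$; in particular each $q_i$ is flat, being the Hessian metric of a $2$-homogeneous function of two variables. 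Note that \eqref{eq:notzero} is equivalent to the conjunction ``$C(U_1)\subset\{u>0,\ v>0\}$ and $(\mathcal{E}_1)_{uv}$ is nowhere zero on $C(U_1)$''.

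Since $\Phi$ maps $K:=SO(k)\times SO(n-k)$-orbits to $K$-orbits and is a positively $1$-homogeneous isometry with $F_2\circ\Phi=F_1$ (using $F_i(y)^2=g_i(y,y)$ and $1$-homogeneity), it descends to a $1$-homogeneous diffeomorphism $\bar\Phi(u,v)=(\tilde u,\tilde v)$ of the relevant subregion of the $(u,v)$-quarter-plane, with $\mathcal{E}_2\circ\bar\Phi=\mathcal{E}_1$; preserving the orbit foliation and being an isometry, it carries $\mathcal{H}$ to $\mathcal{H}$, so $\bar\Phi^*q_2=q_1$. Restricting $\Phi$ to one orbit gives an isometry between products (a single factor if $k=1$) of round spheres of radii $\sqrt{u(\mathcal{E}_1)_u}$, $\sqrt{v(\mathcal{E}_1)_v}$ and the corresponding radii at $(\tilde u,\tilde v)$; as $1\le k\le n/2$ we may assume $\Phi$ respects the two factors — composing with the linear block-interchange if $k=n/2$, which is legitimate since for a genuine Minkowski norm the two radii differ somewhere — and de Rham rigidity then forces
\[
u(\mathcal{E}_1)_u=\tilde u(\mathcal{E}_2)_{\tilde u},\qquad v(\mathcal{E}_1)_v=\tilde v(\mathcal{E}_2)_{\tilde v}\qquad(\star)
\]
and shows $\Phi(y',y'')=\bigl(\tfrac{\tilde u}{u}A(u,v)\,y',\ \tfrac{\tilde v}{v}B(u,v)\,y''\bigr)$ for smooth $A(u,v)\in O(k)$, $B(u,v)\in O(n-k)$. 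Imposing the vanishing of the horizontal–vertical cross terms of $g_i$ forces $A^{T}\partial_uA=A^{T}\partial_vA=0$ (and likewise for $B$), whence $A,B$ are constant on the connected set $C(U_1)$; composing $\Phi$ with the linear isometry $\operatorname{diag}(A^{-1},B^{-1})$ we are reduced to $\Phi(y',y'')=(\lambda(u,v)y',\ \mu(u,v)y'')$ with $\lambda,\mu>0$ smooth and $0$-homogeneous, still obeying $\mathcal{E}_2(\lambda u,\mu v)=\mathcal{E}_1(u,v)$, $\bar\Phi^*q_2=q_1$, and $(\star)$ — which now reads $(\mathcal{E}_1)_u=\lambda(\mathcal{E}_2)_u(\lambda u,\mu v)$, $(\mathcal{E}_1)_v=\mu(\mathcal{E}_2)_v(\lambda u,\mu v)$.

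For the endgame, put $a=(\mathcal{E}_1)_{uu}$, $b=(\mathcal{E}_1)_{uv}$, $c=(\mathcal{E}_1)_{vv}$, $p=(\mathcal{E}_1)_u$, $q=(\mathcal{E}_1)_v$ (so $ua+vb=p$, $ub+vc=q$, $p,q>0$, $ac-b^2>0$, and $b\neq0$ on $C(U_1)$ by \eqref{eq:notzero}), and $S=\lambda_v/\lambda$, $T=\mu_u/\mu$. Using $(\star)$ to rewrite the first and second derivatives of $\mathcal{E}_2$ at $(\tilde u,\tilde v)$ in terms of $\mathcal{E}_1,\lambda,\mu$, expanding the three coefficient equations in $\bar\Phi^*q_2=q_1$, and reducing with the Euler relations and the $0$-homogeneity of $\lambda,\mu$ (which give $u\lambda_u+v\lambda_v=0$, etc.), one first obtains $pS=qT$ and then, after substitution, the pointwise identity $S(pS-b)=0$. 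Hence at each point of $C(U_1)$ either $S=0$ — then $T=0$, and $0$-homogeneity makes $\lambda,\mu$ constant, so $\Phi$ is the linear isometry $\operatorname{diag}(\lambda,\mu)$ — or $S=b/p=(\mathcal{E}_1)_{uv}/(\mathcal{E}_1)_u$ — then $T=b/q$, so $\partial_v\log\lambda=\partial_v\log(\mathcal{E}_1)_u$ and $\partial_u\log\mu=\partial_u\log(\mathcal{E}_1)_v$, giving $\lambda=c_1(u)(\mathcal{E}_1)_u$, $\mu=c_2(v)(\mathcal{E}_1)_v$, and $0$-homogeneity forces $c_1(u)=c/u$, $c_2(v)=c'/v$, so $\Phi(y',y'')=\bigl(c\,\tfrac{(\mathcal{E}_1)_u}{u}y',\ c'\,\tfrac{(\mathcal{E}_1)_v}{v}y''\bigr)$, i.e. the $F_1$-Legendre transformation \eqref{LT-original} post-composed with $\operatorname{diag}(c,c')$. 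One checks via the Euler relations that both branches are genuinely consistent with all three coefficient equations. Since $b\neq0$ throughout the connected set $C(U_1)$, the values $S=0$ and $S=b/p$ are everywhere distinct, so by continuity a single branch holds on all of $C(U_1)$; undoing the (linear) reductions yields the asserted dichotomy. Equivalently, this last part is a two-dimensional statement: after the reduction $\Phi$ preserves the plane $\Sigma=\mathbb{R}e_1\oplus\mathbb{R}e_{k+1}$ and restricts there to a Hessian isometry of the two-dimensional norms $F_i|_\Sigma$ of the special diagonal form $(s,t)\mapsto(\lambda s,\mu t)$, which the above computation classifies.

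The decisive use of \eqref{eq:notzero} is precisely at the end: without $(\mathcal{E}_1)_{uv}\neq0$ one could glue the two branches across the Euclidean locus $\{(\mathcal{E}_1)_{uv}=0\}$, which is exactly what the isometry $\tilde\Phi$ of Section \ref{subsection-1-1} does. The step I expect to be the main obstacle is the descent in the second paragraph — establishing the splitting-preserving property, running the de Rham rigidity on orbits, and in particular proving that the orthogonal parts $A(u,v),B(u,v)$ are constant — together with the degenerate bookkeeping it entails ($k=1$, where $O(1)$ is discrete and the $S^{k-1}$-factor collapses, and $k=n/2$, where the two sphere factors may a priori be interchanged); by contrast the final computation, once the structural identities of the first paragraph are in place, is routine.
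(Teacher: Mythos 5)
Your outline follows essentially the same strategy as the paper, only written in the orbit-space chart $(u,v)$ instead of spherical coordinates: your descent to a product of round spheres with constant orthogonal blocks is the paper's Lemmas \ref{spherical-presenting-hessian-isometry-1}--\ref{spherical-presenting-hessian-isometry-2} and Theorem \ref{thm-decomposition}; your radius-matching $(\star)$ is \eqref{eq:t2}; the horizontal condition $\bar\Phi^*q_2=q_1$ plays the role of \eqref{eq:t1}; and your pointwise identity $S(pS-b)=0$ is the paper's quadratic \eqref{eq:square}, whose two roots \eqref{two-solutions} are exactly the linear and the Legendre models, separated by $b=(\mathcal{E}_1)_{uv}\neq0$ and connectedness just as in the paper's final gluing step. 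For $2\le k\le n/2$, where $F_1$ is automatically $O(k)\times O(n-k)$-invariant, so that $u,v>0$ and $p=(\mathcal{E}_1)_u>0$, $q=(\mathcal{E}_1)_v>0$ on the relevant region, your plan is sound (the two model families do satisfy the identities you assert, so the elimination, which you only sketch, is believable).

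The genuine gap is the case $k=1$, which Theorem \ref{thm:mainA} includes and which is the case needed for the Randers/$(\alpha,\beta)$ applications. There $F_1$ is only $SO(n-1)$-invariant, so $E_1$ is a function of the \emph{signed} coordinate $x_1$ and $|y''|$, not of $|y'|$, and your claimed equivalence ``\eqref{eq:notzero} $\Leftrightarrow$ $u>0,\ v>0$ and $(\mathcal{E}_1)_{uv}\neq0$'' is false: \eqref{eq:notzero} only forces $|y''|>0$, and points with $x_1=0$ or with $(\mathcal{E}_1)_u=0$ (the paper's $t_0=\pi/2$ and $t_0=t'$) may well lie in $C(U_1)$. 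At these loci your reduction breaks: the diagonal normal form $(\lambda u,\mu v)$ with smooth positive $\lambda$ does not exist (the Legendre model sends $\{x_1=0\}$ to points with nonzero first coordinate, so $\tilde u$ is not a multiple of $u$); the branch value $b/p$ blows up where $p=0$, and there your identity $S(pS-b)=0$ degenerates to $-bS=0$, wrongly excluding the Legendre branch; and the continuity argument separating the two branches no longer applies across these hypersurfaces, so a priori $\Phi$ could be linear on one side and Legendre on the other. This is precisely why the paper needs the separate argument in Lemma \ref{thm:mainA-local} for $t_0\in\{\pi/2,t'\}$ --- the fundamental-tensor computation (\ref{010})--(\ref{012}) producing $a_{12}=0$, contradicting (\ref{019}) --- to forbid such a switch. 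You flagged $k=1$ as ``degenerate bookkeeping'', but it is a missing argument rather than bookkeeping: without an analogue of that step, the asserted dichotomy on all of $C(U_1)$ is not established for $k=1$.
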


Let us emphasize that near  the points such that \eqref{eq:notzero} holds the Minkowski norm $F$ is not Euclidean so the $F_1$-Legendre transformation is not linear. In particular, $\Phi$ can not be  simultaneously  linear and the  composition of the   the $F_1$-Legendre transformation  and a linear isometry.

The condition (\ref{eq:notzero}) in Theorem \ref{thm:mainA}
characterizes one class of generic points on $S_{F_1}$ where $S_{F_1}$ does not touch any $O(k)\times O(n-k)$-invariant ellipsoid with an order bigger than one.  Of course, (\ref{eq:notzero}) is an open condition. But still the set of the points such that (\ref{eq:notzero})  is not fulfilled (for all $v'$ and $v''$) may contain nonempty open subset.  We discuss  such open domains in the following  Theorem:

\begin{theorem}\label{thm:mainB}
Let $F_1$  be a
 Minkowski norm on $\mathbb{R}^n $  which is   invariant for the standard block diagonal $SO(k)\times SO(n-k)$-action, with $n\geq 3$ and $1\leq k\leq n-1$.  Assume $C(U_1)$ is an $SO(k)\times SO(n-k)$-invariant connected conic open subset of $(\mathbb{R}^n\backslash\{0\},g_1)$ such that  at every $y\in C(U_1)$
\begin{equation}\label{eq:zero}
g_1(v',v'')= 0,\quad\mbox{for all }v'\in V'\mbox{ and } v''\in V''.
\end{equation}
Here $g_1=g_1(\cdot,\cdot)$ is the Hessian metric of $F_1$, and
$\mathbb{R}^n=V'\oplus V''$ is an $SO(k)\times SO(n-
k)$-invariant
decomposition with $\dim V'=k$ and $\dim V''=n-k$.

Then the restriction of $F_1$ to $C(U_1)$ is Euclidean.
Moreover,  for any $SO(k)\times SO(n-k)$-invariant Minkowski norm $F_2$, and any local Hessian isometry $\Phi$
from $F_1$ to $F_2$ which is defined on $C(U_1)$ and maps orbits to orbits, we have that $\Phi$ coincides with the restriction of a linear isometry and that the restriction of $F_2$ to $C(U_2)=\Phi(C(U_1))$ is Euclidean.
\end{theorem}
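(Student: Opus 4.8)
The plan is to first use only the symmetry and $2$-homogeneity to show that $F_1$ is forced to be Euclidean on $C(U_1)$, and then to analyze the orbit-preserving Hessian isometry $\Phi$ through the data it induces on the (two-dimensional) orbit space and on the product-of-spheres orbits.

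\emph{Step 1: $F_1|_{C(U_1)}$ is Euclidean.} Since $F_1$ is $SO(k)\times SO(n-k)$-invariant, on the open dense set where $r_1:=|y'|$ and $r_2:=|y''|$ are positive one writes $E_1=\phi(r_1,r_2)$ with $\phi$ positively $2$-homogeneous (when $k=1$ the group $SO(1)$ is trivial and one uses the signed coordinate $x_1$ in place of $r_1$). A direct computation of ${\rm d}^2E_1$ shows that the block of $g_1$ between $V'$ and $V''$ is, at a point with all coordinates nonzero, a nonzero multiple of $\phi_{r_1r_2}$; hence \eqref{eq:zero} is equivalent to $\phi_{r_1r_2}\equiv 0$ on the connected image $\hat U_1$ of $C(U_1)$ in the quotient (one checks that removing $\{r_1=0\}$ and $\{r_2=0\}$ leaves $\hat U_1$ connected; for $k=1$ a short extra argument using smoothness of $E_1$ handles the locus $\{x_1=0\}$, where the relevant quadratic matches $C^2$-smoothly). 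Writing $\phi(r_1,r_2)=r_1^2\tilde\phi(r_2/r_1)$ gives $\phi_{r_1r_2}=\tilde\phi'(t)-t\tilde\phi''(t)$ with $t=r_2/r_1$, so $\phi_{r_1r_2}\equiv 0$ forces $\tilde\phi'(t)/t$ to be constant on the connected $t$-interval, i.e. $\tilde\phi$ quadratic; positive definiteness of ${\rm d}^2E_1$ then yields $E_1=\tfrac{c_1}{2}r_1^2+\tfrac{c_2}{2}r_2^2$ on $C(U_1)$ with $c_1,c_2>0$. Thus $F_1|_{C(U_1)}$ is Euclidean, $g_1$ is flat there, and $g_1=c_1\,dr_1^2+c_1r_1^2g_{S^{k-1}}+c_2\,dr_2^2+c_2r_2^2g_{S^{n-k-1}}$.

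\emph{Step 2: the data induced by $\Phi$.} Since $\Phi$ is an isometry, $g_2$ is flat on $C(U_2)$; write $E_2=\psi(s_1,s_2)$. As $\Phi$ is $1$-homogeneous and maps orbits to orbits, it respects the radial foliation; since the radial directions are $g$-orthogonal to the orbits (for $g_2$ this follows from the block form of ${\rm d}^2\psi$), $\Phi$ descends to a $1$-homogeneous local isometry $\bar\Phi:(\hat U_1,\,c_1\,dr_1^2+c_2\,dr_2^2)\to(\hat U_2,\,{\rm d}^2\psi)$, and $F_2\circ\bar\Phi=F_1$ (the latter from $g(\xi,\xi)=2E$ for the Euler field $\xi$ and $d\Phi(\xi)=\xi$); in particular $\bar\Phi$ is a local Hessian isometry between the two-dimensional Euclidean norm $\sqrt{c_1r_1^2+c_2r_2^2}$ and the two-dimensional norm $\sqrt{2\psi}$. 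Moreover, on each orbit $\{r_1=a,\,r_2=b\}$ the map $\Phi$ restricts to an isometry
\begin{equation*}
\big(S^{k-1}\times S^{n-k-1},\,c_1a^2g_{S^{k-1}}+c_2b^2g_{S^{n-k-1}}\big)\ \longrightarrow\ \big(S^{k-1}\times S^{n-k-1},\,s_1\psi_{s_1}g_{S^{k-1}}+s_2\psi_{s_2}g_{S^{n-k-1}}\big),
\end{equation*}
the right-hand side evaluated at $\bar\Phi(a,b)$. By rigidity of isometries of Riemannian products of round spheres (the two spherical factors are generically nonisometric, and connectedness of $\hat U_1$ propagates the conclusion; when $k=n/2$ the ``swap'' alternative is entirely analogous and yields the same final conclusion) this isometry splits as a product of isometries of the factors, forcing
\begin{equation*}
c_1a^2=s_1\psi_{s_1}\big|_{\bar\Phi(a,b)},\qquad c_2b^2=s_2\psi_{s_2}\big|_{\bar\Phi(a,b)}.\tag{$\star$}
\end{equation*}
(For $k=1$ only the second equation comes from the single spherical factor, and the first then follows from Euler's identity $s_1\psi_{s_1}+s_2\psi_{s_2}=2\psi$ and $2\psi\circ\bar\Phi=F_1^2$.)

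\emph{Step 3 (the main computation): $F_2|_{C(U_2)}$ is Euclidean.} Introduce generalised polar coordinates $(t,\theta)$ on $\hat U_1$ with $c_1\,dr_1^2+c_2\,dr_2^2=dt^2+t^2d\theta^2$ and $F_1=t$; then $c_1a^2=t^2\cos^2\theta$, $c_2b^2=t^2\sin^2\theta$. By $1$-homogeneity of $\bar\Phi$, $s_i\circ\bar\Phi=t\,\sigma_i(\theta)$ and $\psi_{s_i}\circ\bar\Phi=t\,p_i(\theta)$, and $(\star)$ together with $\psi\circ\bar\Phi=\tfrac{t^2}{2}$ give $p_1=\cos^2\theta/\sigma_1$, $p_2=\sin^2\theta/\sigma_2$. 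Using the identity ${\rm d}^2\psi=d\psi_{s_1}\!\cdot ds_1+d\psi_{s_2}\!\cdot ds_2$ (symmetrised product of $1$-forms), the conditions $\bar\Phi^*({\rm d}^2\psi)=dt^2+t^2d\theta^2$ and $d(\psi\circ\bar\Phi)=t\,dt$ reduce, after discarding the automatically satisfied terms, to the scalar ODEs
\begin{equation*}
\cos^2\theta\,\ell_1'+\sin^2\theta\,\ell_2'=0,\qquad \sin 2\theta\,(\ell_2'-\ell_1')-\cos^2\theta\,(\ell_1')^2-\sin^2\theta\,(\ell_2')^2=1,\qquad \ell_i:=\ln\sigma_i.
\end{equation*}
Substituting the first into the second turns the latter into a perfect square (writing $\ell_2'=\cos^2\theta\cdot n$ one gets $(\tfrac12\sin 2\theta\cdot n-1)^2=0$), hence $\ell_1'=-\tan\theta$, $\ell_2'=\cot\theta$, i.e. $\sigma_1=C_1\cos\theta$, $\sigma_2=C_2\sin\theta$ with $C_1C_2\neq 0$. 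Therefore $s_i\circ\bar\Phi$ is linear in $(r_1,r_2)$, and on $\hat U_2=\bar\Phi(\hat U_1)$ one reads off $\psi_{s_1}=s_1/C_1^2$, $\psi_{s_2}=s_2/C_2^2$, so $\psi_{s_1s_2}\equiv 0$ there; integrating and using $2$-homogeneity, $\psi=\tfrac{1}{2C_1^2}s_1^2+\tfrac{1}{2C_2^2}s_2^2$ on $\hat U_2$. Thus $F_2|_{C(U_2)}$ is Euclidean.

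\emph{Step 4: conclusion.} By Steps 1 and 3, both $g_1$ and $g_2$ are, in the standard coordinates of $\mathbb R^n$, flat metrics with \emph{constant} coefficients; hence the (automatically $C^\infty$) isometry $\Phi:C(U_1)\to C(U_2)$ is the restriction of an affine map $x\mapsto Ax+v$, and positive $1$-homogeneity forces $v=0$, so $\Phi$ is the restriction of the linear isometry $A$. The main obstacle is Step 3: flatness of $g_2$ by itself does \emph{not} force $F_2$ to be Euclidean (cf. Remark~\ref{Rem:0040}), and in dimension two there is an abundance of nonlinear Hessian isometries, so the needed rigidity cannot come from the two-dimensional picture alone — it is the constraint $(\star)$, supplied by the spherical factors of the orbits, that makes the ODE system overdetermined and pins down $\bar\Phi$ and hence $F_2$. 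This is the situation complementary to Theorem~\ref{thm:mainA}, where it is \eqref{eq:notzero} rather than \eqref{eq:zero} that holds, so that the $F_1$-Legendre transformation is genuinely nonlinear; here, $F_1|_{C(U_1)}$ being Euclidean makes its Legendre transformation linear on $C(U_1)$, which is why only the linear alternative survives.
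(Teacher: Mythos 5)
Your argument is correct and establishes all three assertions, but it takes a route that differs in substance from the paper's. The paper obtains the first assertion from Lemma \ref{lemma-002} combined with the ODE already solved inside the proof of Lemma \ref{lem:final} (solution $f(t)=c_1+c_2\cos 2t$), and the remaining assertions by noting that under \eqref{eq:zero} the discriminant \eqref{discriminant} vanishes, so the two roots \eqref{008} and \eqref{009} of the quadratic \eqref{eq:square} coincide; one then solves $\theta(t)$ explicitly and recovers $h(\theta)$ from \eqref{eq:t2}. That proof is only a few lines, but it rests on the whole apparatus of Section \ref{section-3}: Theorem \ref{thm-decomposition}, the spherical presentations of Lemmas \ref{spherical-presenting-hessian-isometry-1} and \ref{spherical-presenting-hessian-isometry-2}, the reduction to $n=3$, and the linear/Legendre examples. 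You avoid all of that: you get the first assertion by integrating $\phi_{r_1r_2}\equiv 0$ directly against $2$-homogeneity, and for the isometry you pass to the two-dimensional orbit space, feed in the orbit-radius matching $(\star)$ --- which plays exactly the role of \eqref{eq:t2} and of the radius statements in the presentation lemmas --- and solve a first-order system for the components of the quotient map; your perfect square $\bigl(\tfrac12\sin 2\theta\, n-1\bigr)^2=0$ is the paper's vanishing discriminant in disguise. Your endgame (both metrics have constant coefficients, so the isometry between connected flat pieces is affine, hence linear by positive $1$-homogeneity) also delivers at once that $F_2$ is Euclidean on $C(U_2)$, which the paper leaves as ``solve $h(\theta)$ and see that the isometry is linear''. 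The trade-off: the paper's proof is shorter given machinery it needs anyway for Theorem \ref{thm:mainA}, while yours is self-contained, works uniformly in all dimensions without reducing to $n=3$, and makes the mechanism transparent. The points you gloss --- matching constants across $\{r_1=0\}$ and $\{r_2=0\}$, the $k=1$ locus $\{x_1=0\}$ (i.e.\ $\theta=\pi/2$, where $\sigma_1$ vanishes and $\ell_1=\ln\sigma_1$ must be treated on each side and matched by smoothness of $\bar\Phi$), the $S^1\times S^1$ case of the product-of-spheres rigidity, and the swap case $n=2k$ --- are routine and no coarser than the paper's own ``for almost all relevant values of $t$''.
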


The example discussed in Remark \ref{Rem:0040} shows that the condition that $\Phi$ maps orbits to orbits is necessary for Theorem
\ref{thm:mainB}.

Theorem \ref{thm:mainA} and Theorem \ref{thm:mainB}
provide the precise and explicit description for a local (or global) Hessian isometry $\Phi$ almost everywhere in its domain. We can
find two $SO(k)\times SO(n-k)$-invariant conic open subsets $ C({U}')$ and $ C({U}'')$ in $\mathbb{R}^n\backslash\{0\}$, such that $ C({U}')\cup C({U}'')$ is dense in the domain of $\Phi$,
 (\ref{eq:notzero}) is satisfied on $ C({U}')$, and (\ref{eq:zero}) is satisfied on $ C({U}'')$. Then by these two theorems, when restricted to each connected component $C(U'_1)$ of $C(U')$, $\Phi$ is a linear isometry or the composition of the Legendre transformation of $F_1$  which we denote by $\Psi$ and a linear isometry. Restricted to each connected component of $C(U'')$, $\Phi$ is a linear isometry. This implies that every such  $\Phi$ can be constructed   along the lines
 discussed at the end of  Section \ref{subsection-1-1}.

\label{section_results}

\subsection{Applications in convex geometry: a special case of  Laugwitz Conjecture. }

It was conjectured by   D. Laugwitz \cite[page 70]{La1967} that Theorem \ref{thm:Brickell} remains true without the assumption of absolute homogeneity:
\begin{conjecture}[Laugwitz Conjecture]
If the Hessian metric $g=\tfrac12{\rm d}^2F^2$ for a
Minkowski norm $F$ is flat on $\mathbb{R}^n\backslash\{0\}$ with $n\geq3$, then $F$ is Euclidean.
\end{conjecture}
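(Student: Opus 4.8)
The plan is to first turn the flatness hypothesis into an intrinsic statement about the indicatrix and then into a statement about a Hessian isometry to the Euclidean norm. From the cone identity $g={\rm d}F^2+F^2 g_{|S_F}$ the Hessian metric is the metric cone over $(S_F,g_{|S_F})$, and since $n\geq 3$ the base has dimension $n-1\geq 2$. A metric cone ${\rm d}r^2+r^2 h$ is flat exactly when its base has constant sectional curvature $+1$, so flatness of $g$ forces $g_{|S_F}$ to have constant curvature $+1$; as $S_F$ is diffeomorphic to the simply connected sphere $S^{n-1}$, the Killing--Hopf theorem identifies $(S_F,g_{|S_F})$ isometrically with the round unit sphere. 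The metric cone over the round sphere is exactly the punctured Euclidean space, so this identification extends to a global isometry $\Phi\colon(\mathbb{R}^n\setminus\{0\},g)\to(\mathbb{R}^n\setminus\{0\},\text{Euclidean})$ which, respecting the cone structure, maps radial rays to radial rays, is positively $1$-homogeneous, and satisfies $F=|\Phi(\cdot)|$. Thus $\Phi$ is a Hessian isometry from $F$ to the Euclidean norm, and the conjecture becomes equivalent to the rigidity claim that such an isometry must be linear, for then $F=|\Phi(\cdot)|$ is a Euclidean norm.

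Next I would phrase everything through the Cartan tensor $C_{ijk}=\tfrac12\partial_k g_{ij}=\tfrac14\partial^3_{ijk}(F^2)$, whose identical vanishing is equivalent to $F$ being Euclidean. The key computation is that in the flat affine coordinates $x_i$ the Levi-Civita connection of $g$ has Christoffel symbols $\Gamma^k_{ij}=C^k_{ij}$, because $\partial_i g_{jl}=\partial^3_{ijl}E$ is totally symmetric. Writing $\nabla=D+C$ with $D$ the flat coordinate connection and using that the Codazzi term vanishes (since $\partial_m C_{ijk}=\tfrac12\partial^4_{ijkm}E$ is totally symmetric, so $\partial_m C_{ijk}=\partial_i C_{mjk}$), the curvature collapses to $R^\nabla(X,Y)=[C_X,C_Y]$. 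Hence flatness is equivalent to the pointwise algebraic condition that the $g$-self-adjoint operators $C_X$ commute. Diagonalizing this commuting family in a $g$-orthonormal frame $e_1,\dots,e_n$ and invoking the total symmetry of $C$, one obtains $C=\sum_a\mu_a(e^a)^{\otimes 3}$, a diagonal cubic form at every point; moreover the homogeneity identity $C(\cdot,\cdot,y)=0$ (Euler applied to the $0$-homogeneous $g_{ij}$) forces the radial direction $y$ to be $g$-orthogonal to every eigendirection $e_a$ with $\mu_a\neq 0$. So flatness reduces the problem to showing that the diagonal cubic form tangent to the round indicatrix vanishes identically.

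The hard part will be precisely this last step. The commuting condition is only pointwise and algebraic, and it genuinely admits nonzero solutions (any simultaneously diagonalizable family of symmetric tensors), so no amount of pointwise information can force $C\equiv 0$; the global structure still unused must be brought in. That structure is: $C$ is $(-1)$-homogeneous, it is a genuine third derivative (so the diagonalized eigendata are constrained by the Codazzi integrability $\partial_m C_{ijk}=\partial_i C_{mjk}$), the induced metric on $S_F$ is the closed round sphere, and $g$ is positive definite. The strategy would be to show that the simultaneous eigendistributions integrate into a rigid decomposition and that the variation of the eigenvalues $\mu_a$ along the radial geodesics, controlled by $\nabla=D+C$ together with the $(-1)$-homogeneity, is incompatible with $S_F$ being a closed round sphere unless all $\mu_a$ vanish. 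This is exactly where Brickell's theorem uses absolute homogeneity (central symmetry of $S_F$) to pin the eigenframe down, and where the present paper instead uses the $SO(k)\times SO(n-k)$-symmetry to reduce the integrability system to an ordinary differential equation that can be solved explicitly; obtaining this rigidity with no symmetry assumption is the genuine obstacle and the core difficulty of the conjecture.
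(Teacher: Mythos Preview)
The statement you are addressing is presented in the paper as an \emph{open conjecture}, not as a theorem; the paper does not prove it in full generality. What the paper actually establishes is the special case (Corollary~1.7) in which $F$ carries a linear $SO(k)\times SO(n-k)$-symmetry, deduced from Theorem~1.2: if $g$ is flat then the connected Hessian isometry group has dimension $\tfrac{n(n-1)}{2}$, which strictly exceeds $\dim SO(k)\times SO(n-k)$, so by Theorem~1.2 the norm must be Euclidean.

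Your first two paragraphs are a correct and clean reduction: flatness of the cone $g={\rm d}F^2+F^2 g_{|S_F}$ forces $(S_F,g_{|S_F})$ to be the round sphere, hence produces a Hessian isometry $\Phi$ from $F$ to a Euclidean norm, and the conjecture becomes the linearity of $\Phi$. Your curvature identity $R^\nabla(X,Y)=[C_X,C_Y]$ is exactly the paper's Fact~2.3, and the simultaneous diagonalisation of the commuting family $\{C_X\}$ is a natural next step.

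Your final paragraph correctly locates the genuine gap: the pointwise diagonal form $C=\sum_a\mu_a(e^a)^{\otimes 3}$ together with $C(y,\cdot,\cdot)=0$ does not by itself force $C\equiv 0$; one needs a global argument on the closed round indicatrix. You rightly note that Brickell closes this with absolute homogeneity and that the present paper closes it with the $SO(k)\times SO(n-k)$-symmetry by reducing to an explicit ODE (Lemma~2.4 and the computation leading to \eqref{eq:cur}). Without either hypothesis the step remains open, which is precisely why Laugwitz's statement is still a conjecture. So your proposal is not a proof of the conjecture but an accurate diagnosis of its difficulty, consistent with the paper's own framing.
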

For a discussion from the viewpoint of  Finsler geometry  see e.g.
	\cite[Remark (b) on page 416]{BCS}. Using Theorem \ref{thm:main1}, we prove the following special case of Laugwitz Conjecture.
	
	\begin{corollary} \label{cor:1}
	Laugwitz conjecture is true for the class of Minkowski norms which are invariant with respect to the standard block diagonal
$SO(n-1)$-action.
	\end{corollary}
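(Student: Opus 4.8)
The plan is to deduce Corollary \ref{cor:1} from Theorem \ref{thm:main1} applied with $k=1$ --- here the ``standard block diagonal $SO(n-1)$-action'' is precisely the standard block diagonal $SO(k)\times SO(n-k)$-action with $k=1$, since $SO(1)$ is trivial --- by comparing the connected isometry group of $g$ computed in two different ways.

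Assume, for contradiction, that $F$ is $SO(n-1)$-invariant and its Hessian metric $g=\tfrac12{\rm d}^2F^2$ is flat on $\mathbb{R}^n\backslash\{0\}$, but $F$ is not Euclidean. On one hand, Theorem \ref{thm:main1} gives that the connected isometry group $G_0$ of $g$, together with its action, equals $SO(n-1)$; in particular $\dim G_0=\dim SO(n-1)$. On the other hand, I would compute $G_0$ directly from flatness: by the third bullet point in Section \ref{subsection-1-1}, $g={\rm d}r^2+r^2h$ is the Riemannian cone over $(S_F,h)$ with $r=F$ and $h=g|_{S_F}$; the indicatrix $S_F=\partial\{F\le 1\}$ is a smooth compact hypersurface homeomorphic to $S^{n-1}$, so $(S_F,h)$ is a closed Riemannian $(n-1)$-manifold, and the standard warped-product curvature identity shows that flatness of ${\rm d}r^2+r^2h$ forces $h$ to have constant sectional curvature $1$. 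Since $n\ge 3$, the sphere $S^{n-1}$ is simply connected, so $(S_F,h)$ is isometric to the unit round sphere, whose connected isometry group is $SO(n)$.

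Transferring this back to $g$, the two facts recalled at the end of Section \ref{subsection-1-1} --- every Hessian self-isometry of $F$ is positively $1$-homogeneous and maps $S_F$ to $S_F$, and conversely a positively $1$-homogeneous self-map of $\mathbb{R}^n\backslash\{0\}$ preserving $S_F$ is a Hessian self-isometry of $F$ exactly when its restriction to $S_F$ preserves $h$ --- show that restriction to $S_F$ is a (Lie group) isomorphism from the isometry group of $g$ onto that of $(S_F,h)$: it is injective because a $1$-homogeneous map is determined by its values on $S_F$, and surjective by radial extension. Hence $G_0\cong\mathrm{Isom}_0(S^{n-1})=SO(n)$, so $\dim G_0=\dim SO(n-1)+(n-1)>\dim SO(n-1)$, contradicting the first computation. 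Therefore $F$ must be Euclidean, which is Laugwitz's conjecture in this class. Given Theorem \ref{thm:main1}, this is a short deduction; the only mildly technical point is the identification of the isometry group of a flat Hessian metric with $SO(n)$ via its cone structure, which is entirely routine, so I foresee no real obstacle here.
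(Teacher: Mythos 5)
Your proposal is correct and follows essentially the same route as the paper: both deduce the corollary from Theorem \ref{thm:main1} with $k=1$ by noting that flatness of the cone metric forces $(S_F,g_{|S_F})$ to be the round unit sphere, so $G_0\cong SO(n)$, whose dimension exceeds that of $SO(n-1)$, which is incompatible with the ``moreover'' part of Theorem \ref{thm:main1} unless $F$ is Euclidean. You merely spell out explicitly the flat-cone and restriction-to-indicatrix steps that the paper leaves implicit via the bullet points of Section \ref{subsection-1-1}.
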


Indeed,  if the Hessian metric of $F$ is flat on $\mathbb{R}^n\backslash\{0\}$, then the identity component $G_0$ of all Hessian isometries for $F$ has the
dimension $\tfrac{n(n-1)}{2}$. As a Lie group, $G_0$ is isomorphic to $SO(n)$, but  its action on $\mathbb{R}^n$ is linear iff $F$ is Euclidean. Since we have assumed here that $F$ is invariant with respect to the standard block diagonal action of $SO(n-1)=SO(1)\times SO(n-1)$ with $n\geq 3$, and obviously $G_0=SO(n)$ has a bigger dimension than $SO(n-1)$,
the
last statement in Theorem \ref{thm:main1} for $k=1$ or $k=n-1$ guarantees that the $G_0$-action is linear in this case.

By similar argument, it follows from
Theorem \ref{thm:main1} that the Laugwitz conjecture is true
for Minkowski norms which are invariant for the standard block diagonal $SO(k)\times SO(n-k)$-action with $2\leq k\leq n-2$.
Notice that it has already been covered by Theorem \ref{thm:Brickell}, because the norms are absolutely homogeneous in this case.
\label{subsection-1-3}
 \subsection{Application in Finsler geometry: a special case of
Landsberg Unicorn Conjecture} \label{sec:intro2}

Historically   Finsler geometry    appeared as an attempt of   generalising    results and methods
  from Riemannian geometry to  the optimal transport and calculus of variation, see e.g. \cite{ Berwald1941, Bl1906, Cartan, Ha1903,Landsberg1908, Rund}.
		Generalisation of Riemannian results to the Finslerian setup is   still  one
  of the most popular research directions    in Finsler geometry, and one of the main sources for interesting problems and
	methods.
			
			The analogs of Riemannian objects in  Finsler geometry  are in many cases  more complicated than Riemannian originals \cite{Sh2001}.  The connection (actually, there are three main natural candidates for the generalisation of the  Levi-Civita connection) is generically not linear. It results in the nonlinearity for the {Berwald parallel transport}, which will be addressed later. The  analogs of
		the  Riemannian 	curvatures  are also  more complicated  and in fact there exist  two main   different types of the curvature: the Riemannian type and the non-Riemannian type. For example, the \emph{flag curvature}, which generalizes the sectional curvature in Riemannian geometry, is of the Riemannian type.  On the other hand, the {\it Landsberg curvature} is of the non-Riemannian type, because it vanishes identically for Riemannian metrics and  has no analogs in  Riemannian geometry.

It is known  that the  Landsberg curvature vanishes identically for a relatively small   class of Finsler metrics  called {\it Berwald metrics}, which are characterized by the property that the Berwald parallel transport is linear, see e.g. \cite[Proposition 4.3.2]{CS2005} or \cite[\S 10]{BCS}. Berwald metrics are completely understood, see e.g. \cite[Theorem 4.3.4]{CS2005}, \cite[\S\S 8,9]{MT2012} or \cite{Sz1981}.

A non-Berwald Finsler metric with vanishing Landsberg curvature is
called a {\it unicorn metric}. Many experts believe that smooth unicorn metrics do not exist. This statement  is called the  {\it Landsberg Unicorn Conjecture}.

 \begin{conjecture}[Landsberg Unicorn Conjecture]
 \label{Landsberg-Conjecture}
 A  Finsler metric with vanishing Landsberg curvature must be Berwald. \end{conjecture}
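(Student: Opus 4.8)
The plan is to attack the conjecture by translating both the hypothesis (vanishing Landsberg curvature) and the conclusion (Berwald) into statements about the Berwald parallel transport acting on the tangent Minkowski norms, and then to reduce the whole problem to the linearity question for Hessian isometries that is the subject of this paper. First I would recall the two standard characterisations (see e.g. \cite{CS2005,Sh2001,BCS}): for any Finsler metric the Berwald parallel transport $P_\gamma\colon T_pM\setminus\{0\}\to T_qM\setminus\{0\}$ along a curve $\gamma$ from $p$ to $q$ is a positively $1$-homogeneous diffeomorphism that preserves the Finsler norm, hence maps $S_{F_p}$ onto $S_{F_q}$; the metric is Berwald precisely when every such $P_\gamma$ is linear; and the metric is Landsberg precisely when every $P_\gamma$ in addition preserves the fundamental tensors, i.e. is a Hessian isometry from $(T_pM,F_p)$ to $(T_qM,F_q)$ intertwining $g_p=\tfrac12\,\mathrm{d}^2F_p^2$ and $g_q=\tfrac12\,\mathrm{d}^2F_q^2$. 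Thus the conjecture is equivalent to the assertion that, under the Landsberg hypothesis, these fundamental-tensor-preserving parallel transports are forced to be linear, which is exactly a statement about Hessian isometries.

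Next I would localise the problem to the connected isometry group. Parallel transport around a sufficiently small loop based at $p$ is $C^\infty$-close to the identity diffeomorphism of $T_pM\setminus\{0\}$, and therefore lies in the identity component $G_0$ of the group of Hessian self-isometries of $(T_pM,F_p)$; here one uses that this group is a Lie group, being the group of homogeneous isometries of the Riemannian cone metric $g_p$. Since the restricted holonomy group at $p$ is connected and generated by such small loops, it sits entirely inside $G_0$. If one can show that every element of $G_0$ is linear, then the restricted holonomy acts linearly on $T_pM$; combining this with the continuity of the one-parameter family $P_t$ along a geodesic (which starts at the identity) and a linear identification of neighbouring tangent spaces would propagate linearity to all parallel transports, which is precisely the Berwald condition.

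The decisive step is therefore the linearity of $G_0$, and this is exactly where the machinery of the present paper applies. When every tangent Minkowski norm $F_p$ carries a linear $SO(k)\times SO(n-k)$-symmetry with $n\ge 3$, Theorem \ref{thm:main1} asserts that $G_0$ consists only of linear maps; the holonomy is then linear, the reduction closes, and the metric is Berwald. In this symmetric class the conjecture thus follows directly from the translation above together with Theorem \ref{thm:main1}.

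The hard part of the fully general conjecture is precisely this linearity of $G_0$, and without a symmetry hypothesis the step genuinely fails: the constructions at the end of Section \ref{subsection-1-1} produce Minkowski norms whose Hessian metric admits nonlinear self-isometries, and while the Legendre-type isometries of Theorem \ref{thm:mainA} sit in a separate component, the cut-and-paste deformations can be connected to the identity and hence could a priori be realised as holonomy. The direction I would pursue to close the general case is to extract extra rigidity from the Berwald connection: the holonomy maps are not merely elements of $G_0$, they must also be symmetries of the holonomy-invariant tensor fields on $T_pM$ coming from the flag curvature and the higher Berwald-covariant derivatives of the Cartan tensor, and I would try to show that these curvature constraints cut the relevant connected subgroup of $G_0$ down to its linear part, thereby excluding the nonlinear self-isometries. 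I expect this coupling of the pointwise classification of $G_0$ with the global integrability and curvature constraints of the connection to be the main obstacle; the symmetry assumption of this paper circumvents it entirely by collapsing $G_0$ to linear maps at every point.
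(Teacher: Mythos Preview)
The statement in question is a conjecture, and the paper does not prove it in general; it proves only the special case in which every tangent Minkowski norm carries a linear $SO(k)\times SO(n-k)$-symmetry (Corollary~\ref{thm:main2}). Your proposal is honest about this and focuses on that symmetric case, so let me compare your argument there with the paper's.

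Your reduction of the Landsberg hypothesis to ``Berwald parallel transport is a Hessian isometry'' and of the Berwald condition to ``parallel transport is linear'' is correct and is exactly the starting point of the paper (Fact~\ref{fact:1}). The divergence is in the next step. You propose to use only Theorem~\ref{thm:main1}: the connected isometry group $G_0$ of each $(T_pM,g_p)$ is linear, the restricted holonomy at $p$ sits in $G_0$, hence is linear, and then you want to ``propagate'' this to linearity of all parallel transports $\tau_s\colon T_{c(0)}M\to T_{c(s)}M$. This last step is a genuine gap. The maps $\tau_s$ are between \emph{different} tangent spaces, so they are not elements of any $G_0(p)$; composing with an auxiliary linear identification produces a diffeomorphism of $T_{c(0)}M$ that is \emph{not} a Hessian isometry of $F_{c(0)}$ (it is an isometry from $g_{c(0)}$ to the pull-back of $g_{c(s)}$, a different metric), so $G_0$-linearity tells you nothing about it. Knowing that the restricted holonomy is linear is strictly weaker than the Berwald condition, and you give no argument bridging the two.

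The paper's proof of Corollary~\ref{thm:main2} closes exactly this gap, but not via holonomy. Theorem~\ref{thm:main1} is used only to separate the Riemannian case from the case $G_0=SO(k)\times SO(n-k)$; the real work is then done by Theorems~\ref{thm:mainA} and~\ref{thm:mainB}. Since $\tau_s$ conjugates $G_0(c(0))$ to $G_0(c(s))$, in the non-Riemannian case it maps $SO(k)\times SO(n-k)$-orbits to orbits, and the two classification theorems say that on each connected conic piece $\tau_s$ is either a linear map or the composition of the Legendre transformation with a linear map. One then runs a connectedness argument in the parameter $s\in[0,1]$: the sets $T_1=\{s:\tau_s\text{ is linear on the piece}\}$ and $T_2=\{s:\tau_s\text{ is Legendre}\circ\text{linear on the piece}\}$ are disjoint, closed, and cover $[0,1]$; since $\tau_0=\mathrm{id}\in T_1$, one gets $T_1=[0,1]$. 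Thus $\tau_1$ is linear on a dense open set, hence linear. The point is that Theorems~\ref{thm:mainA} and~\ref{thm:mainB} are essential: they provide the discrete dichotomy that makes the open--closed argument work directly for $\tau_s$, without ever needing to interpret $\tau_s$ as an element of a single isometry group. Your proposal skips these theorems, and without them the propagation step does not go through.
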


 The origin of this conjecture can be traced back to  \cite{Berwald1947} (or  even to  \cite{Landsberg1908}).  It is  definitely one of the   most popular  open problems  in Finsler geometry   and was explicitly asked in   e.g.   		\cite{AlvarezPaiva2006, Bao2007, BaoChernShen1997,Dodson2006, Ma1996, Shen2009a}.
Its proof was reported a few times in preprints and even published in reasonable journals, but  later crucial mistakes were found, see e.g. \cite{Matveev2009b}.

The definition of the Landsberg curvature  and the properties of Finsler metrics with vanishing Landsberg curvature can be found elsewhere, e.g. in  \cite[\S 2.1 and \S 4.4]{CS2005}. For our paper, we only need the following  known statement:

\begin{fact}[e.g. Proposition 4.4.1 of \cite{CS2005} or \cite{Kozma}] \label{fact:1}
If Landsberg curvature vanishes, then  the Berwald parallel transport is isometric with respect to the Hessian metric (corresponding to $E= \tfrac{1}{2} F^2$ in each tangent space).
\end{fact}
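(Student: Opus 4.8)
The plan is to work on the slit tangent bundle $TM\setminus\{0\}$ with the Berwald nonlinear connection (the Ehresmann connection induced by the geodesic spray) and its fibrewise linearisation, the Berwald connection. For a curve $\gamma\colon[0,1]\to M$, the Berwald parallel transport is the map $P_\gamma\colon T_{\gamma(0)}M\setminus\{0\}\to T_{\gamma(1)}M\setminus\{0\}$ obtained by horizontally lifting $\gamma$ in $TM\setminus\{0\}$; it is generally nonlinear, but it is classical that its differential at a point $v$ equals the linear parallel transport $P^{\mathrm B}_{\gamma,v}$ of the Berwald connection along that horizontal lift, computed with the (auto-parallel) reference field equal to the lift itself, so in particular $P^{\mathrm B}_{\gamma,v}(v)=P_\gamma(v)$. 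On the other hand, under the canonical identification $T_v(T_pM)\cong T_pM$ the Hessian metric $g=\tfrac12{\rm d}^2(F^2)$ on $T_pM\setminus\{0\}$ is precisely the family $v\mapsto g_v$ of fundamental tensors of the Minkowski norm $F$ on $T_pM$. Hence the assertion that $P_\gamma$ is an isometry between the Hessian metrics on $T_{\gamma(0)}M\setminus\{0\}$ and $T_{\gamma(1)}M\setminus\{0\}$ is equivalent to the statement that each linear map $P^{\mathrm B}_{\gamma,v}$ carries $g_v$ to $g_{P_\gamma(v)}$.

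Next I would invoke the basic identity relating the Berwald connection to the fundamental tensor: on the bundle $\pi^*TM\to TM\setminus\{0\}$ one has $g_{ij\,|\,k}=-2L_{ijk}$, where $|\,k$ is horizontal Berwald covariant differentiation and $L_{ijk}$ is the (totally symmetric) Landsberg tensor. Equivalently, the Berwald connection differs from the almost $g$-compatible Chern connection precisely by $L$, so its failure to be horizontally metric is measured by the Landsberg curvature. Assuming the Landsberg curvature vanishes, i.e. $L\equiv 0$, the fundamental tensor $g$ — viewed as a section of $\operatorname{Sym}^2(\pi^*T^*M)$ — is horizontally parallel. Restricting $g$ to the horizontal lift $t\mapsto(\gamma(t),v(t))$ of $\gamma$ with $v(0)=v$, and using that horizontal covariant differentiation reduces to ordinary covariant differentiation for the induced linear connection along a horizontal curve, we conclude that $g$ is parallel along that lift; hence $P^{\mathrm B}_{\gamma,v}$ intertwines $g_{v(0)}=g_v$ with $g_{v(1)}=g_{P_\gamma(v)}$. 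By the equivalence recorded above, $(P_\gamma^{*}g)_v=g_{P_\gamma(v)}(\,{\rm d}(P_\gamma)_v\cdot,{\rm d}(P_\gamma)_v\cdot\,)=g_{P_\gamma(v)}(P^{\mathrm B}_{\gamma,v}\cdot,P^{\mathrm B}_{\gamma,v}\cdot)=g_v$ for every $v$, which is the claimed isometry.

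The conceptual skeleton above is short; the genuine work is bookkeeping. First, one must fix normalisations so that $g_{ij\,|\,k}=-2L_{ijk}$ holds with the \emph{full}, non-contracted Landsberg tensor $L_{ijk}$, not merely its contraction $L_{ijk}y^k$ with the supporting direction: only the non-contracted version yields parallelism of $g$ along all horizontal curves and hence the isometry statement for \emph{arbitrary} curves $\gamma$ on $M$, the contracted version already sufficing along geodesics (which is the original observation of Landsberg). Second, one must justify that the Berwald parallel transport along a general curve is indeed the differential of the nonlinear parallel transport and that the reference field may be taken to be the horizontal lift itself; this is a short Euler-homogeneity computation, using $y^k\,\partial_{y^k}G^i_j=G^i_j$ for the spray coefficients $G^i$. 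I would accordingly organise the write-up as: (i) recall the nonlinear and Berwald connections and the transport $P_\gamma$; (ii) recall or rederive $g_{ij\,|\,k}=-2L_{ijk}$; (iii) assemble (i) and (ii) as above. Since all of this is standard, one may alternatively just cite Proposition 4.4.1 of \cite{CS2005} or \cite{Kozma}.
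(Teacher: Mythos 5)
Your argument is correct and is essentially the standard one underlying the sources the paper invokes: the paper gives no proof of this Fact at all, stating it as known and citing Proposition 4.4.1 of \cite{CS2005} and \cite{Kozma}, which is exactly the fallback you mention at the end. One small quibble with a side remark: the contraction $L_{ijk}y^k$ vanishes identically (by Euler's theorem, since $\partial^2 G^m/\partial y^i\partial y^j$ is $0$-homogeneous), so your ``contracted versus non-contracted'' caveat is vacuous as stated --- the genuine point, already encoded in your identity $g_{ij|k}=-2L_{ijk}$ for the Berwald connection, is that the spray-direction condition $L\equiv 0$ forces horizontal parallelism of the fundamental tensor in \emph{all} horizontal directions, which is what makes the transport isometric along arbitrary curves and not only along geodesics.
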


Recall that the {\it Berwald parallel transport} is a Finslerian  analog of the parallel transport in Riemannian geometry. For every smooth curve $c:[0,1]\to M$  on $(M,F)$, the  Berwald parallel transport along $c$ provides a smooth family of diffeomorphisms $\Phi_s:T_{c(0)}M\backslash\{0\}\to T_{c(s)}M\backslash\{0\}$.  Similarly to the Riemannian case, the mapping is defined via
 certain  system of ODEs along the curve $c$. Differently from the Riemannian case, these ODEs  are  not linear, so for a generic  Finsler metric the Berwald
 parallel transport is not linear as well.  In fact, as recalled above, it is linear if and only if the metric is Berwald.

In Section \ref{sec:prooflandsberg} we explain that Theorems \ref{thm:main1}, \ref{thm:mainA} and  \ref{thm:mainB} easily imply the following important special case of  Conjecture \ref{Landsberg-Conjecture}.

\begin{corollary} \label{thm:main2} Let $(M,F)$ be  a  Finsler
 manifold of dimension $n\ge 3$. Assume that
 for every point $p\in M$, there exist linear coordinates in  $T_pM$  such that
the restriction $F_{|T_pM}$ is invariant with respect to the standard block diagonal action of the group $SO(k)\times SO(n-k)$ with $1\leq k\leq n-1$.

Then, if the Landsberg curvature  vanishes,  $F$ is Berwald.
\end{corollary}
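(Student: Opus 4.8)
The plan is to reduce the Finslerian statement to the algebraic classification of Hessian isometries provided by Theorems \ref{thm:main1}, \ref{thm:mainA} and \ref{thm:mainB}, applied fiberwise. First, assume the Landsberg curvature of $(M,F)$ vanishes; we want to show $F$ is Berwald, i.e. that the Berwald parallel transport $\Phi_s:T_{c(0)}M\setminus\{0\}\to T_{c(s)}M\setminus\{0\}$ along every smooth curve $c$ is linear. By Fact \ref{fact:1}, vanishing Landsberg curvature already guarantees that each $\Phi_s$ is a Hessian isometry from $F_{|T_{c(0)}M}$ to $F_{|T_{c(s)}M}$ (with respect to the fundamental tensors $g_{c(0)}$ and $g_{c(s)}$). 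So the task is to prove that a Hessian isometry between two Minkowski norms, each of which admits in suitable linear coordinates an $SO(k)\times SO(n-k)$-symmetry, is automatically linear — at least under the extra structure coming from the fact that $\Phi_s$ is part of a smooth one-parameter family with $\Phi_0=\mathrm{id}$.

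The key case distinction is whether the relevant Minkowski norms are Euclidean. If $F_{|T_pM}$ is Euclidean for every $p$, then $F$ is Riemannian, hence Berwald, and there is nothing to prove. Otherwise, pick $p\in M$ with $F_{|T_pM}$ non-Euclidean. Working in the linear coordinates in which $F_{|T_pM}$ is $SO(k)\times SO(n-k)$-invariant, consider the group $H$ of Hessian self-isometries of $F_{|T_pM}$ that arise as holonomy, i.e. $\Phi_1$ for loops $c$ based at $p$; this is a subgroup of the full Hessian isometry group, and it contains, for each loop, a whole smooth path of Hessian isometries $\Phi_s$ joining it to the identity. In particular each such $\Phi_s$ lies in the connected isometry group $G_0$ of the fundamental tensor $g_p$ on $\mathbb{R}^n\setminus\{0\}$. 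By Theorem \ref{thm:main1}, since $F_{|T_pM}$ is non-Euclidean and $SO(k)\times SO(n-k)$-invariant, every element of $G_0$ is linear (indeed $G_0=SO(k)\times SO(n-k)$). Hence $\Phi_s$ is linear for all $s$ and all loops, so the Berwald holonomy at $p$ is linear. A standard argument then upgrades linear holonomy to linearity of the full Berwald parallel transport: fix $p$ as above, and for an arbitrary curve from $p$ to $q$, compose with its reverse to get a loop at $p$; since this composite is linear and one of the two factors (the reversed curve followed back) is... — more cleanly, one shows the set of points $q$ that can be joined to $p$ by a curve along which the Berwald transport is linear is open and closed in the (connected, by taking a path component) manifold $M$, using that the ODE defining $\Phi_s$ has coefficients depending smoothly on the base point and that linearity of $\Phi_1$ for every loop forces, via the composition law $\Phi_{c_2\ast c_1}=\Phi_{c_2}\circ\Phi_{c_1}$, linearity of every $\Phi_s$. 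Once the Berwald parallel transport is linear, $F$ is Berwald by definition.

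The main obstacle is the passage from ``$G_0$ consists of linear maps'' (which is exactly what Theorem \ref{thm:main1} delivers at a single, fixed point $p$ with a non-Euclidean, symmetric norm) to ``the Berwald parallel transport along every curve is linear, everywhere on $M$''. Several subtleties must be handled: (i) at points $q$ where $F_{|T_qM}$ happens to be Euclidean, Theorem \ref{thm:main1} gives no constraint, so one cannot argue pointwise and instead must propagate linearity from a non-Euclidean point using the group structure of holonomy and the fact that each $\Phi_s$ sits in a connected family through the identity; (ii) one must make sure the $SO(k)\times SO(n-k)$-coordinates can be chosen, if not globally, then consistently enough along curves — but this is not actually needed, because linearity of a map between vector spaces is coordinate-independent, and it suffices to know each tangent norm is linearly equivalent to a symmetric one; (iii) if $M$ is not connected one works component-wise; if the non-Euclidean locus is empty the metric is Riemannian. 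I expect that Theorems \ref{thm:mainA} and \ref{thm:mainB} enter only as a safety net for the degenerate situations — e.g. if along some curve the intermediate norms degenerate in the sense of condition \eqref{eq:zero}, those theorems still force $\Phi_s$ to be linear (a linear isometry, since the Legendre-transform alternative is excluded by $\Phi_0=\mathrm{id}$ and continuity, the identity not being the Legendre transform of a non-Euclidean norm). Thus the genuinely new input is Theorem \ref{thm:main1}, and the rest is a by-now-standard ``isometric holonomy $\Rightarrow$ Berwald'' argument that I would carry out via the open–closed/continuity method sketched above.
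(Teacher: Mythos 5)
Your reduction via Fact \ref{fact:1} is the right starting point, but the core of your argument has a genuine gap. First, the claim that ``each such $\Phi_s$ lies in the connected isometry group $G_0$ of the fundamental tensor $g_p$'' is not correct: for a loop $c$ based at $p$ and $0<s<1$, the map $\Phi_s$ goes from $T_pM\setminus\{0\}$ to $T_{c(s)}M\setminus\{0\}$, so the family $\{\Phi_s\}$ is not a path in the self-isometry group of $g_p$, and it does not place the holonomy element $\Phi_1$ in $G_0$ (for non-contractible loops there is no reason for $\Phi_1$ to lie in the identity component at all). Hence Theorem \ref{thm:main1} cannot be applied to the transport maps in the way you propose. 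Second, and more fundamentally, Berwald requires linearity of the parallel transport along \emph{every} curve, not just along loops at one point $p$; your ``open--closed'' upgrade is only a sketch, and what the composition law actually gives is that two transports from $p$ to $q$ differ by a linear holonomy element, i.e.\ the transports from $p$ to $q$ form a coset of a linear group --- this does not make any single transport linear. So the passage from ``linear holonomy at $p$'' to ``Berwald'' is exactly the missing step, and it is not standard.

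The paper avoids both problems by working along one curve $c$ at a time and classifying the isometry $\tau_s:T_{c(0)}M\to T_{c(s)}M$ \emph{between two different norms}: this is precisely what Theorems \ref{thm:mainA} and \ref{thm:mainB} are for, so they are not a ``safety net'' but the main tool (Theorem \ref{thm:main1} enters only to split off the Euclidean case and to guarantee, via $G_0=SO(k)\times SO(n-k)$ in every tangent space, that each $\tau_s$ maps orbits to orbits, which is a hypothesis of those theorems). On each connected component $C(U_1')$ of the set where \eqref{eq:notzero} holds, Theorem \ref{thm:mainA} says $\tau_s$ is either linear or a Legendre transformation composed with a linear map; these two possibilities are mutually exclusive there (Lemma \ref{lemma-002}), both define closed subsets of $s\in[0,1]$, and since $\tau_0=\mathrm{id}$ is linear, connectedness of $[0,1]$ forces linearity of $\tau_1$ on $C(U_1')$ --- note that this connectedness argument runs in the parameter $s$ along a fixed curve, which is legitimate, unlike connectedness inside a single isometry group. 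Theorem \ref{thm:mainB} handles the components where \eqref{eq:zero} holds, and density plus positive homogeneity upgrades linearity on a dense conic set to linearity of $\tau_1$. If you want to salvage your approach, you would have to supply a proof that linear Berwald holonomy at one point implies Berwald, which is a substantial statement you neither prove nor cite; as written, the proposal does not establish the corollary.
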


Many special cases of Corollary \ref{thm:main2} appeared in the literature before. Let us give some examples with the dimension $n\geq3$:  \cite{Ma1974} (see also \cite{MZ2002}) proved that every
Randers metric such that its Landsberg curvature is zero is Berwald.
 \cite{Sh2009} proved that every $(\alpha,\beta)$ metric with zero Landsberg curvature
 is Berwald. \cite{ZWL2019} proved that every general $(\alpha, \beta)$ metric
with zero Landsberg curvature is Berwald. All these  results  follow  from Corollary \ref{thm:main2} with $k=1$, since  for every $p\in M$ the restriction of a Randers, $(\alpha,\beta)$ or general $(\alpha, \beta)$ metric to $T_pM$ is invariant with respect to a block diagonal action of $SO(n-1)$ \cite{DX2015}. Indeed, general  $(\alpha, \beta)$
is defined as follows: one takes a  Riemannian metric $\alpha=(a_{ij})$, a  $1$-form  $\beta=(\beta_i)$,
  a function $\varphi$ of two varables, and defines  $F$ by the formula
\begin{equation}\label{eq:albet}
F(p,y)=\varphi\left( |\beta|_{\alpha},\frac{\beta(y)}{\sqrt{\alpha(y,y)}}\right) \sqrt{\alpha(y,y)},
\end{equation}
where $|\beta|_{\alpha}= \sqrt{\alpha^{ij}\beta_i\beta_j}$  is the point-wise norm of $\beta$ in $\alpha$ and $\alpha(y,y)= \alpha_{ij}y^iy^j =
\left(|y|_{\alpha}\right)^2$. The function    $\varphi$  is
 chosen such that \eqref{eq:albet} is a Finsler metric. For certain $\varphi$, additional restrictions on $|\beta|_{\alpha}$ must be assumed to insure  the result is a Finsler metric.
 chosen such that \eqref{eq:albet} is a Finsler metric. For certain $\varphi$, additional restrictions on $|\beta|_{\alpha}$ must be assumed to insure  the result is a Finsler metric.

$(\alpha, \beta)$ metrics are  general $(\alpha, \beta) $ metrics such that the function
 $\varphi$ does not depend on $|\beta|_{\alpha}$ (so it is a function of one variable). Randers  metrics are $(\alpha, \beta)$ metrics for the function  $\varphi(t)= 1+ \tfrac{1}{t}$. In the last
 case the restriction insuring  that this $\varphi$ determines a Finsler metric is $|\beta|_{\alpha}<1$.

Note that  the proofs from \cite{Ma1974,Sh2009,ZWL2019} essentially use that the function $\varphi(t,s)$ is the same at all points of the manifold, so
 the dependence of Randers, $(\alpha, \beta)$ and general   $(\alpha, \beta)$ metrics   on the position $p\in M$  essentially goes
 through the  dependence of $\alpha$ and $\beta$ on $p$ only.
 In our proof we need only that in each tangent space $F$ has a linear  $SO(n-1)$-symmetry. In other words, the function $\varphi$ may arbitrary depend on the point $p$ of the manifold.

Another  example of such type is \cite{DX2016, XD2019}: there, the so-called  $(\alpha_1,\alpha_2)$ metrics are considered, their definition which we do not recall here is similar to that of $(\alpha, \beta)$ metrics.
 In this case, the restriction of the metric to each tangent space is invariant with respect to the $SO(k)\times SO(n-k)$-action. The analog of the function $\varphi$ is the same at all points of the manifold so
the dependence of the  metric on position goes through  $\alpha_1$ and $\alpha_2$ only. By our result, the function $\varphi$ may
 arbitrarily depend on the position.
% which defines a {general $(\alpha_1,\alpha_2)$ metric}.

A slightly different result which also follows from  Corollary \ref{thm:main2} is in \cite{MZ2014}, where
nonexistence of non-Berwaldian  Finsler manifolds with vanishing Landsberg curvature was shown in  the class of spherically symmetric metrics. By definition,
 Finlser metric on $\mathbb{R}^n\setminus\{0\}$ is {\it spherically symmetric}, if it
is invariant with respect to the standard action of $SO(n)$. This condition  implies that
 the restriction of $F$ to  every tangent space has $SO(n-1)$-symmetry and  Corollary \ref{thm:main2} is applicable.

Alternative   geometric approach  that was successfully used for the proof of  Landsberg Unicorn Conjecture for certain generalisations of
$(\alpha,\beta)$ metrics is based on  semi-C-reducibility \cite{Cr2020, FL2018, MS1979}.   The results of these papers related to the Landsberg Unicorn Conjecture  also easily follow from our Corollary \ref{thm:main2}. Notice that generic $(\alpha_1,\alpha_2)$ metrics do not satisfy the semi-C-reducibility.
% Using a little Lie theory, it is easy to check that a generic Minkowski norm on $\mathbb{R}^n$ which is invariant with respect to the standard $SO(k)\times SO(n-k)$-action, with $n\geq 3$ and $1\leq k\leq n/2$, satisfies semi-C-reducibility
%only when $k=1$.

\subsection{  Smoothness assumption is necessary.}
\label{subsection-1-5}

G. Asanov constructed some singular norms $F$ on $\mathbb{R}^3$ with the standard $SO(2)$-symmetry \cite{An1995,An1998}
%\cite{As2006}.
His examples  can be generalised to any dimension $n \ge 3$ and give singular norms on    $\mathbb{R}^n$ with linear $SO(n-1)$-symmetry, see e.g. \cite{ZWL2019}. They  lead to the construction of
first singular unicorn metrics \cite{As2006,As2007} and were actively discussed in the literature (e.g. \cite{Cr2011}).

The Minkowski norms in all these examples are not smooth at the line which is fixed by the $SO(n-1)$-action,  but they are  smooth and even real analytic elsewhere.  Their isometry group is $O(n-1)$ but locally the algebra of Killing vector fields is isomorphic to $so(n)$ and has the dimension $\tfrac{(n-1)n}{2}$.

Within this paper we assume that all objects we consider are sufficiently smooth.
Asanov's  examples and their generalisations
show that this smoothness
 assumption  is necessary.   It also shows (complimentary to Remark \ref{Rem:0040})  that Theorem \ref{thm:main1} is not a local statement.

\section{Hessian isometry on a Minkowski space with $SO(k)\times SO(n-k)$-symmetry}
\subsection{Setup.}
%We will prove Theorem \ref{thm:main1} in this section. Then
%as we explained in Section \ref{subsection-1-3},
%Laugwitz Conjecture for Minkowski norms with $SO(k)\times SO(n-k)$-symmetry, i.e., Corollary \ref{cor:1} follows immediately

Within the whole section we  work in a Minkowski space $(\mathbb{R}^n,F)$ with $n\ge 3$. We denote $S_F=\{ y\in \mathbb{R}^n \mid  F(y)=1\}$ the indicatrix of $F$,
  and $g$ the Hessian metric $\tfrac{1}{2}{\rm d}^2F^2$ of $F$ on $\mathbb{R}^n\backslash\{0\}$ or its restriction to $S_F$ (and other submanifolds).  We assume that $F$ is invariant with respect  to the standard block diagonal action of
	$SO(k)\times  SO(n-k)$, with  $1\leq k\leq n/2$.
		
We start with the following simple observation:
		
\begin{lemma} \label{Lem:1}
Suppose $F$ is a Minkowski norm on $\mathbb{R}^n$ which is invariant with respect to the standard block diagonal action of
$SO(k)\times SO(n-k)$ with $n\geq 3$ and $1\leq k\leq n/2$.
Then $F$ is invariant with respect to the standard block diagonal action of $O(n-1)$ or $O(k)\times O(n-k)$ when $k=1$ or $k>1$ respectively.
\end{lemma}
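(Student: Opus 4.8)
The plan is to compare the orbits of the smaller group $SO(k)\times SO(n-k)$ with those of the larger group in each case, using the elementary reformulation that $F$ is invariant under a group acting on $\mathbb{R}^n$ if and only if $F$ is constant on the orbits of that group. The single input needed is the classical fact that $SO(m)$ acts transitively on the unit sphere $S^{m-1}\subset\mathbb{R}^m$ whenever $m\geq 2$; this fails for $m=1$, and that is precisely what forces the case distinction $k=1$ versus $k>1$ in the statement. Throughout, write $\mathbb{R}^n=V'\oplus V''$ for the $SO(k)\times SO(n-k)$-invariant decomposition with $\dim V'=k$ and $\dim V''=n-k$, decompose $y=v'+v''$ accordingly, and let $|\cdot|$ denote the standard Euclidean norm on $V'$ and on $V''$.

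First I would treat the case $k>1$. Since $1\leq k\leq n/2$, we then have $n-k\geq k\geq 2$, so both $SO(k)$ and $SO(n-k)$ act transitively on the corresponding unit spheres. Consequently the $SO(k)\times SO(n-k)$-orbit of a point $y=v'+v''$ is exactly $\{w'+w''\mid |w'|=|v'|,\ |w''|=|v''|\}$. But this set is visibly also the $O(k)\times O(n-k)$-orbit of $y$, because $O(k)$ and $O(n-k)$ still act transitively on these spheres while preserving $|\cdot|$. Since $F$ is constant on $SO(k)\times SO(n-k)$-orbits, it is constant on $O(k)\times O(n-k)$-orbits, hence $O(k)\times O(n-k)$-invariant.

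For $k=1$ the factor $SO(1)$ is trivial, so $SO(1)\times SO(n-1)$ is just $SO(n-1)$ acting on $V''$ and trivially on $V'$; as $n\geq 3$ we have $n-1\geq 2$, so this factor is still transitive on the unit spheres of $V''$. The orbit of $y=v'+v''$ is therefore $\{v'+w''\mid |w''|=|v''|\}$, which coincides with the orbit of $y$ under $O(n-1)$ acting on $V''$ (and trivially on $V'$). As before this gives $O(n-1)$-invariance of $F$, which is the asserted conclusion. I would also note explicitly that one \emph{cannot} enlarge further to an $O(1)$-action on $V'$ (i.e. to $F(-v'+v'')=F(v'+v'')$), since nothing in the hypotheses forces this — consistent with the asymmetry in the statement of the lemma.

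I do not expect any genuine obstacle: the content is simply recognising that orbit comparison is the right move, rather than attempting to build generators of $O(k)$ (resp. $O(n-1)$) outside the identity component by hand. The only minor points requiring care are verifying $n-k\geq 2$ in all admissible cases — which follows from $n\geq 3$ and $1\leq k\leq n/2$ — and keeping track of why the transitivity argument degenerates exactly at the $1$-dimensional block.
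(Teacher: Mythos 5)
Your proof is correct and follows essentially the same route as the paper: both arguments rest on the observation that for $m\geq 2$ the $SO(m)$-orbits (spheres) coincide with the $O(m)$-orbits, so invariance on orbits upgrades $SO$ to $O$, with the $k=1$ case degenerating exactly because $SO(1)$ is trivial. The only cosmetic difference is that you compare the full product orbits at once, whereas the paper promotes each block factor $SO(k)\times\{e\}$ and $\{e\}\times SO(n-k)$ separately; this changes nothing of substance.
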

		
Note that $SO(1) =\{e\}$   so the action
of $O(n-1)=SO(1)\times  O(n-1)$ is  just that by the orthogonal matrices of the form $\mathrm{diag}(1,A)$ with $A\in O(n-1)$.

\begin{proof} Clearly, when $k\neq 1$,
the orbits of the action of $SO(k)\times \{e\}$
 coincide with that of  $O(k)\times \{e\}$,
 so the function $F$, which is invariant with respect to the action of $SO(k)\times \{e\}$, is also invariant with respect to the action of $O(k)\times \{e\}$.
Similarly, by $k\leq n/2\leq n-2$, $F$ is invariant with respect to the
action of $\{e\}\times O(n-k)$.
\end{proof}

\subsection{Proof of Theorem \ref{thm:main1}  for $k=1$.}

We consider the indicatrix $S_F$ with the restriction of the Hessian metric $g$. Let $G_0$
be the connected isometry group for $(\mathbb{R}^n\backslash\{0\},g)$, then it is also the connected isometry group for $(S_F,g)$. We assume that $F$ is invariant with respect to the standard  block diagonal action of $SO(n-1)$. It implies that $G_0$
 naturally  contains the group $SO(n-1)$ as a subgroup.

If  $G_0$ coincides with $SO(n-1)$, there is nothing to prove.  The next Lemma shows that if $G_0$ does not coincide with $SO(n-1)$ then $(S_F, g)$
is isometric to the standard unit sphere.

\begin{lemma} \label{Lem:2}
In the notation above, assume $G_0$ does not coincide with $SO(n-1)$. Then  $(\mathbb{R}^n\backslash\{0\}, g)$ is flat, and $(S_F,g)$ has constant sectional curvature 1.
\end{lemma}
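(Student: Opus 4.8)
The plan is to move the whole problem onto the compact hypersurface $(S_F,g|_{S_F})$, show that the hypothesis $G_0\neq SO(n-1)$ forces $G_0$ to act with maximal mobility, deduce constant curvature, and then use that $g$ is a Hessian metric to identify the constant with $1$. First I would record the soft facts: $S_F$, being the boundary of a convex body, is a compact simply connected manifold (diffeomorphic to $S^{n-1}$, since $n\ge 3$), so $G_0=\mathrm{Isom}_0(S_F,g|_{S_F})$ is a compact connected Lie group containing $SO(n-1)$, and as the inclusion is proper between connected groups, $\dim G_0>\dim SO(n-1)=\binom{n-1}{2}$. The first genuine step is to show $G_0$ is transitive on $S_F$. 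The $SO(n-1)$-action here ($k=1$, so $\dim V'=1$) is of cohomogeneity one, with singular orbits the two points $p_\pm=S_F\cap V'$ and principal orbits $(n-2)$-spheres carrying an $SO(n-1)$-invariant — hence round — metric. If $G_0$ were not transitive then, $G_0$ being compact and $S_F$ connected, every $G_0$-orbit would have dimension $\le n-2$; at a principal point the $G_0$-orbit contains the $(n-2)$-dimensional $SO(n-1)$-orbit and would therefore coincide with it, so $G_0$ would preserve the orbit decomposition and, being connected, fix every orbit. Restricting $G_0$ to one principal orbit $P\cong S^{n-2}$ would give a homomorphism $\rho\colon G_0\to\mathrm{Isom}(P,g|_P)=O(n-1)$ with image a compact connected subgroup containing the standard $SO(n-1)$, hence equal to $SO(n-1)$; then $\ker\rho$ would have positive dimension, yet $\ker\rho=\{h\in G_0:h|_P=\mathrm{id}_P\}$ has at most two elements, because an isometry of $S_F$ restricting to the identity on the hypersurface $P$ is determined by its (normal) differential at one point — a contradiction. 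Hence some $G_0$-orbit is $(n-1)$-dimensional, so open and, by compactness, closed, hence all of $S_F$.

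With transitivity in hand I would compute $\dim G_0$. The isotropy $H$ at $p_+$ contains $SO(n-1)$, and the isotropy representation $H\to O(T_{p_+}S_F)=O(n-1)$ is faithful (an isometry fixing a point is determined by its differential there); its image is a compact subgroup of $O(n-1)$ containing the standard $SO(n-1)$, hence $SO(n-1)$ or $O(n-1)$, so $\dim H=\binom{n-1}{2}$ and $\dim G_0=(n-1)+\binom{n-1}{2}=\binom{n}{2}$. This is the maximal dimension for the isometry group of an $(n-1)$-dimensional Riemannian manifold, so by the classical maximal-mobility theorem $(S_F,g|_{S_F})$ has constant sectional curvature $\kappa$, with $\kappa>0$ since $S_F$ is compact, simply connected and of dimension $\ge 2$. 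By the cone identity $g=(\mathrm dF)^2+F^2g|_{S_F}$ recalled in the introduction, $(\mathbb R^n\setminus\{0\},g)$ is locally the metric cone over $(S_F,g|_{S_F})$, and a short Gauss-equation computation — radial $2$-planes have zero curvature while a $2$-plane tangent to $\{F=c\}$ has curvature $(\kappa-1)/c^2$ — shows such a cone is flat exactly when $\kappa=1$. Thus both assertions of the Lemma reduce to the single claim $\kappa=1$.

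The hard part will be proving $\kappa=1$, since the symmetry alone does not rule out a genuinely conical ($\kappa\neq1$) Hessian metric; one must use that $g$ comes from $E=\tfrac12F^2$. My plan is to exploit that the Levi-Civita connection of $g$ differs from the flat affine connection $D$ of the linear coordinates by the totally symmetric Cartan tensor $C_{ijk}=\tfrac12\partial_kg_{ij}=\tfrac14\partial_i\partial_j\partial_k(F^2)$, that $D$ and its $g$-dual connection are both flat (dual flatness of Hessian structures), which yields $R^g(X,Y)Z=C_Y(C_XZ)-C_X(C_YZ)$ with $C_X$ the $g$-self-adjoint endomorphism associated to $C(X,\cdot,\cdot)$, and that $1$-homogeneity of $F$ forces $C(\vec y,\cdot,\cdot)=0$, so $C$ annihilates the radial direction. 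Matching this curvature formula against the cone values gives, at every point of $S_F$ and every $g$-orthonormal pair $X,Y$ tangent to $S_F$, the identity $\kappa-1=|C_X Y|_g^2-g(C_XX,C_YY)$, and after tracing, $(n-1)(n-2)(\kappa-1)=\|C\|_g^2-\|\mathbf I\|_g^2$, where $\mathbf I$ is the mean Cartan torsion. The remaining, and I expect most delicate, step is to deduce from the global constancy of $\kappa$ together with the smoothness of $F$ on all of $\mathbb R^n\setminus\{0\}$ (in particular along the axis $V'$, where $C$ is strongly constrained) that $\kappa=1$ and $C\equiv0$ on $S_F$; an alternative route I would also try is to pass to the cohomogeneity-one picture, write $F$ via a single profile function, compute $g|_{S_F}$ explicitly, impose constant curvature, and check that the resulting ODE admits only the Euclidean solution, for which $\kappa=1$.
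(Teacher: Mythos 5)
Your reduction is sound as far as it goes: the transitivity of $G_0$ on $S_F$, the dimension count $\dim G_0=\tfrac{n(n-1)}{2}$, constant curvature $\kappa>0$ via the maximal-mobility theorem, and the cone computation showing that $g$ is flat precisely when $\kappa=1$ are all correct (and the maximal-mobility route is in fact heavier machinery than the paper needs, which gets by with homogeneity plus a pointwise argument). But the proposal stops exactly at the lemma's real content: you never prove $\kappa=1$. You explicitly defer it (``the remaining, and I expect most delicate, step''), listing two untried routes, so as written this is a genuine gap rather than a detail. It is also not a formality: Remark \ref{Rem:0040} and the Asanov-type examples discussed in Section \ref{subsection-1-5} give $SO(n-1)$-invariant norms, smooth away from the axis $V'$, whose indicatrix has constant curvature different from $1$; hence no amount of symmetry together with constancy of $\kappa$ can force $\kappa=1$ --- smoothness of $F$ along the axis must enter in an essential way, and your sketch never says how.

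The missing idea, which is how the paper closes this, is to evaluate everything at the $SO(n-1)$-fixed point $y_0=S_F\cap V'$ (this is precisely where smoothness along the axis is used). At $y_0$ the linear isotropy acting on $T_{y_0}S_F\cong\mathbb{R}^{n-1}$ is the full $SO(n-1)$, so for every $v\in T_{y_0}S_F$ there is $A\in SO(n-1)$ with $Av=-v$ (possible since $n-1\ge 2$). Since the Cartan tensor is totally symmetric, annihilates the radial direction, and is preserved by linear isometries of $F$, Cartan's trick gives $C(v,v,v)=C(-v,-v,-v)=-C(v,v,v)$, hence $C\equiv 0$ at $y_0$. Plugging this into your own identity $\kappa-1=|C_XY|_g^2-g(C_XX,C_YY)$ (equivalently, into the Hessian curvature formula \eqref{eq:fact}, which forces $R=0$ at $y_0$ and hence, via Gauss--Codazzi for the cone, curvature $1$ of $g|_{S_F}$ at $y_0$) yields $\kappa=1$, completing your argument. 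Note that once you have this pointwise statement you can drop the maximal-mobility step altogether: transitivity of $G_0$ alone transports the flatness at $y_0$ over all of $S_F$, which is the paper's lighter route to the same conclusion.
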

\begin{proof}
Let us assume that $G_0$ does not coincide with $SO(n-1)$, i.e.,
$\dim G_0\geq \tfrac{(n-1)(n-2)}{2}+1$.

We  first prove that $(S_F, g)$ is a homogeneous Riemannian sphere.
Here we apply a proof of this claim for all $n\geq 3$, which is similar to that of
 \cite[Theorem 1]{Ya1953}, see also \cite[\S 4]{Is1955}.
Notice that when $n\neq5$, \cite[Theorem 1]{Ya1953} provides an alternative approach. Indeed, we can also see that $(S_F,g)$ has constant sectional curvature, by \cite[Theorem  10]{Ob1955} and \cite[Theorem 5]{Is1955} when $n\neq 5$ and $n=5$ respectively, though it would not be needed in later argument.

Consider  the ``pole''
 $y_0= (a_0,0,...,0)\in S_F$. It  is a fixed point for the $SO(n-1)$-action. Consider its $G_0$-orbit
$$G_0\cdot y_0= \{\Phi(y_0)\mid \Phi\in G_0\}.$$
Let $H\subset G_0$ be
the
stabilizer  of  $y_0$.
 It is known that the stabilizer of a point with respect to an isometric action on an $(n-1)$-dimensional manifold
 is at most $\tfrac{(n-1)(n-2)}{2}$-dimensional, so we have
$\dim G_0>\tfrac{(n-1)(n-2)}{2}\geq \dim H$, i.e.,
there exists $y\in G_0\cdot y_0$ with $y\neq y_0$. The orbit $G_0\cdot y_0$ is connected, so we can find a curve $\gamma\subset G_0\cdot y_0$ connecting $y$ and $y_0$.
Then $G_0\cdot y_0\supset
SO(n-1)\cdot\gamma$ contains a $SO(n-1)$-invariant neighbourhood $U_0$ of $y_0$ in $S_F$. By its homogeneity,
$G_0\cdot y_0$ is an open subset of $S_F$. On the other hand, it is closed because $G_0$ is a compact Lie group. So we must have
$G_0\cdot y_0=S_F$, i.e., $(S_F,g)$ is a homogeneous sphere.

%For dimension $n\neq 5$, this is known. Indeed, in this case
% by \cite[Theorem 1]{Ya1953}
%the Riemannian manifold $(S_F,g)$  is homogeneous. Homogeneous $n-1$-dimensional
%manifolds whose isometry group has dimension $\ge  \tfrac{(n-1)(n-2)}{2}+1$ are completely understood.
%For   $n\neq 5$,  \cite[Theorem  10]{Ob1955} implies   that  $(S_F, g)$   has constant sectional curvature.
%
%Thus, we only need to prove the statement that $(S_F,g)$ has constant sectional curvature in dimension
%$n=5$ (i.e., when $S_F$ is a four dimensional sphere).
%
%We first show that $(S_F, g)$ is homogeneous\footnote{The proof is similar to that of
% \cite[Theorem 1]{Ya1953}, see also \cite[\S 4]{Is1955}}.
%
%
% This implies that  $G_0$ acts locally transitively in a small neighborhood of  any its point, since for a point $y= \gamma(y_0)$ the set $\gamma(U_0)$
%is open because every isometry is a diffeomorphism.  Thus, the orbit $G_0(y_0)$ is open as subset of $S_F$. It is clearly closed, since $G_0$ is compact  as a connected component of the  group of isometries. Finally, $G_0(y_0)= S_F$ so $(S_F, g)$ is a homogeneous manifold as we claimed.
%
%Simply connected homogeneous closed manifolds of dimension $n-1=4$
%were completely understood  and classified  in \cite[Theorem 5]{Is1955}. From this classification it follows that
%when $\dim G_0\geq \tfrac{(n-1)(n-2)}{2} +1= 7$  and  the manifold is homeomorphic to the sphere then
%$(S_F, g)$ has constant curvature.

Next, we prove that the Hessian metric $g$ on $\mathbb{R}^n\backslash\{0\}$ is flat, and its restriction to  $S_F$
has constant curvature 1.

The {\it Cartan tensor} at $y=(x_1,\cdots,x_n)\in\mathbb{R}^n\backslash\{0\}$ is defined as
\begin{eqnarray*}
C(u,v,w)=\tfrac{1}{4}
\tfrac{\partial^3}{\partial s_1\partial s_2 \partial s_3}_{|s_1=s_2=s_3=0}F(y+s_1u+s_2v+s_3w)^2.
\end{eqnarray*}
for any $u,v,w\in\mathbb{R}^n=T_y\mathbb{R}^n$ (so its $(ijk)$-component is
$C_{ijk}=\tfrac{1}{4}\tfrac{\partial^3 (F^2)}{\partial x_i\partial x_j \partial x_k}$).

Now we show the Cartan tensor vanishes
 at $y_0=(a_0,0,\cdots,0)\in S_F$.

Clearly, it is multiple linear and totally symmetric.
By the positive
 $1$-homogeneity of $F$,
 at every point $y\in \mathbb{R}^n\setminus \{0\}$
 and for every vectors $u,v\in\mathbb{R}^n$,  we have
  $C(y, u, v)=0$ at $y$.
So we only need to show, for each vector
	$v$ with zero $x_1$-coordinate (i.e., $v\in T_{y_0}S_F$), we have $C(v, v, v)=0$ at $y_0$. Cartan's trick can be applied to avoid direct calculation. The group $SO(n-1)$ acts transitively
	on the unit $g$-sphere in $T_{y_0}S_F$. So there exists $A\in SO(n-1)$  with $Av=-v$. That means, the linear isometry induced by $A$ fixes $y_0$ and has a tangent map at $y_0$ mapping $v$ to $-v$.
It preserves the Cartan tensor as well, so we have
	$$C(v,v,v)= C(-v, -v, -v)=-C(v,v,v)$$
at $y_0$, which implies $C=0$ there.
		
Now we use the following well-known  fact in Hessian geometry:
	
\begin{fact}[e.g. Proposition 3.2 of \cite{Sh2007}]\label{lemma-curvature-formula-Hessian-geometry}
Consider the Hessian metric generated by a  (not necessary 2-homogeneous) function $E$, $g= {\rm d}^2  E$. Then, its curvature tensor $R_{ijk\ell}$
is given by
\begin{equation}\label{eq:fact-1}
R_{ijk\ell} = \tfrac{1}{4} \sum_{s,r} \left(\frac{\partial^3 E}{\partial x_j \partial x_\ell \partial x_s} g^{sr}  \frac{\partial^3 E}{\partial x_k \partial x_i \partial x_r}-  \frac{\partial^3 E}{\partial x_i \partial x_\ell \partial x_s} g^{sr}  \frac{\partial^3 E}{\partial x_k \partial x_j \partial x_r}\right),\end{equation}
where $g^{rs}$ denote the components of the matrix inverse to $(g_{rs})$.
\end{fact}	

If
$E= \tfrac{1}{2} F^2$ for a Minkowski norm $F$, the curvature formula \eqref{eq:fact-1}
 is reduced to
\begin{equation}\label{eq:fact}
R_{ijk\ell} = \sum_{s,r} \left(C_{i\ell s} g^{sr}  C_{jk r} -   C_{iks} g^{sr}  C_{j\ell r} \right).
\end{equation}
As we explained above, at $y_0$, every $C_{ijk}$ vanishes, so we have
$R_{ijk\ell} =0$, $\forall i,j,k,\ell$. In particular, the sectional curvature of $(\mathbb{R}^n\backslash\{0\},g)$ vanishes at $y_0$.

As we recalled in  Section \ref{subsection-1-1}, the Hessian metric $g=({\rm d}F)^2+F^2 g_{|S_F}$ on $\mathbb{R}^n\backslash\{0\}$ is the cone metric over  its restriction to $S_F$. Then by Gauss-Codazzi equation, the sectional curvature of $(S_F,g)$ equals to $1$ at $y_0$.   Since $(S_F,g)$ is
 homogeneous by assumptions,  $(S_F,g_{S_F})$ has constant sectional curvature 1 at every point, i.e., it is isometric to the standard unit sphere.
Then, the metric $g$  is flat as we claimed.
\end{proof}

The next Lemma  finishes the proof of Theorem \ref{thm:main1} for
$k=1$.

\begin{lemma} \label{lem:final}
Let $F$ be a Minkowski norm  on $\mathbb{R}^n$ with $n\geq 3$, which is
invariant with respect to the standard block diagonal action of $O(n-1)$. Assume the curvature of the Hessian metric $g= \tfrac{1}{2} {\rm d}^2(F^2)$ on $\mathbb{R}^n\backslash\{0\}$
identically vanishes. Then $F$ is Euclidean.
\end{lemma}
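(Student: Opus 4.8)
The plan is to use the $O(n-1)$-symmetry to bring $g$ into a rotationally symmetric warped form, to convert the vanishing of its curvature into an ordinary differential equation along the profile curve of the indicatrix, and then to integrate that equation.

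First I would introduce $u=x_1$ and $r=\sqrt{x_2^2+\cdots+x_n^2}$, so that $E=\tfrac12F^2$ becomes a function $E(u,r)$ which is positively $2$-homogeneous in $(u,r)$. A direct computation of ${\rm d}^2E$ gives
\[
g=g_2+rE_r\,d\omega^2,\qquad g_2:=E_{uu}\,du^2+2E_{ur}\,du\,dr+E_{rr}\,dr^2,
\]
where $d\omega^2$ is the round metric of the unit $(n-2)$-sphere and subscripts denote partial derivatives. The plane $P=\{x_3=\cdots=x_n=0\}$ is the fixed-point set of the subgroup $O(n-2)\subset O(n-1)$ acting on $x_3,\dots,x_n$, hence it is totally geodesic in $(\mathbb{R}^n\setminus\{0\},g)$, so $g_2=g|_P$ is flat. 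Writing $g_2$ in its generalized polar coordinates $(F,\theta)$, with $\theta$ the $g_2$-arclength along the profile $S_F\cap P$, we get $g_2=dF^2+F^2\,d\theta^2$, and homogeneity then gives
\[
g=dF^2+F^2\bigl(d\theta^2+m(\theta)^2\,d\omega^2\bigr),\qquad m(\theta)^2=\frac{rE_r}{F^2}.
\]
In particular $(S_F,g)$ is the rotationally symmetric metric $d\theta^2+m(\theta)^2d\omega^2$ on $S^{n-1}$, and $m$ vanishes at the two poles (the $O(n-1)$-fixed points of $S_F$, where $r=0$).

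Since $g$ is flat, the cone metric above over $(S_F,g)$ is flat, hence $(S_F,g)$ has constant sectional curvature $1$ (one may also invoke Lemma \ref{Lem:2}). For a metric $d\theta^2+m(\theta)^2d\omega^2$ this means $m''+m=0$ and $(m')^2+m^2=1$, which together with $m=0$ at a pole forces $m(\theta)=\sin\theta$, $\theta\in[0,\pi]$. Consequently, on the indicatrix $\{F=1\}$ one has $rE_r=\sin^2\theta$, and since $uE_u+rE_r=2E=1$ there, also $uE_u=\cos^2\theta$.

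The heart of the proof is to exploit these two relations. Parametrize the profile curve by $\theta$ as $\gamma(\theta)=(u_0(\theta),r_0(\theta))$ and set $p=E_u\circ\gamma$, $q=E_r\circ\gamma$. Using that $\gamma$ is a $g_2$-unit normal field along the profile, $g_2$-dual to $dE$, together with the Euler relations $E_{uu}u+E_{ur}r=E_u$ and $E_{ur}u+E_{rr}r=E_r$, one computes
\[
\dot u_0=-\frac{q}{\sqrt{\det g_2}},\quad \dot r_0=\frac{p}{\sqrt{\det g_2}},\quad \dot p=-r_0\sqrt{\det g_2},\quad \dot q=u_0\sqrt{\det g_2}.
\]
Differentiating $u_0p=\cos^2\theta$ and $r_0q=\sin^2\theta$ and combining the results with their product $u_0r_0\,pq=\tfrac14\sin^2(2\theta)$, a short manipulation (completing a square) yields $u_0r_0\det g_2=pq$, i.e. $E_uE_r=ur\det g_2$ on the indicatrix. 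On the other hand, the algebraic identity $E_uE_r-ur\det g_2=2E\cdot E_{ur}$ holds for every positively $2$-homogeneous $E$ (expand $E_u=E_{uu}u+E_{ur}r$, $E_r=E_{ur}u+E_{rr}r$ and use $E_{uu}u^2+2E_{ur}ur+E_{rr}r^2=2E$). As $2E=1$ on the indicatrix, $E_{ur}$ vanishes there, hence everywhere since it is positively $0$-homogeneous. Therefore $E(u,r)=a(u)+b(r)$; positive $2$-homogeneity and the smoothness of $E$ on all of $\mathbb{R}^n\setminus\{0\}$ — in particular at the interior points with $x_1=0$ — force $a(u)=\alpha u^2$ and $b(r)=\beta r^2$ with $\alpha,\beta>0$, so $F=\sqrt{2\alpha\,x_1^2+2\beta(x_2^2+\cdots+x_n^2)}$ is Euclidean.

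The main obstacle is precisely this middle step: turning ``$m=\sin\theta$, with $\theta$ the $g_2$-arclength'' into something algebraic, since $\theta$ depends transcendentally on the profile and on $g_2$. The device that resolves it is to parametrize by $\theta$ and carry along the Legendre-type quantities $p=E_u$ and $q=E_r$, whose $\theta$-derivatives close up algebraically in $(u_0,r_0,p,q,\det g_2)$; once that closed system is available, the remaining work — including the routine check of the curvature identities and of the behaviour at the poles — is bookkeeping.
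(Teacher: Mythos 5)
Your argument is correct, and it takes a genuinely different route from the paper. The paper reduces to $n=3$ via totally geodesic fixed-point sets of involutions in $O(n-1)$, then works with the Cartan tensor and the Hessian curvature formula \eqref{eq:fact}: in spherical coordinates flatness becomes $C_{\theta\theta\theta}C_{\theta\phi\phi}=0$, the identity $\partial_\theta C_{\theta\phi\phi}=\tfrac12\sin 2\theta\,C_{\theta\theta\theta}$ together with vanishing of $C$ at the poles forces $C_{\theta\phi\phi}\equiv0$, and solving the resulting ODE \eqref{eq:cur} gives $f=c_1+c_2\cos2\theta$, i.e.\ a quadratic $E$. You instead put $g$ in the warped form $dF^2+F^2\bigl(d\theta^2+m(\theta)^2d\omega^2\bigr)$ with $m^2=rE_r/F^2$, use flatness of the cone to force the link to have curvature $1$ and hence $m=\sin\theta$, and then close a first-order system along the profile in the Legendre-type variables $p=E_u$, $q=E_r$ (your four ODEs and the completing-the-square step, which I checked, do give $E_uE_r=ur\det g_2$ on the indicatrix); combined with the homogeneity identity $E_uE_r-ur\det g_2=2E\,E_{ur}$ this yields $E_{ur}\equiv0$ and hence a quadratic $E$. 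Your approach works uniformly in all $n\ge3$ without the reduction step and avoids the Cartan tensor and formula \eqref{eq:fact} entirely; the paper's coordinate computation, on the other hand, produces as a byproduct the explicit ODE \eqref{eq:cur} and its general solution $c_1+c_2\cos2\theta+c_3\sin2\theta$, which are reused later (Remark \ref{Rem:0040}, the proof of Theorem \ref{thm:mainB}, and Lemma \ref{lemma-002}), so it localizes the statement and feeds the rest of the paper. One small point to make explicit: for $n=3$ the sphere factor is one-dimensional, so constant curvature $1$ of the link only gives $m''+m=0$, not $(m')^2+m^2=1$; the normalization $m=\sin\theta$ (rather than $A\sin\theta$) then needs the smoothness of $g_{|S_F}$ at the poles (absence of a conical point, since the orbit circles have length $2\pi m(\theta)\sim 2\pi\theta$), which is exactly the ``behaviour at the poles'' you deferred to bookkeeping; for $n\ge4$ both equations do follow from curvature alone.
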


\begin{proof}
We  first prove Lemma \ref{lem:final} when $n=3$.

We consider the spherical  coordinates $(r, \theta, \phi)\in
\mathbb{R}_{>0}\times (0,\pi)\times (\mathbb{R}/(2\mathbb{Z}\pi))$  on $\mathbb{R}^3$ determined by
$$
x_1=r \cos\theta  , \  \ x_2= r \sin\theta \cos\phi, \   \ x_3= r  \sin\theta \sin\phi.$$
The $SO(2)$-action is the left multiplication on column vectors by matrices of the form
$$
\begin{pmatrix} 1 & 0 & 0\\
                0 & \cos s & \sin s\\
                 0 & -\sin s & \cos s\end{pmatrix},
$$
i.e., it fixes the $r$- and $\theta$-coordinates and shifts the $\phi$-coordinate.
By its $SO(2)$-invariancy and homogeneity,   the function
$E= \tfrac{1}{2}F^2$  can be presented as
\begin{equation} \label{eq:F}
E= r^2 f(\theta).
\end{equation}
By the symmetry $(x_1,x_2,x_3)\mapsto  (x_1, -x_2, -x_3)$ for $E$, the function $f(\theta)$ on $(0,\pi)$ can be extended
to and will be viewed as an even positive smooth function on $\mathbb{R}$ with the period $2\pi$, i.e., the restriction of
$E$ to the circle $\{(x_1,x_2,0)\mid x_1= \cos s, x_2= \sin s, \forall s\in\mathbb{R}\}$.

Let us now  calculate the Hessian metric $g$ and the Cartan tensor
$C$ of $F$ in the spherical coordinates. We use  subscripts and superscripts $r$, $\theta$ and $\phi$,  for example, $g_{r\theta}=g(\tfrac{\partial}{\partial r},\tfrac{\partial}{\partial \theta})$, and
$C_{\theta\theta\phi}=C(\tfrac{\partial}{\partial \theta},\tfrac{\partial}{\partial \theta},\tfrac{\partial}{\partial \phi})$.

By its definition, $g$
is the second covariant derivative of $E$
with respect to the Levi-Civita connection of the standard flat metric on $\mathbb{R}^3$, so we have
\begin{equation}\label{0000}
g(X,Y)=X(Y(E))-(\tilde{\nabla}_XY)(E),
\end{equation}
for any smooth tangent vector fields $X$ and $Y$ on $\mathbb{R}^3\backslash\{0\}$, where $\tilde{\nabla}$ is the Levi-Civita connection for the standard flat
metric
$$\tilde g:= {\rm d}x_1^2+ {\rm d}x_2^2 + {\rm d}x_3^2=
{\rm d}r^2+r^2 {\rm d}\theta^2+r^2\sin^2\theta {\rm d}\phi^2.$$
Direct calculation  gives
\begin{equation}\label{0001}
\begin{array}{ll}
\tilde{\nabla}_{\tfrac{\partial}{\partial r}}{\tfrac{\partial}{\partial r}}
=0, &
\tilde{\nabla}_{\tfrac{\partial}{\partial \theta}}{\tfrac{\partial}{\partial r}}
=\tilde{\nabla}_{\tfrac{\partial}{\partial r}}{\tfrac{\partial}{\partial \theta}}
=\tfrac{1}{r}\tfrac{\partial}{\partial\theta},\\
\tilde{\nabla}_{\tfrac{\partial}{\partial \phi}}{\tfrac{\partial}{\partial r}}
=
\tilde{\nabla}_{\tfrac{\partial}{\partial r}}{\tfrac{\partial}{\partial \phi}}
=\tfrac{1}{r}\tfrac{\partial}{\partial\phi}, &
\tilde{\nabla}_{\tfrac{\partial}{\partial \theta}}{\tfrac{\partial}{\partial \theta}}
=-r\tfrac{\partial}{\partial\phi},\\
\tilde{\nabla}_{\tfrac{\partial}{\partial \theta}}{\tfrac{\partial}{\partial \phi}}
=
\tilde{\nabla}_{\tfrac{\partial}{\partial
\phi}}{\tfrac{\partial}{\partial \theta}}=\tfrac{\cos\theta}{\sin\theta}
\tfrac{\partial}{\partial\phi},
&
\tilde{\nabla}_{\tfrac{\partial}{\partial \phi}}{
\tfrac{\partial}{\partial \phi}}
=-r\sin^2\theta\tfrac{\partial}{\partial r}-\sin\theta\cos\theta
\tfrac{\partial}{\partial\theta}.
\end{array}
\end{equation}
Combining \eqref{eq:F} and  (\ref{0000}), we  obtain all components $g_{ab}$, $a,b\in\{r,\theta,\phi\}$, for  $g={\rm d}^2 E$.  With the specified order
$(r,\theta,\phi)$, they can be presented as the following matrix,
\begin{equation} \label{eq:g}
\left[ \begin {array}{ccc}  2 f (\theta) &r{
\frac {\rm d}{{\rm d}\theta }}f (\theta) &0
\\ \noalign{\medskip}
r{\frac{\rm d}{{\rm d}\theta }}f (\theta) & 2r ^{2}f(\theta)+r^2
\frac{{\rm d}^{2}}{{\rm d}\theta^{2}}f (\theta)
&0\\ \noalign{\medskip}
0&0&2r^2\sin^2\theta f(\theta)+r^2\sin\theta\cos\theta
\frac{\rm d}{{\rm d}\theta}f (\theta)\end {array}
 \right].
\end{equation}

For further use, let us observe that the matrix (\ref{eq:g})
is block diagonal, so its inverse matrix is block diagonal as well,
i.e., $g^{r\phi}=g^{\theta\phi}=0$ and $g^{\theta\theta}> 0$.
%Possibly, it is not necessary anymore.
%
%Insteas, "for further use let us observe that the matrix is blockdiagonal which implies that the
%componenet $g^{23}$ of the inverse matrix is  zero.

%The inverse matrix $(g^{ab})$ for $(g_{ab})$ in (\ref{eq:g}) can also be easily calculated. For example,
%\begin{equation}\label{eq:length}
%g^{\theta\theta}=\frac{2f(\theta)}{4r^2f(\theta)^2+2r^2f(\theta)
%\tfrac{{\rm d}}{{\rm d}\theta^2}f(\theta)-r^2\left(
%\tfrac{{\rm d}}{{\rm d}\theta}f(\theta)\right)^2}
%\end{equation}
%is a positive smooth function for all $\theta\in(0,\pi)$.

To calculate the Cartan tensor $C_{abc}$ with $a,b,c\in\{r,\theta,\phi\}$, we can proceed analogically:
\begin{equation}\label{0002}
C(X,Y,Z)=\tfrac{1}{2}\left(Z(g(X,Y))-g(\tilde{\nabla}_ZX,Y)
-g(X,\tilde{\nabla}_ZY)\right).
\end{equation}
Using (\ref{0001}) and (\ref{eq:g}), we see that the only possibly nonzero components of the Cartan tensor are
\begin{equation} \label{eq:CT1}
\begin{array}{rcl}C_{\theta\theta\theta} &=& 2r^2
\tfrac{{\rm d}}{{\rm d}\theta}f(\theta)+\tfrac12 r^2
\tfrac{{\rm d}^3}{{\rm d}\theta^3}f(\theta),\\
C_{\theta\phi\phi}&=&-\tfrac12 r^2\cos 2\theta\tfrac{{\rm d}}{{\rm d}\theta}f(\theta)+\tfrac14 r^2\sin2\theta
\tfrac{{\rm d}^2}{{\rm d}\theta^2}f(\theta)
\end{array}\end{equation}
(of course $C_{\theta\phi\phi}= C_{\phi\theta\phi}= C_{\phi\phi\theta}$ because $C$ is symmetric). Note that it is clear in advance that  every component of the form $C_{r\cdot\cdot}=C(\tfrac{\partial}{\partial r},\cdot,\cdot)$ is zero, since $\tfrac{\partial }{\partial r}$ is the Euler vector field annihilating $C$. It is also clear by Cartan's trick that the component  $C_{\theta\theta\phi}$ is zero since the mapping given by
 $\phi\mapsto -\phi + \textrm{const}$ is in fact a linear isometry which from one side  changes the sign for $C_{\theta\theta\phi}$ and from the other side  preserves it.

 In the case $n=3$, the only curvature component we need to consider  is
\begin{equation}\label{0003}
R_{\theta\phi\phi\theta}=g(R(\tfrac{\partial}{\partial \theta},\tfrac{\partial}{\partial\phi})\tfrac{\partial}{\partial\phi},
\tfrac{\partial}{\partial\theta})
=\sum_{a,b\in\{r,\theta,\phi\}}(C_{\theta\theta a}g^{ab}
C_{\phi\phi b}-C_{\theta\phi a}g^{ab}C_{\phi\theta b}).
\end{equation}
Plugging \eqref{eq:g} and  \eqref{eq:CT1} into \eqref{0003} and using  the vanishing of $g^{\theta\phi}$, $C_{r\cdot\cdot}$ and
$C_{\theta\theta\phi}$,
we get
\begin{eqnarray*}
R_{\theta\phi\phi\theta}=C_{\theta\theta\theta}g^{\theta\theta}
C_{\theta\phi\phi}-C_{\theta\phi\phi}g^{\phi\phi}
C_{\theta\phi\phi}.
\end{eqnarray*}
So the vanishing of the Riemann curvature implies \begin{equation}\label{0010}
C_{\theta\theta\theta}g^{\theta\theta}C_{\theta\phi\phi}
-C_{\theta\phi\phi}g^{\phi\phi}C_{\theta\phi\phi}=0.\end{equation}

Note that the $\theta$-derivative of $C_{\theta\phi\phi}$ is $\tfrac{1}{2} \sin 2\theta  \ C_{\theta\theta\theta}$. Indeed,
\begin{eqnarray*}
\tfrac{{\rm d}}{{\rm d}\theta}(\tfrac{C_{\theta\phi\phi}}{r^2})
&=&\tfrac{{\rm d}}{{\rm d}\theta}\left(-\tfrac12\cos 2\theta\tfrac{{\rm d}}{{\rm d}\theta}f(\theta)+\tfrac14 \sin2\theta
\tfrac{{\rm d}^2}{{\rm d}\theta^2}f(\theta)\right)\\
&=&\tfrac12\sin 2\theta\left(2\tfrac{{\rm d}}{{\rm d}\theta}f(\theta)+\tfrac12\tfrac{{\rm d}^3}{{\rm d}\theta^3}f(\theta)
\right)=\tfrac{1}{2}\sin2\theta\cdot\tfrac{C_{\theta\theta\theta}}{r^2}.
\end{eqnarray*}
Thus, $h(\theta)=\left(\tfrac{C_{\theta\phi\phi}}{r^2}\right)^2$
is the solution of the following ODE:
\begin{equation}\label{1012}
\tfrac{{\rm d}}{{\rm d}\theta}h(\theta)=\tfrac{ g^{\phi\phi}\sin2\theta}{g^{\theta\theta}}\cdot h(\theta)=\tfrac{\cos\theta
\left(4f(\theta)^2+2f(\theta)\tfrac{{\rm d}^2}{{\rm d}t^2}f(\theta)-(\tfrac{{\rm d}}{{\rm d}t}f(\theta))^2\right)}{f(\theta)\left(2\sin\theta f(\theta)+\cos\theta
\tfrac{{\rm d}}{{\rm d}t}f(\theta)\right)}\cdot h(\theta)
\end{equation}
on $(0,\pi)$.

From (\ref{eq:g}) we see that $g_{\phi\phi}$ at the points of $S_F$, i.e.,
when $r^2=\tfrac{1}{2f(\theta)}$, is given by
\begin{eqnarray*}
\sin^2\theta+\tfrac{\sin\theta\cos\theta}{2f(\theta)}
\tfrac{{\rm d}}{{\rm d}\theta}f(\theta).
\end{eqnarray*}
In particular, we have $g_{\phi\phi}=1$ at $x\in S_F$ with $\theta$-coordinate equal to $\pi/2$. So the $g$-arc length of the curve
 $\{\theta=\pi/2\}$ on $S_F$ is $2\pi$. When we identify $(S_F,g)$
with a standard $S^2(1)$, the $SO(2)$-action which shifts the
$\phi$-coordinates on $S_F$ coincides with a standard linear $SO(2)$-action on $S^2(1)$ which orbits are the latitude lines.
The curve $\theta=\pi/2$ on $S_F$ corresponds to the equator which has
the maximal length among all latitude lines. So we have
\begin{eqnarray*}
\tfrac{{\rm d}}{{\rm d}\theta}\left(g_{\phi\phi}{}_{|r=(2f(\theta))^{-1/2}}
\right)_{|\theta=\pi/2}
&=&\tfrac{{\rm d}}{{\rm d}\theta}\left(\sin^2+\tfrac{\sin\theta\cos\theta}{2f(\theta)}
\tfrac{{\rm d}}{{\rm d}\theta}f(\theta)\right)_{|\theta=\pi/2}\\
&=&
-\tfrac{1}{2f(\pi/2)}\tfrac{{\rm d}}{{\rm d}\theta}f(\theta)_{|\theta=\pi/2}=0,
\end{eqnarray*}
i.e., $\tfrac{{\rm d}}{{\rm d}\theta}f(\theta)_{|\theta=\pi/2}=0$.
Plugging  it into the formula of $C_{\theta\phi\phi}$ in (\ref{eq:CT1}),
we see $C_{\theta\phi\phi}=0$ when $\theta=\pi/2$ and then
$h(\pi/2)=0$. Thus,  $h(\theta)$  satisfies the ODE   (\ref{1012}) with the initial condition $h(\pi/2)=0$ so it is identically zero.  Hence the Cartan tensor of  $F$ vanishes identically which implies that the third partial derivatives of $F^2$  with respect to linear coordinates vanish   so $F$ is Euclidean. Lemma \ref{lem:final}  is proved for $n=3$.

%for all  $\theta\ne  \pi/2$  the condition \eqref{0010} is  equivalent to
%\begin{equation} \label{0020}
%\tfrac{\partial}{\partial\theta}(C_{\theta\phi\phi}^2)=0.
%\end{equation}
%This  implies that  $C_{\theta\phi\phi}$ is constant. As we explained in the proof of Lemma \ref{Lem:2},
%at the points $y^\pm_0\in S_F$  with $\theta= 0, \pi$,  the Cartan tensor vanishes; this implies that $C_{\theta\phi\phi}=0$ at such points so $C_{\theta\phi\phi}$ vanishs identically.  The  equation $C_{\theta\phi\phi}=0$ is equivalent in view of \eqref{eq:CT1} to
%\begin{equation} \label{eq:cur}
%2 \cos 2\theta\tfrac{{\rm d}}{{\rm d}\theta}f(\theta)=\sin2\theta
%\tfrac{{\rm d}^2}{{\rm d}\theta^2}f(\theta)\end{equation}  and can be easily solved: its solution
%  is $f(\theta)=c_1+c_2\cos 2\theta$ and corresponds to the  function
%$E=r^2f(\theta)=(c_1+c_2)x_1^2+(c_1-c_2)x_2^2+(c_1-c_2)x_3^2$. We see that $F$ is Euclidean as we claimed.
% Lemma \ref{lem:final} is proved  for  $n=3$.

\label{subsection-2-2}
\begin{remark} \label{Rem:0040}
The equality (\ref{0010}) follows from
(and is fact is equivalent to)
\begin{equation}\label{1011}
C_{\theta\theta\theta}g^{\theta\theta}-C_{\theta\phi\phi}g^{\phi\phi}=0
\end{equation}
everywhere on $\mathbb{R}^3\backslash\{0\}$. This is a 3rd order ODE for $f(\theta)$, and has a 3-parameter family of local solutions. Among these local solutions,  $c_1+c_2\cos 2\theta$ with appropriate constants $c_1$ and $c_2$ corresponds to the Euclidean norms. So we may generically perturb it among local solutions of (\ref{1011}), and use the resulting $f(\theta)$ to construct a flat Hessian metric $g={\rm d}^2 E$ for $E=r^2 f(\theta)$ in some
conic open subset of $\mathbb{R}^3\backslash\{0\}$. Local Hessian isometries can be constructed between $g$ and the Hessian metric for an Euclidean norm. These local Hessian isometries are not linear.
%
%
%Note that for $c_3\ne 0$ the solution can not define a Minkowski norm  since there always exist  $\theta_0\in (0,\pi)$ such that  $ \textrm{det}(g)=0$ which contradicts the strong convexity.  This also implies that `Asanov-type'  examples  have nonzero curvature (the curvature of $g_{|S_F}$ is constant but not equals to one), which was observes many times in the literature.
\end{remark}

Let us now  prove Lemma \ref{lem:final} when $n>3$. Let $y_0\neq0$ be any point fixed by the action of $O(n-1)$, and $V_0$
any 3-dimensional vector subspace containing $y_0$. We can find
an involution in $O(n-1)$, such that $V_0$ is its fixed point set. Indeed, we can find suitable orthonormal coordinates $(x_1,\cdots,x_n)$ on $\mathbb{R}^n$, such that
$V_0$ consists of all vectors $(x_1,x_2,x_3,0,\cdots,0)$ and $y_0$ is
presented by $(a_0,0,\cdots,0)$. Then $V_0$ is the fixed point set of
the mapping $(x_1,\cdots,x_n)\mapsto(x_1,x_2,x_3,-x_4,\cdots,-x_n)$
in $O(n-1)$.

The restriction $F_0=F_{|V_0}$ is invariant with respect to the standard block diagonal action of $O(2)=SO(1)\times O(2)$. Its Hessian metric
$g_0=\tfrac12{\rm d}^2F_0^2=g_{|V_0\backslash\{0\}}$ is flat because
it is the restriction of the  ambient metric $g$ which is   flat to a automatically  totally geodesic fixed points set.
Then, $F_0$ is Euclidean.
By the $O(n-1)$-invariancy of $F$, we see  that $F$ is
Euclidean as well.
\end{proof}

\subsection{Proof of Theorem \ref{thm:main1} for
 $2\leq k \leq n/2$. }
\label{subsection-2-3}

Assume now the Minkowski norm $F$  on $\mathbb{R}^n$
is invariant with respect to the standard block diagonal action on $O(k)\times O(n-k)$ with $2\leq k\leq n/2$.
We denote by $G_0$ the connected isometry group for $(\mathbb{R}^n\backslash\{0\},g)$ and for $(S_F,g_{|S_F})$.

We first  consider
the case when $(S_F, g_{|S_F})$ is a homogeneous Riemannian sphere.
As in the previous section, let us  apply Cartan's trick to prove that the
 Cartan tensor vanishes at the point
$
y_0= (a_0,0,...,0)\in S_F
$.
Let $v\in\mathbb{R}^n$ be any vector
contained in the tangent space $T_{y_0}S_F$, then its $x_1$-coordinate vanishes.
The linear isometry $
(x_1,...,x_n)\mapsto (x_1,-x_2,-x_3. ..., -x_n)
$
 in $O(k)\times O(n-k)$ fixes $y_0$ and its tangent map at $y_0$ sends $v\in T_{y_0}S_F$ to $-v$. It preserves the Cartan tensor, so we have
$$C(v, v, v)= C(-v, -v, -v)=-C(v,v,v)$$ at $y_0$ for each $v\in T_{y_0}S_F$, which implies $C=0$ there.

Using \eqref{eq:fact} and the same argument as for Lemma \ref{Lem:2}, we see $(S_F,g_{|S_F})$ has constant curvature 1 and
$(\mathbb{R}^n\backslash\{0\},g)$ is flat.
By $2\leq k\leq n/2$, we have $n\geq 4$, and the absolute 1-homogeneity for the $SO(k)\times SO(n-k)$-invariant Minkowski norm $F$. By Theorem
\ref{thm:Brickell}, we obtain that $F$ is an Euclidean norm, which ends the proof of Theorem \ref{thm:main1} when $(S_F,g_{|S_F})$ is a homogeneous Riemannian sphere.

Next, we consider the case when $(S_F,g_{|S_F})$ is not a homogeneous
Riemannian sphere. Since the $SO(k)\times SO(n-k)$-action on $S_F$
has  cohomogeneity one, $G_0$ must preserve each $SO(k)\times SO(n-k)$-orbit. Then the $G_0$-action maps normal geodesics on $(S_F,g_{|S_F})$ (i.e., geodesics on $(S_F,g_{|S_F})$ which are orthogonal to all the $SO(k)\times SO(n-k)$-orbits) to normal geodesics on $(S_F,g_{|S_F})$.
So each $\Phi\in G_0$ is determined by its restriction to any
principal orbit $\mathcal{O}=(SO(k)\times SO(n-k))\cdot x$, which results in an injective Lie group homomorphism from $G_0$ to the
isometry group for $(\mathcal{O},g_{|\mathcal{O}})$.

The restriction of the Hessian metric $g$ to
the principal orbit
\begin{eqnarray*}
\mathcal{O}&=&(SO(k)\times SO(n-k))\cdot x
=(SO(k)\times SO(n-k))/(SO(k-1)\times SO(n-k-1))\\
&=&
(SO(k)/SO(k-1))\times (SO(n-k)/SO(n-k-1))
\end{eqnarray*}
is isometric to the Riemannian product of two standard spheres,
with dimensions $k-1$ and $n-k-1$ respectively. The isometry group
for $(\mathcal{O},g)$ has the Lie algebra $so(k)\oplus so(n-k)$, so we have $\dim G_0\leq
\dim SO(k)\times SO(n-k)$. On the other hand $G_0$ contains all the linear $SO(k)\times SO(n-k)$-actions. Thus, we have $G_0=SO(k)\times SO(n-k)$ also in this case.
Theorem \ref{thm:main1} is proved.

 \section{Local Hessian isometry which maps  orbits to orbits}
 \label{section-3}

\subsection{Spherical coordinates presentation for local Hessian isometries}
\label{subsection-3-1}
Assume the integers $k$ and $n$ satisfy $n\geq 3$ and $1\leq k\leq n/2$.

The subgroup $O(k)\times O(n-k)$ of  $O(n)$, consisting of
$\mathrm{diag}(A,B)$ for all $A\in O(k)$ and $B\in O(n-k)$, has the
standard block diagonal action on the Euclidean $\mathbb{R}^n$
of column vectors, with respect to which we have the orthogonal linear decomposition $\mathbb{R}^n=V'\oplus V''$, where $V'$ and $V''$ are $k$- and $(n-k)$-dimensional $O(k)\times O(n-k)$-invariant subspaces respectively. For simplicity, if not otherwise specified, orbits are referred to $O(n-1)$-orbits (which are the same as $SO(1)\times O(n-1)$- and $SO(n-1)$-orbits) when $k=1$,
and $O(k)\times O(n-k)$-orbits (which are the same as $SO(k)\times SO(n-k)$- and $SO(k)\times O(n-k)$-orbits) when $k>1$.

With the marking point $y\in\mathbb{R}^n\backslash\{0\}$ fixed,
the orthonormal coordinates
$(x_1,\cdots,x_n)^T$ can and will be chosen such that
\begin{enumerate}
\item
$V'$ and $V''$ are represented by
$x_{k+1}=\cdots=x_n=0$ and $x_1=\cdots=x_k=0$ respectively;
\item
The marking point $y$ has coordinates $(y_1,0,\cdots,0,y_{k+1},0,\cdots,0)^T$ with $y_1\geq 0$ and $y_{k+1}\geq 0$.
\end{enumerate}
Denote by
\begin{eqnarray*}
S'&=&\{(x_1,\cdots,x_k)^T| x_1^2+\cdots+x_k^2=1\}\quad\mbox{and}\\
S''&=&\{(x_{k+1},\cdots,x_n)^T| x_{k+1}^2+\cdots+x_n^2=1\}
\end{eqnarray*}
the $(k-1)$- and $(n-k-1)$-dimensional standard unit spheres respectively.  Then we set the spherical coordinates as following.

If $k=1$, the spherical coordinates $(r,\theta,\xi)\in\mathbb{R}_{>0}\times
(0,\pi)\times S''$ are determined by
$$x_1=r\cos\theta\quad\mbox{and}\quad
(x_2,\cdots,x_n)^T=r\sin\theta\cdot \xi,$$
which are well defined on $\mathbb{R}^n\backslash V'$.
The action of $A\in O(n-1)$ (i.e., $\mathrm{diag}(1,A)\in SO(1)\times O(n-1)\subset O(n)$) fixes $r$ and $\theta$ and changes $\xi$ to $A\xi$.

If $k>1$, the spherical coordinates $(r,\theta,\xi',\xi'')\in
\mathbb{R}_{>0}\times (0,\pi/2)\times S'\times S''$ are determined by
$$(x_1,\cdots,x_k)^T=r\cos\theta\cdot \xi'
\quad\mbox{and}\quad
(x_{k+1},\cdots,x_n)^T=r\sin\theta\cdot \xi'',$$
which are well defined on $\mathbb{R}^n\backslash (V'\cup V'')$.
The action of $\mathrm{diag}(A',A'')\in O(k)\times O( n-k)$
fixes $r$ and $\theta$, and changes $\xi'$ and $\xi''$ to $A'\xi'$ and $A''\xi''$
respectively.

Let us now  consider two
$SO(k)\times SO(n-k)$-invariant  Minkowski norms
$F_1$ and $F_2$ on $\mathbb{R}^n$, and denote their
Hessian metrics by $g_1=g_1(\cdot,\cdot)$ and $g_1=g_2(\cdot,\cdot)$
respectively. To distinguish the different norms or Hessian metrics, we use
$t$ to denote the $\theta$-coordinate where $F_1$ or $g_1$ is concerned, but still call it the $\theta$-coordinate.
By the homogeneity and
$SO(k)\times O(n-k)$-invariancy,
$E_i=\tfrac12F_i^2$ can be presented by
spherical coordinates as
\begin{equation*}%\label{eq:E1E2}
E_1=r^2 f(t)\quad \mbox{and}\quad E_2=r^2 h(\theta)
\end{equation*}
respectively. Though $t$ and $\theta$ belongs to $(0,\pi)$ or $(0,\pi/2)$, $f(t)$ and $h(\theta)$  can be periodically extended
to  even positive smooth functions on $\mathbb{R}$, with the period $2\pi$ or $\pi$, when $k=1$ or $k>1$ respectively.

Without loss of generality, we will further assume $y\in S_{F_1}$.
The $SO(k)\times O(n-k)$-action on $(S_{F_i},g_{i})$
is of cohomogeneity one. The normal geodesics on $(S_{F_i},g_i)$ are those which intersect orbits orthogonally. Using
fixed point set technique and similar Cartan's trick as in the proof of Lemma \ref{Lem:2}, it is easy to see that around
any principal orbit, normal geodesics are characterized by the following equations for spherical coordinates, $\xi\equiv\mathrm{const}$ when $k=1$, or $(\xi',\xi'')\equiv\mathrm{const}$ when $k>1$.

Now we assume $y$ satisfies (\ref{eq:notzero}) in Theorem \ref{thm:mainA}, i.e.,
\begin{equation*}
g_1(v',v'')\neq0\mbox{ at }y,\quad\mbox{for some }v'\in V'\mbox{ and }v''\in V'',
\end{equation*}
Applying Cartan's trick to those
$\mathrm{diag}(\pm1,\cdots,\pm 1)\in O(k)\times O(n-k)$
which preserves $F_1$ and fix $y$,
we see easily
\begin{enumerate}
\item
When $k=1$, we have $y\notin V'$, and when $k>1$, $y\notin V'\cup V''$. So the spherical coordinates of $y$ are well defined.
\item
The Hessian matrix $(a_{ij})=(g_1(\tfrac{\partial}{\partial x_i},
\tfrac{\partial}{\partial x_j}))$ is blocked-diagonal. To be precise,
we have at $y$
\begin{equation}\label{017}
g_1(\tfrac{\partial}{\partial x_i},\tfrac{\partial}{\partial x_j})=0,\quad \mbox{when }i\neq j\mbox{ and } \{i,j\}\neq\{1,k+1\}.
\end{equation}
\end{enumerate}

Using the spherical coordinates, the assumption (\ref{eq:notzero}) can be interpreted as following.

\begin{lemma} \label{lemma-002}
The following statements are equivalent (no matter $k=1$ or $k>1$):
\begin{enumerate}
\item
The marking point $y\in\mathbb{R}^n\backslash\{0\}$ satisfies (\ref{eq:notzero});
\item We have $g_1(\tfrac{\partial}{\partial x_1},\tfrac{\partial}{\partial x_{k+1}})\neq0$ at $y$;
\item The $\theta$-coordinate of $y$ satisfies
\begin{equation}\label{005}
-\cos t\sin t\tfrac{{\rm d}^2}{{\rm d}t^2}f(t)+(\cos^2 t-\sin^2 t)
\tfrac{{\rm d}}{{\rm d}t}f(t)\neq0.
\end{equation}
\end{enumerate}
Furthermore, $F_1$ is not locally Euclidean around $y$ when $y$ satisfies (\ref{eq:notzero}).
\end{lemma}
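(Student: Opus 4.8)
The plan is to establish the cycle of implications (2)$\Rightarrow$(1), (1)$\Rightarrow$(2), and (2)$\Leftrightarrow$(3) (all asserted at the marking point $y$), and then to deduce the last sentence from (3). The implication (2)$\Rightarrow$(1) is trivial: one just takes $v'=\tfrac{\partial}{\partial x_1}\in V'$ and $v''=\tfrac{\partial}{\partial x_{k+1}}\in V''$. For (1)$\Rightarrow$(2) I would assume (1) and invoke the block-diagonality \eqref{017} of the Hessian matrix at $y$ (which Cartan's trick, applied to the sign-change matrices preserving $F_1$ and fixing $y$, yields once (1) is assumed): writing $v'=\sum_{i\le k}v'_i\tfrac{\partial}{\partial x_i}$, $v''=\sum_{j>k}v''_j\tfrac{\partial}{\partial x_j}$ and using \eqref{017}, one gets $g_1(v',v'')=v'_1 v''_{k+1}\,g_1(\tfrac{\partial}{\partial x_1},\tfrac{\partial}{\partial x_{k+1}})$ at $y$, so (1) forces $g_1(\tfrac{\partial}{\partial x_1},\tfrac{\partial}{\partial x_{k+1}})\ne0$ at $y$.

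For (2)$\Leftrightarrow$(3) --- the heart of the lemma --- I would compute the single mixed second derivative $g_1(\tfrac{\partial}{\partial x_1},\tfrac{\partial}{\partial x_{k+1}})=\tfrac{\partial^2 E_1}{\partial x_1\partial x_{k+1}}$ at $y$ directly from $E_1=r^2f(t)$, without computing the whole Hessian. When $k>1$, the evenness and $\pi$-periodicity of $f$ give $f(\pi-t)=f(t)$, so $E_1$ is a smooth function $\Psi(u,w)$ of $u=x_1^2+\cdots+x_k^2$ and $w=x_{k+1}^2+\cdots+x_n^2$, whence $\tfrac{\partial^2 E_1}{\partial x_1\partial x_{k+1}}=4x_1 x_{k+1}\,\Psi_{uw}$. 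When $k=1$ one cannot pass to $u=x_1^2$ (only $f(-t)=f(t)$ is available), so I would instead write $E_1=\widetilde\Psi(x_1,w)$ with $w=x_2^2+\cdots+x_n^2$, giving $\tfrac{\partial^2 E_1}{\partial x_1\partial x_2}=2x_2\,\widetilde\Psi_{x_1w}$. In either case one runs the chain rule through $t=t(u,w)$ (using $\cos^2 t=u/(u+w)$, resp.\ $\cos t=x_1/\sqrt{x_1^2+w}$) and evaluates at $y$, where the coordinates are adapted so that $x_1=r\cos t$ and $x_{k+1}=r\sin t$; the factors of $r$ cancel, as they must since the components of $g_1$ are $0$-homogeneous, and one obtains exactly
\begin{equation*}
g_1\!\left(\tfrac{\partial}{\partial x_1},\tfrac{\partial}{\partial x_{k+1}}\right)=-\cos t\sin t\,\tfrac{{\rm d}^2}{{\rm d}t^2}f(t)+(\cos^2 t-\sin^2 t)\,\tfrac{{\rm d}}{{\rm d}t}f(t)
\end{equation*}
at $y$, which is the left-hand side of \eqref{005}. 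Thus (2)$\Leftrightarrow$(3). This chain-rule computation, together with treating $k=1$ separately, is the only real work, and it is routine but must be done carefully.

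Finally, suppose $F_1$ were locally Euclidean around $y$, i.e.\ $E_1$ coincides on a conic neighbourhood $C(U)$ of $y$ with a positive-definite quadratic form $Q$. Since $E_1$ is $O(k)\times O(n-k)$-invariant (resp.\ $O(n-1)$-invariant when $k=1$) by Lemma~\ref{Lem:1}, it is constant along each orbit, so $Q$ is constant on the nonempty open intersection of $C(U)$ with each orbit; as the orbits are round spheres (or products of two round spheres), on which $Q$ restricts to a real-analytic function, $Q$ is then constant on every such orbit, hence $Q$ itself is invariant. Therefore $Q=c\,(x_1^2+\cdots+x_k^2)+c'\,(x_{k+1}^2+\cdots+x_n^2)$ (resp.\ $Q=c\,x_1^2+c'\,(x_2^2+\cdots+x_n^2)$ for $k=1$), so near the $\theta$-value $t_0$ of $y$ one has $f(t)=c\cos^2 t+c'\sin^2 t$; substituting this into the displayed formula (equivalently, into the left-hand side of \eqref{005}) gives $0$, contradicting (3), which holds at $y$ by the equivalences already established. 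Hence $F_1$ is not locally Euclidean around $y$. I expect the main obstacle to lie in this last local-to-global step (and in the $k=1$ edge case, where $y$ may lie in the hyperplane $x_1=0$, so that one must also exploit the invariance at nearby orbits); the rest is either the bookkeeping of (1)$\Leftrightarrow$(2) or the routine differentiation in (2)$\Leftrightarrow$(3).
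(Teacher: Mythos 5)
Your proposal is correct, and for the three equivalences it follows essentially the paper's route: (1)$\Leftrightarrow$(2) is exactly the paper's argument via the block-diagonality (\ref{017}) at $y$ (obtained, as you say, from Cartan's trick with the sign-change matrices fixing $y$), and your Cartesian chain-rule evaluation of $\tfrac{\partial^2 E_1}{\partial x_1\partial x_{k+1}}$ through $E_1=\Psi(u,w)$ (resp.\ $\widetilde\Psi(x_1,w)$ for $k=1$) is just a different bookkeeping of the same computation that the paper performs by restricting to the $3$-dimensional slice $V$, writing $\tfrac{\partial}{\partial x_1},\tfrac{\partial}{\partial x_{k+1}}$ in terms of $\tfrac{\partial}{\partial r},\tfrac{\partial}{\partial t}$ and reusing the metric components (\ref{eq:g}). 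The closed formula you assert as the outcome is the correct one (it agrees with the left-hand side of (\ref{005})), so the computation you defer is indeed routine.

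Where you genuinely diverge is the last assertion. The paper simply notes that the left-hand side of (\ref{005}) equals $-\tfrac{2}{r^2}C_{\theta\phi\phi}$ by (\ref{eq:CT1}), so (\ref{eq:notzero}) forces the Cartan tensor to be nonzero at $y$, and non-Euclideanity is immediate; your contradiction argument via invariant quadratic forms also works, but one step is glossed over: from $E_1=Q$ on a (not necessarily invariant) conic neighbourhood you only obtain that $Q$ is constant on the orbits meeting $C(U)$, and the jump ``hence $Q$ itself is invariant'' needs a word. For instance, constancy on those orbits gives $Q=E_1$ on the group-saturation $W$ of $C(U)$, so for each group element $g$ the quadratic polynomial $Q\circ g-Q$ vanishes on the open set $W$ and hence identically; this also disposes of the $k=1$ case with $y_1=0$, where, as you anticipate, the single orbit through $y$ does not kill the possible cross term $x_1L(x'')$ and nearby orbits must be used. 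This invariance step is essential in your route (an arbitrary quadratic form can very well have a nonzero mixed $x_1x_{k+1}$ coefficient), so it is worth writing out; the paper's Cartan-tensor identification avoids the issue altogether and is shorter.
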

\begin{proof}
Because of (\ref{017}) at $y$, (1) and (2) in Lemma \ref{lemma-002} are equivalent. Further discussion can be reduced to the 3-dimensional subspace $V$ given by
$x_2=\cdots=x_k=x_{k+2}=\cdots=x_{n-1}=0$. By similar calculation as for (\ref{eq:g}), we get
\begin{eqnarray*}
g_1(\tfrac{\partial}{\partial x_1},\tfrac{\partial}{\partial x_{k+1}})&=&\pm g_1(\cos t\tfrac{\partial}{\partial r}-\tfrac{1}{r}\sin t
\tfrac{\partial}{\partial t},\sin t\tfrac{\partial}{\partial r}+
\tfrac{\cos t}{r}\tfrac{\partial}{\partial t})\\
&=&\pm\left(-\cos t\sin t\tfrac{{\rm d}^2}{{\rm d}t^2}f(t)+(\cos^2t-\sin^2t)
\tfrac{{\rm d}}{{\rm d}t}f(t)\right).
\end{eqnarray*}
Then the equivalence between (2) and (3) in Lemma \ref{lemma-002} follows immediately. Finally, we compare (\ref{005}) with the formula for $C_{\theta\phi\phi}$ in (\ref{eq:CT1}), we see the Cartan tensor does not vanish at $y$ when (\ref{eq:notzero})
is satisfied. So $F_1$ is not locally Euclidean there.
\end{proof}

Let us   consider a local Hessian isometry $\Phi$ from $F_1$ to $F_2$ which is defined on an $SO(k)\times O(n-k)$-invariant
conic neighborhood of $y$, and maps orbits to orbits. Notice that
$\Phi$ satisfies the positive 1-homogeneity and preserves the norm.

The following spherical coordinates presentations of $\Phi$ are crucial for proving Theorem \ref{thm:mainA} and Theorem \ref{thm:mainB}.

\begin{lemma}\label{spherical-presenting-hessian-isometry-1}
When $k=1$, the local Hessian isometry $\Phi$ can be presented by
spherical coordinates as
\begin{equation}\label{presenting-1}
(r,t,\xi)\mapsto (\tfrac{f(t)^{1/2}}{h(\theta(t))^{1/2}}\cdot r,\theta(t),A\xi)
\end{equation}
in some $O(n-1)$-invariant conic neighborhood of $y$, where
$A\in O(n-1)$ and $\theta(t)$ is a smooth function with nonzero derivatives everywhere.
\end{lemma}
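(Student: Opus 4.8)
The plan is to reconstruct $\Phi$ from the constraints that it (i) is positively $1$-homogeneous, (ii) preserves the norm, (iii) maps $O(n-1)$-orbits to $O(n-1)$-orbits, and (iv) is an isometry between $g_1$ and $g_2$. First I would fix the spherical coordinates $(r,t,\xi)$ for $F_1$ on the domain and $(r,\theta,\zeta)$ for $F_2$ on the target, as set up above, and record that the $O(n-1)$-orbits are exactly the level sets $\{r=\mathrm{const},\, t=\mathrm{const}\}$ (resp. with $\theta$), each a round $(n-2)$-sphere of radius $r\sin t$ (resp. $r\sin\theta$) in the Euclidean sense. Since $\Phi$ sends orbits to orbits and is $1$-homogeneous, in these coordinates it must have the block form $(r,t,\xi)\mapsto(\rho(t)\,r,\ \theta(t),\ \Psi_t(\xi))$ for some functions $\rho,\theta$ of $t$ alone (the orbit through a point depends only on $t$, and homogeneity forces the $r$-scaling to be a function of $t$ times $r$) and a family of diffeomorphisms $\Psi_t$ of $S''$.

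Next I would pin down $\rho$. Because $\Phi$ preserves the norm, $F_2(\Phi(y))=F_1(y)$; writing $F_1^2=2r^2f(t)$ and $F_2^2=2r^2h(\theta)$ and evaluating along the ray through a generic point gives $\rho(t)^2\,h(\theta(t))=f(t)$, i.e. $\rho(t)=f(t)^{1/2}/h(\theta(t))^{1/2}$, which is the stated formula. This uses that $F_2\ge 0$ and $\rho>0$ (a $1$-homogeneous diffeomorphism cannot reverse rays), so the positive square roots are the right choice. It remains to show $\theta(t)$ is smooth with nowhere-vanishing derivative and that $\Psi_t\in O(n-1)$ is independent of $t$ — though the Lemma only asserts the former two together with $A=\Psi\in O(n-1)$, so I would focus on: $\theta$ is a smooth function of $t$ with $\theta'(t)\ne 0$ everywhere, and $\Psi_t$ is a constant orthogonal transformation $A$.

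For this I would use the isometry condition together with the cone structure $g=\mathrm{d}r^2+r^2 g_{|S_F}$. Restricting $\Phi$ to the indicatrices, $\Phi$ must be an isometry from $(S_{F_1},g_{1|S_{F_1}})$ onto $(S_{F_2},g_{2|S_{F_2}})$; in the cohomogeneity-one picture it maps normal geodesics (the curves $\xi\equiv\mathrm{const}$) to normal geodesics and orbits to orbits isometrically. Comparing the induced metrics on an orbit: the restriction of $g_i$ to the orbit at parameter value $t$ (resp. $\theta$) is a round metric whose radius is a specific function of $t$ (computable exactly as the $g_{\phi\phi}$ entry in \eqref{eq:g}), so the isometry $\Psi_t$ between two round spheres of equal radius must lie in $O(n-1)$, and smooth dependence of $\Phi$ forces $t\mapsto\Psi_t$ to be a smooth curve in $O(n-1)$; a holonomy/connectedness argument along the normal geodesics (the normal geodesic through a point is preserved, so $\Psi_t$ cannot move) shows $\Psi_t\equiv A$ is constant. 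Finally, $\theta(t)$ is smooth because $\Phi$ is smooth and the coordinate change is smooth away from the fixed lines; and $\theta'(t)\ne 0$ because $\Phi$ is a local diffeomorphism — a zero of $\theta'$ would make $d\Phi$ degenerate in the $t$-direction (the $\partial_t$ image would be a multiple of $\partial_r$), contradicting invertibility.

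The main obstacle I anticipate is the last point — rigorously excluding $\theta'(t)=0$ and establishing smoothness of $\theta$ and $\rho$ \emph{uniformly} including near the boundary values of the $t$-range, where the spherical coordinates degenerate along $V'$; one must either argue that the relevant domain stays in $\mathbb{R}^n\setminus V'$ (which Lemma~\ref{lemma-002} partly provides, since \eqref{eq:notzero} forces $y\notin V'$, and then shrink the neighbourhood), or analyze the behaviour of the smooth map $\Phi$ in the original Cartesian coordinates to transfer regularity back to $\theta$. The norm-preservation identity $\rho^2 h\circ\theta=f$ together with positivity of $f,h$ keeps $\rho$ smooth and positive once $\theta$ is smooth, so the crux is genuinely the regularity and nondegeneracy of $\theta$, for which invertibility of $d\Phi$ combined with its orbit-preserving block structure is the key input.
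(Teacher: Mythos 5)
Your proposal is correct and follows essentially the same route as the paper's proof: reduce to the indicatrix via homogeneity and orbit-preservation to get the block form, obtain the radial factor from norm preservation, identify the restriction to an orbit with an isometry of a round sphere (hence $A\in O(n-1)$), use that normal geodesics have constant $\xi$-coordinates to see $A$ is independent of $t$, and deduce $\theta'(t)\neq 0$ from the nondegeneracy of the Jacobian of $\Phi$. The regularity issue you flag near $V'$ is handled exactly as you suggest: since \eqref{eq:notzero} forces $y\notin V'$, the $O(n-1)$-invariant conic neighbourhood can be shrunk to lie in $\mathbb{R}^n\setminus V'$, where the spherical coordinates are smooth.
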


\begin{proof}
By the homogeneity of $\Phi$, to prove (\ref{presenting-1}), we only need to discuss $\Phi(x)$ for $x\in S_{F_1}$. When $x$ is sufficiently close to $y\in S_{F_1}$, $x\notin V'$, so its spherical coordinates
$(r,t,\xi)=((2f(t))^{-1/2},t,\xi)$ are well defined. Since $\Phi$ maps principal orbits on $S_{F_1}$ to
principal orbits on $S_{F_2}$, and  each principal orbit
is characterized by constant $\theta$-coordinates, we see that the $\theta$-coordinate of
$\Phi(x)$ only depends on $t$. So we may
denote it as $\theta(t)$, which smoothness is obvious.
Since $F_1(x)=F_2(\Phi(x))=1$, the $r$-coordinate of $\Phi(x)$
is $(2h(\theta(t)))^{-1/2}=\tfrac{f(t)^{1/2}}{h(\theta(t))^{1/2}}\cdot r$.

Denote $\mathcal{O}_1=O(n-1)\cdot x$ the principal orbit in
$S_{F_1}$ passing $x$. When endowed
with the Hessian metric, it is a homogeneous Riemannian sphere
$O(n-1)/O(n-2)$, which is isometric to a radius $R$ standard sphere (i.e., its perimeter is $2\pi R$ when $n=3$ or it has constant curvature $R^{-1}$ when $n>3$). For $\mathcal{O}_2=O(n-1)\cdot\Phi(x)$ in $S_{F_2}$, we have a similar
claim. Since
the local Hessian isometry $\Phi$ maps $\mathcal{O}_1$
onto $\mathcal{O}_2$, $(\mathcal{O}_2,g_2)$ is also isometric to a radius $R$ standard sphere.
Denote $g_{\mathrm{st}}$ the standard unit sphere metric on $S''$,
then the $O(n-1)$-equivariant diffeomorphism $\Phi_1:(\mathcal{O}_1,g_1)\rightarrow
(S'',R^2 g_{\mathrm{st}})$,
mapping $x'\in \mathcal{O}_1$ to its
$\xi$-coordinate, is an isometry.
Similarly, we have another homothetic correspondence $\Phi_2:(\mathcal{O}_2,g_2)\rightarrow(S'',R^2g_{\mathrm{st}})$.
The composition
$$\Psi=\Phi_2\circ\Phi\circ\Phi_1^{-1}:
(S'',R^2g_{\mathrm{st}})\rightarrow
(S'',R^2g_{\mathrm{st}}),$$
which characterizes how the local Hessian isometry $\Phi$ changes the $\xi$-coordinates, is an isometry. So $\Psi$ must be of the form
$\xi\mapsto A\xi$ for some $A\in O(n-1)$.

Since $\Phi$ maps orbits on $S_{F_1}$
to orbits on $S_{F_2}$, it also maps normal geodesics to normal geodesics. Normal geodesics have
constant $\xi$-coordinates around each principal orbit. So the matrix $A\in O(n-1)$ in the presentation of $\Psi$
does not depend on $t$.

 Above argument proves the spherical coordinates presentation of $\Phi$ in (\ref{presenting-1}). Then we prove $\theta(t)$ has nonzero derivatives everywhere.

 We use (\ref{presenting-1}) to calculate the tangent map $\Phi_*$ at $x$, which can be presented as
the following Jacobi matrix
$$\left(
    \begin{array}{ccc}
      \tfrac{f(t)^{1/2}}{h(\theta(t))^{1/2}} &
      \tfrac{h(\theta(t))-f(t)\tfrac{{\rm d}}{{\rm d}t}\theta(t)}{2f(t)(2h(\theta(t)))^{3/2}} & 0 \\
      0 & \tfrac{{\rm d}}{{\rm d}t}\theta(t) & 0 \\
      0 & 0 & A \\
    \end{array}
  \right).
$$
Since $\Phi$ is a local diffeomorphism, its Jacobi matrix must have
nonzero determinant, which requires $\tfrac{{\rm d}}{{\rm d}t}\theta(t)\neq0$.
\end{proof}

%When we prove Theorem \ref{thm:mainA} for $k=1$, we can change $\Phi$ by its composition with the action of $A^{-1}$, where $A$ is the orthogonal matrix in
%(\ref{presenting-1}), i.e. $\Phi$ has the following simplified presentation $(r,t,v)\mapsto (\tfrac{f(t)^{1/2}r}{h(\theta)^{1/2}},\theta(t),v)$.

\begin{lemma}\label{spherical-presenting-hessian-isometry-2}
When $k>1$, the local Hessian isometry $\Phi$ can be presented by
spherical coordinates either as
\begin{equation}\label{presenting-2}
(r,t,\xi',\xi'')\mapsto (\tfrac{f(t)^{1/2}}{h(\theta(t))^{1/2}}\cdot r,\theta(t),A'\xi',A''\xi'')
\end{equation}
or as
\begin{equation}\label{presenting-3}
(r,t,\xi',\xi'')\mapsto (\tfrac{f(t)^{1/2}}{h(\theta(t))^{1/2}}\cdot r,\theta(t),A''\xi'', A'\xi')
\end{equation}
in some $O(k)\times O(n-k)$-invariant conic neighborhood of $y$, where
$A'\in O(k)$, $A''\in O(n-k)$, $\theta(t)$ is a smooth function with nonzero derivatives everywhere, and (\ref{presenting-3}) may happen only when $n=2k$.
\end{lemma}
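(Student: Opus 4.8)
The plan is to mimic the proof of Lemma~\ref{spherical-presenting-hessian-isometry-1}, replacing the homogeneous Riemannian sphere there by a Riemannian product of two round spheres, and then to classify the isometries of such a product. By the positive $1$-homogeneity of $\Phi$ it suffices to describe $\Phi(x)$ for $x\in S_{F_1}$ in a small neighbourhood of $y$; since $y$ satisfies \eqref{eq:notzero}, Lemma~\ref{lemma-002} gives $y\notin V'\cup V''$, hence the same for all nearby $x$, so the spherical coordinates $(r,t,\xi',\xi'')$ are well defined there. As $\Phi$ maps orbits to orbits and each principal orbit is characterised by a constant value of the $\theta$-coordinate, the $\theta$-coordinate of $\Phi(x)$ depends only on $t$; denote it $\theta(t)$, which is clearly smooth. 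Since $\Phi$ preserves the norm, $F_2(\Phi(x))=F_1(x)=1$, so the $r$-coordinate of $\Phi(x)$ equals $h(\theta(t))^{-1/2}=\tfrac{f(t)^{1/2}}{h(\theta(t))^{1/2}}\,r$. This produces the first two components of \eqref{presenting-2} and \eqref{presenting-3}; it remains to analyse the action on $(\xi',\xi'')$ and to prove $\tfrac{{\rm d}}{{\rm d}t}\theta(t)\neq0$.

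Fix such an $x$ and set $\mathcal{O}_1=(O(k)\times O(n-k))\cdot x\subset S_{F_1}$ and $\mathcal{O}_2=(O(k)\times O(n-k))\cdot\Phi(x)\subset S_{F_2}$. As recalled in Section~\ref{subsection-2-3}, the restriction of an invariant Hessian metric to a principal orbit is isometric to a Riemannian product of two standard spheres of dimensions $k-1$ and $n-k-1$ (consistently with \eqref{017}, which shows that in the $(\xi',\xi'')$-coordinates there is no cross term). Thus the natural $O(k)\times O(n-k)$-equivariant coordinate maps $\Phi_i:\mathcal{O}_i\to S'\times S''$, sending a point to its $(\xi',\xi'')$-coordinates, turn $g_i|_{\mathcal{O}_i}$ into a product metric $(R_i')^2 g'_{\mathrm{st}}\oplus (R_i'')^2 g''_{\mathrm{st}}$ for suitable radii $R_i',R_i''>0$, where $g'_{\mathrm{st}},g''_{\mathrm{st}}$ are the standard unit metrics on $S',S''$. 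Because $\Phi$ is a Hessian isometry its restriction to $S_{F_1}$ is an isometry onto $S_{F_2}$ carrying $\mathcal{O}_1$ onto $\mathcal{O}_2$, so $\Psi:=\Phi_2\circ\Phi\circ\Phi_1^{-1}:S'\times S''\to S'\times S''$ is an isometry between these two product metrics, and it records exactly how $\Phi$ acts on $(\xi',\xi'')$.

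The core of the argument is the classification of $\Psi$. An isometry between Riemannian products of round spheres respects the de~Rham decomposition into its flat factor (a product of circles, occurring when $k=2$ and/or $n-k=2$) and its irreducible non-flat sphere factors, and on the irreducible part it must permute those factors, never interchanging two of different dimension. Carrying this through — in particular checking the low-dimensional cases where one or both factors are circles, and the flat $2$-torus that appears when $k=2$ and $n-k=2$, where one verifies directly that an isometry onto another rectangular flat torus is still a product of circle isometries (up to a swap for the square torus) — one obtains the dichotomy: either $\Psi(\xi',\xi'')=(A'\xi',A''\xi'')$ with $A'\in O(k)$, $A''\in O(n-k)$ (forcing $R_1'=R_2'$, $R_1''=R_2''$), or $\Psi(\xi',\xi'')=(A''\xi'',A'\xi')$ (forcing $R_1'=R_2''$, $R_1''=R_2'$), the second case requiring $\dim S'=\dim S''$, i.e.\ $n=2k$. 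Since $\Phi$ carries normal geodesics (those with $(\xi',\xi'')\equiv\mathrm{const}$) to normal geodesics and the neighbourhood of $y$ is connected, the matrices $A',A''$ and the choice of alternative are independent of $t$, exactly as in Lemma~\ref{spherical-presenting-hessian-isometry-1}; this gives \eqref{presenting-2} or \eqref{presenting-3} throughout an $O(k)\times O(n-k)$-invariant conic neighbourhood of $y$. Finally, computing the Jacobi matrix of $\Phi$ in the spherical coordinates yields a block-triangular matrix whose determinant is a nonzero multiple of $\tfrac{{\rm d}}{{\rm d}t}\theta(t)$; as $\Phi$ is a local diffeomorphism, $\tfrac{{\rm d}}{{\rm d}t}\theta(t)\neq0$ everywhere.

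I expect the main obstacle to be the isometry classification in the third paragraph: the generic situation ($k\ge 3$, $n-k\ge 3$, $n\neq 2k$) follows at once from uniqueness of the de~Rham decomposition, but one has to handle with care the cases in which a factor is a circle (hence flat), and especially the flat $2$-torus $S^1\times S^1$ arising for $k=2$, $n-k=2$: there the isometry group is larger, and one must see that every isometry between two such rectangular flat tori is nevertheless of product form (or, for the square torus, a product composed with a factor swap), so that it is still captured by a pair $A'\in O(2)$, $A''\in O(2)$ as in \eqref{presenting-2}/\eqref{presenting-3}.
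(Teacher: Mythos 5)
Your proposal is correct and follows essentially the same route as the paper: reduce to the indicatrix, use orbit-preservation and norm-preservation to pin down the $r$- and $\theta$-components, identify the induced map $\Psi=\Phi_2\circ\Phi\circ\Phi_1^{-1}$ as an isometry between Riemannian products of round spheres, deduce the product-or-swap dichotomy (swap only when $n=2k$), use normal geodesics to make $A'$, $A''$ and the choice of alternative independent of $t$, and get $\tfrac{{\rm d}}{{\rm d}t}\theta(t)\neq0$ from the Jacobian. The only difference is that you spell out the classification of isometries of $S'\times S''$ (de Rham splitting plus the circle and flat-torus cases for $k=2$, $n-k=2$), which the paper simply invokes as known, and you use the normal-geodesic/connectedness argument in place of the paper's homotopy-class remark to fix the alternative; both are valid.
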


\begin{proof} We only need to discuss the spherical coordinates of
$\Phi(x)$ for $x\in S_{F_1}$ sufficiently close to $y$.
Denote the orbits
\begin{eqnarray*}
& &\mathcal{O}_1=(O(k)\times O(n-k))\cdot x,\
\mathcal{O}'_1=(O(k)\times\{e\})\cdot x,\
\mathcal{O}''_1=(\{e\}\times O(n-k))\cdot x,\\
& &\mathcal{O}_2=(O(k)\times O(n-k))\cdot\Phi(x),\
\mathcal{O}'_2=(O(k)\times\{e\})\cdot \Phi(x),\
\mathcal{O}''_2=(\{e\}\times O(n-k))\cdot \Phi(x).
\end{eqnarray*}
When endowed with the restriction of $g_1$,
$\mathcal{O}_1=(O(k)\times O(n-k))/(O(k-1)\times O(n-k-1))$ is
the Riemannian product of the two homogeneous Riemannian spheres,
i.e., $\mathcal{O}'_1=O(k)/O(k-1)$, which is isometric to a radius $R'_1$ standard sphere, and $\mathcal{O}''_1=O(n-k)/O(n-k-1)$, which is isometric to a radius $R''_1$ standard sphere.
Denote $g'_{\mathrm{st}}$ and $g''_{\mathrm{st}}$ the standard unit sphere metrics on
$S'$ and $S''$ respectively, and $g_{R'_1,R''_1}$ the product metric of ${R'_1}^2g'_{\mathrm{st}}$ and
${R''_1}^2g''_{\mathrm{st}}$ on $S'\times S''$. Then the $O(k)\times O(n-k)$-equivariant diffeomorphism
$\Phi_1:(\mathcal{O}_1,g_1)\rightarrow
(S'\times S'',g_{R'_1,R''_1})$ is an isometry.
Similarly, $(\mathcal{O}'_2,g_2)$ and $(\mathcal{O}''_2,g_2)$ are isometric
to standard spheres with radii $R'_2$ and $R''_2$ respectively, and
we have another isometry
$$\Phi_2:(\mathcal{O}_2,g_2)\rightarrow
(S'\times S'',g_{R'_2,R''_2}).$$ Since the local Hessian isometry $\Phi$ maps
$\mathcal{O}_1$ onto $\mathcal{O}_2$,
the composition
$$\Psi=\Phi_2\circ\Phi\circ\Phi_1^{-1}:(S'\times S'',g_{R'_1,R''_1})\rightarrow (S'\times S'',g_{R'_2,R''_2}),$$
which characterizes how the local isometry $\Phi$ changes the $\xi'$- and $\xi''$-coordinates, is an isometry. The isometries on the Riemannian product of two standard spheres are completely known. There are two possibilities:
\begin{enumerate}
\item $\Psi(\xi',\xi'')=(A'\xi',A''\xi'')$ for some $A'\in O(k)$ and $A''\in O(n-k)$, $R'_1=R'_2$ and $R'_2=R'_1$.
\item $n=2k$, $\Psi(\xi',\xi'')=(A''\xi'',A'\xi')$ for some $A',A''\in O(k)$,
$R'_1=R''_2$ and $R'_2=R''_1$.
\end{enumerate}
For each possibility, $\Psi$ represents a distinct homotopy class, which
does not change when we move $x$ continuously. Further more,
$A'$ and $A''$ in the presentation of $\Psi$ are independent of $t$, because $\Phi$ maps normal geodesics on $S_{F_1}$ to those on $S_{F_2}$, and normal geodesics on $S_{F_i}$ have constant $\xi'$- and $\xi''$-coordinates.

The remaining  arguments are similar to those for Lemma \ref{spherical-presenting-hessian-isometry-1}, so we skip them.\end{proof}

\subsection{Equivariant Hessian isometries}

Analyse the spherical coordinates presentations (\ref{presenting-1}), (\ref{presenting-2}) and (\ref{presenting-3}) in Lemma \ref{spherical-presenting-hessian-isometry-1} and Lemma \ref{spherical-presenting-hessian-isometry-2}, we see immediately that a local Hessian isometry $\Phi$ mapping orbits to orbits can be decomposed as $\Phi=\Phi_1\circ\Phi_2$, in which $\Phi_1$ is a linear isometry mapping orbits to orbits, and $\Phi_2$ is a local Hessian isometry fixing all $\xi$-coordinates when $k=1$, or fixing all $\xi'$- and $\xi''$-coordinates when $k>1$. For example, when
$n=2k$ and $\Phi$ is presented by spherical coordinates as in (\ref{presenting-3}),
i.e. $(r,t,\xi',\xi'')\mapsto (\tfrac{f(t)^{1/2}r}{h(\theta(t))^{1/2}},A''\xi'',A'\xi')$, $\Phi_1$
is the action of $\left(
                 \begin{array}{cc}
                   0 & A'' \\
                   A' & 0 \\
                 \end{array}
               \right)
 $ in $O(n)$. It maps orbits to orbits, exchanging the curvature constants of the two product factors in the orbit, and it induces a new $O(k)\times O(n-k)$ norm $F_3=F_2\circ\Phi_1$.
The composition $\Phi_2=\Phi_1^{-1}\circ\Phi$ is local Hessian isometry between
from $F_1$ to $F_3$ fixing $\xi'$- and $\xi''$-coordinates.

For simplicity, we call $\Phi$ {\it equivariant}
if it equivariant with respect to the $O(n-1)$-action or the $O(k)\times
O(n-k)$-action when $k=1$ or $k>1$ respectively. Practically, we will only use those equivariant $\Phi$ which
fix all $\xi$-coordinates or all $\xi'$- and $\xi''$-coordinates.

%\begin{remark}
%When the Hessian isometry $\Phi$ is globally defined on
%$\mathbb{R}^n\backslash\{0\}$,
%the presentations (\ref{presenting-1}), (\ref{presenting-2}) and (\ref{presenting-3}), and above observations for the decomposition of $\Phi$ are still valid. So we
%can summarize them to the following theorem.
%\end{remark}

Summarizing above observations, we have the following theorem.

\begin{theorem}\label{thm-decomposition}
Any local Hessian isometry $\Phi$ between two $SO(k)\times SO(n-k)$-invariant Minkowski norms with $n\geq 3$ and $1\leq k\leq n/2$ which maps orbits to orbits can be decomposed as $\Phi=\Phi_1\circ\Phi_2$, in which $\Phi_1$ is a linear isometry and $\Phi_2$ is an equivariant local Hessian isometry fixing all the $\xi$-coordinates or all the
$\xi_1$- and $\xi_2$-coordinates.
\end{theorem}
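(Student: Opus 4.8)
The plan is to extract the decomposition directly from the two normal-form lemmas, Lemma~\ref{spherical-presenting-hessian-isometry-1} (the case $k=1$) and Lemma~\ref{spherical-presenting-hessian-isometry-2} (the case $k>1$). These lemmas already exhibit $\Phi$, in spherical coordinates adapted to the marking point, as $(r,t,\xi)\mapsto(\lambda(t)r,\theta(t),A\xi)$ with $A\in O(n-1)$ when $k=1$, and as $(r,t,\xi',\xi'')\mapsto(\lambda(t)r,\theta(t),A'\xi',A''\xi'')$ or $(r,t,\xi',\xi'')\mapsto(\lambda(t)r,\theta(t),A''\xi'',A'\xi')$ when $k>1$, the second (``swap'') form occurring only if $n=2k$. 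The point I would build on is the observation, already made in those lemmas, that the orthogonal matrix appearing there does not depend on $t$, so it defines an honest linear automorphism of $\mathbb{R}^n$.

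First I would let $\Phi_1$ be exactly this constant linear map: $\Phi_1=\mathrm{diag}(1,A)$ when $k=1$; $\Phi_1=\mathrm{diag}(A',A'')$ in the non-swap case when $k>1$; and $\Phi_1=\left(\begin{array}{cc} 0 & A'' \\ A' & 0 \end{array}\right)$ in the swap case, where $n=2k$ forces $A',A''\in O(k)$ and hence this matrix is orthogonal. In each case $\Phi_1\in O(n)$ normalises the block-diagonal subgroup $O(k)\times O(n-k)$ — it preserves the orbit foliation in the non-swap cases and interchanges the two sphere factors of each orbit in the swap case — so it maps orbits to orbits and is a linear isometry from $F_2$ onto the again $SO(k)\times SO(n-k)$-invariant norm $F_3:=F_2\circ\Phi_1$. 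Then I would set $\Phi_2:=\Phi_1^{-1}\circ\Phi$. As a composition of a local Hessian isometry with a linear isometry, $\Phi_2$ is itself a local Hessian isometry (now from $F_1$ to $F_3$) which still maps orbits to orbits; and by construction, in the spherical coordinates adapted to $F_3$, it takes the form $(r,t,\xi)\mapsto(\mu(t)r,\theta(t),\xi)$ (resp.\ $(r,t,\xi',\xi'')\mapsto(\mu(t)r,\theta(t),\xi',\xi'')$), so it fixes every $\xi$-coordinate (resp.\ all $\xi'$- and $\xi''$-coordinates) and is therefore equivariant. This gives the desired factorisation $\Phi=\Phi_1\circ\Phi_2$.

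The one place I expect to have to be careful is the swap case $n=2k$: one has to check that post-composing with the block-anti-diagonal $\Phi_1$ genuinely turns the factor-exchanging normal form of $\Phi$ into the identity on the pair $(\xi',\xi'')$ rather than merely producing another swap. This amounts to tracking how $\Phi_1$ transports the equivariant identifications $\mathcal{O}\cong S'\times S''$ of orbits with products of round spheres that were used in the proof of Lemma~\ref{spherical-presenting-hessian-isometry-2}, and it uses nothing beyond the classification of isometries of such products already invoked there; it is bookkeeping rather than a genuine difficulty. Everything else is purely formal: $\Phi_1$ is manufactured precisely to absorb the ``rotational part'' of $\Phi$, and $\Phi_2$ is whatever remains, so no further computation is needed.
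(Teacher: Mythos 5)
Your proposal is correct and follows essentially the same route as the paper: the authors likewise read off the constant orthogonal matrix ($\mathrm{diag}(1,A)$, $\mathrm{diag}(A',A'')$, or the block-anti-diagonal matrix in the $n=2k$ swap case) from the normal forms (\ref{presenting-1})--(\ref{presenting-3}), take it as $\Phi_1$, and set $\Phi_2=\Phi_1^{-1}\circ\Phi$, checking exactly as you indicate that in the swap case this turns the factor-exchanging form into one fixing all $\xi'$- and $\xi''$-coordinates (at the cost of replacing $\theta(t)$ by $\pi/2-\theta(t)$ and passing to $F_3=F_2\circ\Phi_1$). Nothing further is needed.
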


The following examples of global equivariant Hessian isometries are crucial
for the proofs of Theorem \ref{thm:mainA}.

\begin{example}\label{example-linear}
Let $F_1$ be any $SO(k)\times O(n-k)$-invariant
Minkowski norm on $\mathbb{R}^n$, and  $\Phi$ a linear map
\begin{equation}
(x_1,\cdots,x_k,x_{k+1},\cdots,x_n)\mapsto(ax_1,\cdots,ax_k,bx_{k+1},
\cdots,bx_{n}),
\end{equation}
with the parameter pair $(a,b)\in\mathbb{R}_{\neq0}\times\mathbb{R}_{>0}$ when $k=1$, or $(a,b)\in\mathbb{R}_{>0}\times\mathbb{R}_{>0}$ when $k>1$. Then $\Phi$
induces another $SO(k)\times O(n-k)$-invariant Minkowski norm $F_2=F_1\circ\Phi^{-1}$, such that
$\Phi$ is an equivariant Hessian isometry from $F_1$ to $F_2$ which fixes all $\xi$- or all $\xi'$- and $\xi''$-coordinates. We will simply call it the linear example with the parameter pair $(a,b)$.
\end{example}

If $k=1$, the function
$\theta(t)$ in the spherical coordinates presentation for the linear example with the parameter pair $(a,b)$ is
\begin{equation}
\label{theta-linear}
\theta(t)=\arccos\left(\tfrac{a\cos t}{(a^2\cos^2 t+b^2\sin^2 t)^{1/2}}\right),
 \end{equation}
for $t\in(0,\pi)$. It satisfies
\begin{equation}\label{ODE-1}
\tfrac{{\rm d}}{{\rm d}t}\theta(t)=\tfrac{\sin\theta(t)\cos\theta(t)}{\sin t\cos t},
\end{equation}
when $t\neq\pi/2$.

If $k>1$,  the function
$\theta(t)$ satisfies (\ref{theta-linear}) and (\ref{ODE-1})
for $t\in(0,\pi/2)$.

\begin{example}\label{example-Legendre}
Let $F_1$ be any $SO(k)\times O(n-k)$-invariant  Minkowski norm
on $\mathbb{R}^n$, and $\Phi:\mathbb{R}^n\backslash\{0\}\rightarrow
\mathbb{R}^n\backslash\{0\}$ the diffeomorphism
\begin{eqnarray}
(x_1,\cdots,x_k,x_{k+1},\cdots,x_n)
\mapsto (a\tfrac{\partial E_1}{\partial x_1},\cdots,a\tfrac{\partial E_1}{\partial x_k},
b\tfrac{\partial E_1}{\partial x_{k+1}}, \cdots,b\tfrac{\partial E_1}{\partial x_n}),
\end{eqnarray}where $E_1=\tfrac12F_1^2=r^2f(t)$ in spherical coordinates, and the requirement for the parameter pair $(a,b)$ is the same as in Example \ref{example-linear}. Then $\Phi$
induces another  $SO(k)\times O(n-k)$-invariant Minkowski norm $F_2=F_1\circ\Phi^{-1}$, such that
$\Phi$ is an equivariant Hessian isometry from $F_1$ to $F_2$ which
fixes $\xi$- or all $\xi'$- and $\xi''$-coordinates.
We will simply call it the Legendre example with the parameter pair $(a,b)$, because it is the composition between the Legendre
transformation of $F_1$, from $F_1$ to $\hat{F}_1$,
and a linear isometry from $\hat{F}_1$ to $F_2$.
\end{example}

If $k=1$,
the function $\theta(t)$ in the spherical coordinates presentation for the Legendre example with the parameter pair $(a,b)$ is
\begin{equation}
\label{theta-legendre}
\theta(t)=
\arccos\left(\tfrac{2\cos t f(t)-\sin t \tfrac{{\rm d}}{{\rm d}t}f(t)}{\left[4(\cos^2 t+\tfrac{b^2}{a^2}\sin^2 t)f(t)^2
+4(\tfrac{b^2}{a^2}-1)\cos t\sin t f(t)\tfrac{{\rm d}}{{\rm d}t}f(t)+
(\sin^2 t+\tfrac{b^2}{a^2}\cos^2 t)\left(\tfrac{{\rm d}}{{\rm d}t}f(t)\right)^2\right]^{1/2}}\right),
\end{equation}
for $t\in(0,\pi)$. It
satisfies
\begin{equation}\label{ODE-2}
\tfrac{{\rm d}}{{\rm d}t}\theta(t)=\tfrac{
\left(2f(t)\tfrac{{\rm d}^2}{{\rm d}t^2}f(t)-\left(\tfrac{{\rm d}}{{\rm d}t}f(t)\right)^2+4f(t)^2\right)\sin\theta(t)\cos\theta(t)}{
\left(\cos t \tfrac{{\rm d}}{{\rm d}t}f(t)+2\sin t f(t)\right)\left(-\sin t
\tfrac{{\rm d}}{{\rm d}t}f(t)+2\cos t f(t)\right)},
\end{equation}
when $\cos t \tfrac{{\rm d}}{{\rm d}t}f(t)+2\sin t f(t)\neq0$ and
$-\sin t\tfrac{{\rm d}}{{\rm d}t}f(t)+2\cos t f(t)\neq0$.

By the strong convexity of $F_1$, non-vanishing of $\cos t \tfrac{{\rm d}}{{\rm d}t}f(t)+2\sin t f(t)\neq0$ is always guaranteed for $t\in(0,\pi)$. In particular, when $n=3$, $\cos t \tfrac{{\rm d}}{{\rm d}t}f(t)+2\sin t f(t)\neq0$ is a product factor in $g_{\phi\phi}$ (see (\ref{eq:g})). Meanwhile, by the calculation
$$\tfrac{\partial E_1}{\partial x_1}=r\left(-\sin t
\tfrac{{\rm d}}{{\rm d}t}f(t)+2\cos t f(t)\right),$$
we see that $-\sin t
\tfrac{{\rm d}}{{\rm d}t}f(t)+2\cos t f(t)$ vanishes iff
$\tfrac{\partial E_1}{\partial x_1}=0$. By the strong convexity and $O(n-1)$-invariancy of $F_1$, the equation $-\sin t
\tfrac{{\rm d}}{{\rm d}t}f(t)+2\cos t f(t)=0$ has a unique solution $t'$
in $(0,\pi)$. In particular, when $f(t)\equiv f(\pi-t)$ for $t\in(0,\pi)$, $t'=\pi/2$.

If $k>1$, the corresponding function $\theta(t)$ satisfies (\ref{theta-legendre}) and (\ref{ODE-2}) for all $t\in(0,\pi/2)$.

\label{subsection-3-2}

\subsection{Proof of Theorem \ref{thm:mainA}: reduction to $n=3$}
\label{subsection-3-3}

In the following two subsections, we prove Theorem \ref{thm:mainA}. In this subsection,
we explain why and how we can reduce the proof to the case $n=3$. Then in the next subsection, we prove Theorem
\ref{thm:mainA} when $n=3$.

Let $C(U_1)$ be any $SO(k)\times SO(n-k)$-invariant connected conic open subset in $\mathbb{R}^n$ which satisfies (\ref{eq:notzero}) everywhere, and $\Phi$ a local Hessian isometry from $F_1$ to $F_2$ which is defined on $C(U_1)$ and maps orbits to orbits. By Theorem \ref{thm-decomposition}, we only need to prove Theorem \ref{thm:mainA} assuming that $\Phi$ fixes all $\xi$- or all $\xi'$- and $\xi''$-coordinates.
Then $\Phi$ preserves the 3-dimensional subspace $V$ given by $x_2=\cdots=x_k=x_{k+2}=\cdots=x_{n-1}=0$. Furthermore, when $k>1$, $\Phi$ preserves the subset $V_{x_1>0}\subset V$ with positive $x_1$-coordinates.
The restrictions ${F_i}_{|V}$ are Minkowski norms on $V$ which are invariant with respect to the subgroup $O(2)\subset O(n-1)$ fixing each point of $V^\perp$ given by $x_1=x_{k+1}=x_n=0$.
Denote $C(U'_1)$ the following $SO(2)$-invariant
connected conic open subset of $V$. When $k=1$, $C(U'_1)=C(U)\cap V$, and when $k>1$, $C(U'_1)=C(U_1)\cap V_{x_1>0}$. The restrictions ${g_i}_{|V}$ coincide with the Hessian metrics for ${F_i}_{|V}$, so
the restriction $\Phi_{|C(U'_1)}$ is a local Hessian isometry from ${F_1}_{|V}$ to ${F_2}_{|V}$. Each $SO(2)$-orbit in $C(U'_1)$ is the
intersection of an $SO(k)\times O(n-k)$-orbit with $V$ or $V_{x_1>0}$. So $\Phi_{|C(U'_1)}$ maps $O(2)$-orbits to $O(2)$-orbits.
By (\ref{017}) and Lemma \ref{lemma-002}, we have
$$g_1(\tfrac{\partial}{\partial x_{k+1}},\tfrac{\partial}{\partial x_{n}})=g_1(\tfrac{\partial}{\partial x_1},\tfrac{\partial}{\partial x_n})=0\mbox{ and }
g_1(\tfrac{\partial}{\partial x_1},\tfrac{\partial}{\partial x_k})\neq0$$
at any $y=(y_1,0,\cdots,0,y_{k+1},0,\cdots,0)\in C(U'_1)$.
So to summarize, we have\medskip

\noindent{\bf Observation 1:}\
$C(U'_1)$ and $\Phi_{|C(U'_1)}$  meet the requirements in Theorem \ref{thm:mainA} with $\mathbb{R}^n$ replaced by $V$, i.e., for the case $n=3$.
\medskip

Restricting to $V$, the following spherical $(r,\theta,\phi)$-coordinates are more convenient for calculation,
\begin{equation}\label{3d-spherical-coordinates}
x_1=r\cos\theta,\quad x_{k+1}=r\sin\theta\cos\phi,\quad x_n=r\sin\theta\sin\phi,
\end{equation}
with
$(r,\theta,\phi)\in\mathbb{R}_{>0}\times(0,\pi)\times (\mathbb{R}/(2\mathbb{Z}\pi))$.
Similarly, we use $t$ to denote the $\theta$-coordinate where ${F_1}_{|V}$ or
${g_1}_{|V}$ is concerned.
It is easy to check that $\Phi_{|C(U'_1)}$
fixes all $\phi$-coordinates.

When $k=1$, the $(r,\theta,\phi)$-coordinates are related to
the $(r,\theta,\xi)$-coordinates in Section \ref{subsection-3-1} by
$$(r,\theta,\phi)\leftrightarrow(r,\theta,\xi)=(r,\theta,(\cos\phi,0,\cdots,0,\sin\phi)^T).$$
 The functions $f(t)$, $h(\theta)$ and $\theta(t)$ in the
 $(r,\theta,\xi)$-coordinates presentation
 are completely inherited by the
 $(r,\theta,\phi)$-coordinates presentation when restricted to $V$, i.e.,
$${E_1}_{|V}=r^2 f(t),\quad {E_2}_{|V}=r^2 h(\theta), \quad \Phi_{|C(U'_1)}:(r,t,\phi)\mapsto(\tfrac{f(t)^{1/2}r}{h({\theta(t)})^{1/2}},
\theta(t),\phi).$$

When $k>1$, we can still use the spherical coordinates $(r,\theta,\phi)$ in (\ref{3d-spherical-coordinates}) on $V$, which
is related to the $(r,\theta,\xi',\xi'')$-coordinates by
\begin{eqnarray*}
& &(r,\theta,\phi)\leftrightarrow
(r,\theta,(1,0,\cdots,0)^T,(\cos\phi,0,\cdots,0,\sin\phi)^T),\quad
\forall\theta\in(0,\pi/2), \\
& &(r,\theta,\phi)\leftrightarrow
(r,\pi-\theta,(-1,0,\cdots,0)^T,(\cos\phi,0,\cdots,0,\sin\phi)^T),\quad
\forall\theta\in(\pi/2,\pi).
\end{eqnarray*}
Since in this case $C(U'_1)\subset V_{x_1>0}$ has positive $x_1$-coordinates, i.e.,
its $\theta$-coordinates range in $(0,\pi/2)$,
the functions
$f(t)$, $h(\theta)$ and $\theta(t)$  in the  $(r,\theta,\xi',\xi'')$-coordinates presentations, which are originally defined on $(0,\pi/2)$,
can still be applied to the discussion for
${\Phi}_{|C(U'_1)}$.
So to summarize, we have\medskip

\noindent{\bf Observation 2:}\
No matter $k=1$ or $k>1$, the functions $f(t)$, $h(\theta)$ and $\theta(t)$ in the spherical coordinates presentations
for the Minkowski norms $F_i$ and the local Hessian isometry $\Phi$
on $C(U_1)$
can be used to discuss the restriction $\Phi_{|C(U'_1)}$.
\medskip

We see from the next subsection, that the key steps in the proof, i.e., using the spherical $(r,\theta,\phi)$-coordinates to deduce and
analyse the ODE system for $\theta(t)$ and $h(\theta)$, and calculating the fundamental tensor for the linear and Legendre examples, are only relevant to the
$x_1$-, $x_{k+1}$- and $x_n$-coordinates. So they are all contained
in the proof of Theorem \ref{thm:mainA} when $n=3$. The functions $\theta(t)$ in (\ref{theta-linear}) and (\ref{theta-legendre}) for the linear example and Legendre example respectively are irrelevant to the dimension. So to summarize, we have\medskip

\noindent{\bf Conclusion:}
With some minor changes, the argument in the next subsection proves Theorem \ref{thm:mainA} generally.
\subsection{Proof of Theorem \ref{thm:mainA} when $n=3$}

Let $F_1$, $F_2$ be two Minkowski norms
on $\mathbb{R}^3$ which are invariant with respect to (the  same) standard  block diagonal action of $O(2)$ generated by the matrices of the form $\mathrm{diag}(1,A)$ with $A\in O(2)$. Their Hessian metrics are denoted as $g_1=g_1(\cdot,\cdot)$ and $g_2=g_2(\cdot,\cdot)$ respectively.

We fix the orthonormal coordinates $(x_1,x_2,x_3)$ such that
the $SO(2)$-action fixes each point on the line $V'$ presented by $x_2=x_3=0$ and rotates the plane $V''$ presented by $x_1=0$. We further require the marking point $y\in\mathbb{R}^3\backslash\{0\}$ has coordinates $(y_1,y_2,y_3)$ with $y_1\geq 0$, $y_2\geq 0$ and $y_3=0$.

In this subsection, we will only use the spherical coordinates
 $(r,\theta,\phi)\in\mathbb{R}_{>0}\times(0,\pi)\times
(\mathbb{R}/2\mathbb{Z}\pi)$ determined by
$$x_1=r\cos\theta,\quad x_2=r\sin\theta\cos\phi,\quad
x_3=r\sin\theta\sin\phi.$$
Then the $SO(2)$-action fixes $r$ and $\theta$ and shifts $\phi$.
We use $t$ to denote the $\theta$-coordinate and still call it the $\theta$-coordinate where $F_1$ or $g_1$ is concerned.

By the homogeneity and $SO(2)$-invariancy, $E_i=\tfrac12F_i^2$ can be presented
as
\begin{equation*}%\label{eq:F1F2}
E_1=r^2 {f(t)}\quad\mbox{and}\quad
E_2=r^2 {h(\theta)}
\end{equation*}respectively,
in which $f(t)$ and $h(\theta)$ are some even positive smooth functions on $\mathbb{R}$ with the period $2\pi$.

We have previously observed $y\notin V_1$, so we have $y_1\geq 0$ and $y_2>0$ for $y=(y_1,y_2,0)$, i.e., the $\theta$-coordinate of $y$ is contained in $(0,\pi/2]$, and the $\phi$-coordinate of $y$ is $0\in\mathbb{R}/(2\mathbb{Z}\pi)$. Without loss of generality, we assume $y\in S_{F_1}$. So its spherical coordinates can be presented as $(r_0,t_0,\phi_0)=(f(t_0)^{-1/2},t_0,0)$.
By (\ref{017}) and Lemma \ref{lemma-002}, we have the following at $y$:
\begin{eqnarray}
& &g_1(\tfrac{\partial}{\partial x_1},\tfrac{\partial}{\partial x_3})
=g_1(\tfrac{\partial}{\partial x_2},\tfrac{\partial}{\partial x_3})=0,\label{000}\\
& &g_1(\tfrac{\partial}{\partial x_1},\tfrac{\partial}{\partial x_2})\neq0,\label{019}\mbox{ and}\\
& &-\cos t_0\sin t_0\tfrac{{\rm d}^2}{{\rm d}t^2}f(t_0)+(\cos^2 t_0-\sin^2 t_0)\tfrac{{\rm d}}{{\rm d}t}f(t_0)\neq0.\label{018}
\end{eqnarray}

Let $\Phi$ be the local Hessian isometry from $F_1$ to $F_2$, which is defined on some $SO(2)$-invariant conic neighborhood of $y$ and maps orbits to orbits. By Theorem \ref{thm-decomposition} and
Lemma \ref{spherical-presenting-hessian-isometry-1}, we only need
to consider the situation that $\Phi$ fixes the $\phi$-coordinates and we can present it
by spherical coordinates as
\begin{equation}\label{020}
(r,t,\phi)\mapsto
(\tfrac{f(t)^{1/2}}{h(\theta(t))^{1/2}}\cdot r,\theta(t),\phi).
\end{equation}

We will first discuss the situation that $t_0\neq\pi/2,t'$, where
$t'$ is the unique solution of $\sin t \tfrac{{\rm d}}{{\rm d}t}f(t)-2\cos tf(t)=0$ in $(0,\pi)$.

Let $y(t)$ be a normal geodesic on $(S_{F_1},g_1)$ passing $y$, parametrized by the $\theta$-coordinate.
Using spherical coordinates, $y(t)$ can be locally presented as
$((2f(t))^{-1/2},t,0)$ around $y$, with its tangent vector field $\tfrac{{\rm d}}{{\rm d}t}y(t)=\tfrac{\partial}{\partial t}-\tfrac{1}{(2f(t))^{3/2}}\tfrac{{\rm d}}{{\rm d}t}f(t)\tfrac{\partial}{\partial r}$. By (\ref{eq:g}),
\begin{equation}\label{eq:theta1}
g_1(\tfrac{{\rm d}}{{\rm d}t}y(t),\tfrac{{\rm d}}{{\rm d}t}y(t))
=\tfrac{1}{2f(t)}\tfrac{{\rm d}^2}{{\rm d}t^2}f(t)-\tfrac{1}{4f(t)^2}\left(\tfrac{{\rm d}}{{\rm d}t}f(t)\right)^2+1.
\end{equation}
The $\Phi$-image $\gamma$ of the curve $y(t)$ is a curve on $S_{F_2}$ with constant $\phi$-coordinate $0$. When $\gamma=\gamma(\theta)$ is parametrized by the $\theta$-coordinate,
we similarly have
\begin{equation}\label{eq:theta1-1}
g_2(\tfrac{{\rm d}}{{\rm d}\theta}\gamma(\theta),
\tfrac{{\rm d}}{{\rm d}\theta}\gamma(\theta))
=\tfrac{1}{2h(\theta)}
\tfrac{{\rm d}^2}{{\rm d}\theta^2}h(\theta)-
\tfrac{1}{4h(\theta)^2}
\left(\tfrac{{\rm d}}{{\rm d}\theta}h(\theta)\right)^2+1.
\end{equation}
Since
$\Phi_*(\tfrac{{\rm d}}{{\rm d}t}y(t))=f'(t)\tfrac{{\rm d}}{{\rm d}\theta}\gamma(\theta(t))$, and $\Phi$ is a local isometry around $y=y(t_0)$,
we have
\begin{eqnarray}
& &\tfrac{1}{2f(t)}\tfrac{{\rm d}^2}{{\rm d}t^2}f(t)-\tfrac{1}{4f(t)^2}\left(\tfrac{{\rm d}}{{\rm d}t}f(t)\right)^2+1\nonumber\\\label{eq:t1}&=&
\left(\tfrac{{\rm d}}{{\rm d}t}\theta(t)\right)^2\cdot
\left(\tfrac{1}{2h(\theta)}
\tfrac{{\rm d}^2}{{\rm d}\theta^2}h(\theta(t))-
\tfrac{1}{4h(\theta)^2}
\left(\tfrac{{\rm d}}{{\rm d}\theta}h(\theta(t))\right)^2+1\right).
\end{eqnarray}

On the other hand, the equivariancy of $\Phi$ implies that  $\Phi_*(\tfrac{\partial}{\partial \phi})=\tfrac{\partial}{\partial \phi}$,
so by the isometric property of $\Phi$ and (\ref{eq:g}), we get
\begin{equation}\label{eq:t2}
\sin^2 t+\tfrac{\cos t\sin t}{2f(t)}\tfrac{{\rm d}}{{\rm d}t}f(t)
=\sin^2\theta(t)+
\tfrac{\cos \theta(t)\sin \theta(t)}{2h(\theta(t))}
\tfrac{{\rm d}}{{\rm d}\theta}h(\theta(t)).
\end{equation}

We view (\ref{eq:t1}) and (\ref{eq:t2}) as an ODE system
for the functions $\theta(t)$ and $h(\theta)$. We first determine $\theta(t)$. Rewrite (\ref{eq:t2}) as
\begin{equation}\label{006}
\tfrac{1}{h(\theta(t))}\tfrac{{\rm d}}{{\rm d}\theta}h(\theta(t))
=\left({2\sin^2 t+\tfrac{\cos t\sin t}{f(t)}\tfrac{{\rm d}}{{\rm d}t}f(t)}\right){\csc\theta(t)\sec\theta(t)}-2\tan\theta(t),
\end{equation}
and differentiate (\ref{006}) with respect to  $t$, we get
\begin{eqnarray}\label{007}
& &\tfrac{{\rm d}}{{\rm d}t}\theta(t)\cdot
\left(\tfrac{1}{h(\theta(t))}
\tfrac{{\rm d}^2}{{\rm d}\theta^2}h(\theta(t))-
\tfrac{1}{h(\theta(t))^2}\left(
\tfrac{{\rm d}}{{\rm d}\theta}h(\theta(t))
\right)^2
\right)\nonumber\\
&=&\tfrac{{\rm d}}{{\rm d}t}\theta(t)\cdot
{\left(2\sin^2 t+\tfrac{\cos t\sin t}{f(t)}
\tfrac{{\rm d}}{{\rm d}t}f(t)\right)
\left(\sec^2\theta(t)-\csc^2\theta(t)\right)}
-2\tfrac{{\rm d}}{{\rm d}t}\theta(t)\cdot\sec^2\theta(t)\nonumber\\
& &+\left({4\cos t\sin t+\tfrac{\cos^2t-\sin^2t}{f(t)}\tfrac{{\rm d}}{{\rm d}t}f(t)
-\tfrac{\cos t\sin t}{f(t)^2}\left(\tfrac{{\rm d}}{{\rm d}t}f(t)\right)^2
+\tfrac{\cos t\sin t}{f(t)}\tfrac{{\rm d}^2}{{\rm d}t^2}f(t)
}\right)\nonumber\\
& &\cdot{\csc\theta(t)\sec\theta(t)}.
\end{eqnarray}
We plug (\ref{006}) and (\ref{007}) into the right side of (\ref{eq:t1}) to erase $h(\theta(t))$ and its derivatives, then we get a formal quadratic equation for $\tfrac{{\rm d}}{{\rm d}t}\theta(t)$,
\begin{equation}\label{eq:square}
A\left(\tfrac{{\rm d}}{{\rm d}t}\theta(t)\right)^2+B\left(
\tfrac{{\rm d}}{{\rm d}t}\theta(t)\right)+C=0,
\end{equation}
in which
\begin{eqnarray*}
A&=&\frac{\cos t\sin t\left(\cos t \tfrac{{\rm d}}{{\rm d}t}f(t)+2\sin t f(t)\right)\left(\sin t \tfrac{{\rm d}}{{\rm d}t}f(t)-2\cos t f(t)\right)}{2f(t)^2\cos^2\theta(t)\sin^2\theta(t)},\\
B&=&\frac{\tfrac{\cos t\sin t}{f(t)}\tfrac{{\rm d}^2}{{\rm d}t^2}f(t)-\tfrac{\cos t\sin t}{f(t)^2}\left(\tfrac{{\rm d}}{{\rm d}t}f(t)\right)^2+\tfrac{\cos^2t-\sin^2t}{f(t)}\tfrac{{\rm d}}{{\rm d}t}f(t)
+4\cos t\sin t
}{\cos\theta(t)\sin\theta(t)},\\
C&=&-\tfrac{1}{f(t)}\tfrac{{\rm d}^2}{{\rm d}t^2}f(t)
+\tfrac{1}{2f(t)^2}\left(\tfrac{{\rm d}}{{\rm d}t}f(t)\right)^2-2.
\end{eqnarray*}

By (\ref{eq:t2}), $\theta_0=\theta(t_0)\in(0,\pi)$ equals $\pi/2$ iff
$t_0=\pi/2$ or $t'$, which has been excluded. So the denominators
in above calculation do not vanish. Meanwhile, we see the coefficient $A$ in (\ref{eq:square}) does not vanish for each value of $t$ (when it is sufficiently close to $t_0$).

Direct calculation shows that for each $t$, the two solutions of
(\ref{eq:square}) are
\begin{equation}\label{two-solutions}
\frac{\cos\theta(t)\sin\theta(t)}{\cos t\sin t}\quad\mbox{and}\quad
\tfrac{\left(-2f(t)\tfrac{{\rm {\rm d}^2}}{{\rm d}t^2}f(t)+\left(\tfrac{{\rm d}}{{\rm d}t}f(t)\right)^2-4f(t)^2\right)\cos\theta(t)\sin\theta(t)}{
\left(\cos t\tfrac{{\rm d}}{{\rm d}t}f(t)+2\sin t f(t)\right)\left(\sin t\tfrac{{\rm d}}{{\rm d}t}f(t)-2\cos t f(t)\right)}.
\end{equation}
The discriminant of (\ref{eq:square}) is
\begin{equation}\label{discriminant}
B^2-4AC=\left(\frac{\cos t\sin t \tfrac{{\rm d}^2}{{\rm d}t^2}f(t)+(\sin^2 t-\cos^2t)\tfrac{{\rm d}^2}{{\rm d}t}f(t)}{\cos\theta(t)\sin\theta(t)}\right)^2.
\end{equation}
By the inequality (\ref{018}), the discriminant is strictly positive when $t=t_0$. By continuity, we have immediately the
following lemma.
\begin{lemma} \label{lemma-000} Assume $t_0\in (0,\pi)\backslash\{\pi/2,t'\}$ satisfies (\ref{018}),
then one of the following
two cases must happen:
\begin{enumerate}
\item For all $t$ sufficiently close to $t_0$, we have
\begin{equation}\label{008}
\tfrac{{\rm d}}{{\rm d}t}\theta(t)=\tfrac{\cos\theta(t)\sin\theta(t)}{\cos t\sin t};
\end{equation}
\item For all $t$ sufficiently close to $t_0$, we have
\begin{equation}\label{009}
\tfrac{{\rm d}}{{\rm d}t}\theta(t)=\tfrac{\left(-2f(t)\tfrac{{\rm {\rm d}^2}}{{\rm d}t^2}f(t)+\left(\tfrac{{\rm d}}{{\rm d}t}f(t)\right)^2-4f(t)^2\right)\cos\theta(t)\sin\theta(t)}{
\left(\cos t\tfrac{{\rm d}}{{\rm d}t}f(t)+2\sin t f(t)\right)\left(\sin t\tfrac{{\rm d}}{{\rm d}t}f(t)-2\cos t f(t)\right)}.
\end{equation}
\end{enumerate}
\end{lemma}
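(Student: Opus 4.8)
The plan is to turn the algebra already in place into a short connectedness argument. By Lemma \ref{spherical-presenting-hessian-isometry-1} the function $\theta(t)$ is smooth, so $\tfrac{{\rm d}}{{\rm d}t}\theta(t)$ is continuous near $t_0$; and we have just seen that at every $t$ close to $t_0$ it solves the quadratic \eqref{eq:square} whose leading coefficient $A$ is nonzero there. Denote by $\rho_1(t)$ and $\rho_2(t)$ the two roots exhibited in \eqref{two-solutions}. Since $A$, $B$, $C$ and $\theta(t)$ all depend continuously on $t$ near $t_0$, the functions $\rho_1$ and $\rho_2$ are continuous on some open interval $I\ni t_0$, and $\tfrac{{\rm d}}{{\rm d}t}\theta(t)\in\{\rho_1(t),\rho_2(t)\}$ for every $t\in I$.

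First I would show that the two roots stay distinct near $t_0$, i.e. shrink $I$ so that the discriminant $B^2-4AC$ is strictly positive on all of $I$. By \eqref{discriminant} this discriminant is a perfect square whose pre-squared numerator is $\cos t\sin t\,\tfrac{{\rm d}^2}{{\rm d}t^2}f(t)+(\sin^2 t-\cos^2 t)\tfrac{{\rm d}}{{\rm d}t}f(t)$; it is nonzero at $t_0$ by the hypothesis \eqref{018}. The denominator $\cos\theta(t)\sin\theta(t)$ is also nonzero at $t_0$: indeed $\theta_0\in(0,\pi)$ gives $\sin\theta_0>0$, and by \eqref{eq:t2} the value $\theta_0=\pi/2$ would force $t_0=\pi/2$ or $t_0=t'$, both excluded, so $\cos\theta_0\neq 0$. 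Hence $B^2-4AC>0$ at $t_0$, and by continuity on a possibly smaller interval $I$; equivalently $\rho_1(t)\neq\rho_2(t)$ for all $t\in I$.

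The conclusion is then the usual locally-constant argument. Put $I_1=\{t\in I:\tfrac{{\rm d}}{{\rm d}t}\theta(t)=\rho_1(t)\}$ and $I_2=\{t\in I:\tfrac{{\rm d}}{{\rm d}t}\theta(t)=\rho_2(t)\}$; each is closed in $I$ as the zero set of a continuous function, $I_1\cup I_2=I$ because $\tfrac{{\rm d}}{{\rm d}t}\theta(t)$ solves \eqref{eq:square}, and $I_1\cap I_2=\emptyset$ because the two roots are distinct throughout $I$. Since $I$ is connected, one of $I_1$, $I_2$ is empty, which is precisely the dichotomy between \eqref{008} and \eqref{009} — the latter obtained from the second root in \eqref{two-solutions} after cancelling the common factor $\cos\theta(t)\sin\theta(t)$, nonzero on $I$ as noted.

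I expect the only delicate point to be the bookkeeping of nonvanishing denominators on $I$: besides $\sin\theta(t)\cos\theta(t)\neq 0$, one also needs $\cos t\,\tfrac{{\rm d}}{{\rm d}t}f(t)+2\sin t f(t)\neq 0$ (true for all $t\in(0,\pi)$ by strong convexity of $F_1$) and $\sin t\,\tfrac{{\rm d}}{{\rm d}t}f(t)-2\cos t f(t)\neq 0$ near $t_0$ (true since $t_0\neq t'$, and $t'$ is the unique zero of this expression in $(0,\pi)$). Granting these, no estimates are needed; the substantive work — producing \eqref{eq:square} together with its explicit roots and discriminant — has already been carried out before the statement, so the proof of the lemma itself is purely topological.
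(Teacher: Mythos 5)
Your proposal is correct and is essentially the paper's own argument: the paper likewise notes that \eqref{018} (together with $\theta(t_0)\neq\pi/2$, which follows from $t_0\neq\pi/2,t'$ via \eqref{eq:t2}) makes the discriminant \eqref{discriminant} strictly positive at $t_0$ and then concludes ``by continuity'', which is exactly the disjoint-closed-sets/connectedness argument you spell out, including the nonvanishing of $A$ and of the relevant denominators. Only a trivial slip: no cancellation of $\cos\theta(t)\sin\theta(t)$ is needed, since the two roots in \eqref{two-solutions} are verbatim the right-hand sides of \eqref{008} and \eqref{009}.
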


%Now we are ready to prove Theorem \ref{thm:mainA} when  $n=3$, with the assumption $t_0\neq\pi/2,t'$.
Now we are ready to prove the following description for $\Phi$.

\begin{lemma}\label{thm:mainA-local}
Keep all above assumptions and notations for the $SO(2)$-invariant Minkowski norms $F_i$, the marking point $y\in\mathbb{R}^3\backslash\{0\}$ satisfying (\ref{eq:notzero}),
the local Hessian isometry $\Phi$ from $F_1$ to $F_2$ which is
defined around $y$, maps orbits to orbits and fixes all $\phi$-coordinates. Then there exists
a sufficiently small $SO(2)$-invariant conic open neighborhood $C(U_1)$
of $y$, such that either $\Phi_{|C(U_1)}$ coincides with
the restriction of a linear example, or it coincides with that of a Legendre example.
\end{lemma}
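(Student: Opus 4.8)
The plan is to read off the single function $\theta(t)$ that encodes $\Phi$ in the spherical presentation \eqref{020}, show that near $t_0$ it must coincide with the $\theta$-function of one of the model maps of Example \ref{example-linear} or Example \ref{example-Legendre}, and then upgrade this to the coincidence of $\Phi$ itself with (the restriction of) that model map. By Theorem \ref{thm-decomposition} and Lemma \ref{spherical-presenting-hessian-isometry-1} we may and do assume $\Phi$ fixes all $\phi$-coordinates and is given by \eqref{020}.

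\textbf{Generic case $t_0\notin\{\pi/2,t'\}$.} Here Lemma \ref{lemma-000} applies, so near $t_0$ the function $\theta(t)$ satisfies either \eqref{008} or \eqref{009}. If \eqref{008} holds, this ODE is separable and integrates to $\tan\theta(t)=c\tan t$, which is exactly the $\theta$-function \eqref{theta-linear} of the linear example (writing $M_{a,b}$ for its underlying linear map) with $(b/a)^2=c^2$; choosing parameters $(a,b)$ that realize $\theta(t_0)$ (with $a<0$ allowed when $\theta_0>\pi/2$) yields a linear example $\Phi_0\colon F_1\to F_3:=F_1\circ\Phi_0^{-1}$ whose $\theta$-function solves the same equation \eqref{008}$=$\eqref{ODE-1} with the same value at $t_0$. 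Since the right-hand side of \eqref{008} is smooth near $(t_0,\theta_0)$ (because $t_0,\theta_0\notin\tfrac{\pi}{2}\mathbb Z$, the latter by the remark preceding Lemma \ref{lemma-000}), uniqueness of solutions gives $\theta(t)\equiv\theta_{\Phi_0}(t)$ near $t_0$. If instead \eqref{009} holds one argues in the same way with \eqref{009}$=$\eqref{ODE-2} and \eqref{theta-legendre}: the parameter $b^2/a^2$ of a Legendre example is again determined by $\theta_0$, and uniqueness for \eqref{009} is available because $t_0\ne t'$ keeps $-\sin t\,f'(t)+2\cos t\,f(t)$ away from zero while $\cos t\,f'(t)+2\sin t\,f(t)$ never vanishes by strong convexity of $F_1$; so $\theta(t)\equiv\theta_{\Phi_0}(t)$ for the corresponding Legendre example $\Phi_0$.

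It remains to promote this. Let $h_3$ be the function of $F_3$. Equation \eqref{006} holds for $\Phi$, and the same derivation applied to the Hessian isometry $\Phi_0$ (which maps orbits to orbits, fixes $\phi$, and has the same $\theta$-function) gives \eqref{006} with $h$ replaced by $h_3$ along the same curve $t\mapsto\theta(t)$. Reading \eqref{006} as a linear ODE for $\ln h(\theta(t))$ in the monotone variable $\theta$, we conclude $h=\lambda h_3$ near $\theta_0$ for some constant $\lambda>0$. Substituting $\theta=\theta_{\Phi_0}$ and $h=\lambda h_3$ into \eqref{020} shows that on the $SO(k)\times SO(n-k)$-invariant conic neighborhood $\{|t-t_0|<\varepsilon\}$ one has $\Phi=S_{\lambda^{-1/2}}\circ\Phi_0$ with $S_\mu(x)=\mu x$. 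Since post-composing a linear (resp. Legendre) example with a radial scaling again produces a linear (resp. Legendre) example — $M_{a,b}$ becomes $M_{\mu a,\mu b}$, and $x\mapsto(a\,\partial_1E_1,\dots,b\,\partial_nE_1)$ becomes $x\mapsto(\mu a\,\partial_1E_1,\dots,\mu b\,\partial_nE_1)$, with the same parameter ratio — this finishes the generic case: $\Phi$ coincides on the neighborhood with the restriction of a linear example (when \eqref{008} held) or of a Legendre example (when \eqref{009} held).

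\textbf{Degenerate case $t_0\in\{\pi/2,t'\}$, and the main obstacle.} Because \eqref{018} is an open condition and $\{\pi/2,t'\}$ is finite, \eqref{018} holds on a whole interval $(t_0-\varepsilon,t_0+\varepsilon)$, so on the punctured interval the generic case already identifies $\Phi$, on each of the two one-sided conic pieces, with the restriction of a linear or a Legendre example. Two facts then control the situation at $t_0$: the $\theta$-coordinate of $y$ equals $\pi/2$ (as recorded after \eqref{discriminant}), and $\tfrac{\rm d}{{\rm d}t}\theta$ is nowhere zero (Lemma \ref{spherical-presenting-hessian-isometry-1}). The plan is: at $t_0=\pi/2$, inspect the one-sided limits of the two candidate right-hand sides \eqref{008}, \eqref{009}; the Legendre branch forces $\tfrac{\rm d}{{\rm d}t}\theta(\pi/2)=0$, which is impossible, so the branch is linear on both sides. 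At $t_0=t'$, resolve the $0/0$ in the right-hand side of \eqref{009} by L'Hôpital and match $\theta(t)$ to the appropriate Legendre example at a regular nearby value of $t$. In either situation the two one-sided model maps (which are determined by their $1$-jet at $y$) agree with the smooth map $\Phi$ on the respective sides, hence they coincide, and this common model map agrees with $\Phi$ throughout $(t_0-\varepsilon,t_0+\varepsilon)$, providing the required neighborhood $C(U_1)$. I expect the delicate point of the whole argument to be exactly this analysis at $t=\pi/2$ and $t=t'$, where the quadratic \eqref{eq:square} degenerates ($A$ vanishes and $\theta_0=\pi/2$) and several denominators in \eqref{006}, \eqref{two-solutions}, \eqref{discriminant} and \eqref{ODE-2} vanish simultaneously; the way out is to combine the openness of \eqref{018}, the nonvanishing of $\tfrac{\rm d}{{\rm d}t}\theta$, and the explicit one-sided expansions of the linear and Legendre examples.
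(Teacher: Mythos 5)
Your generic case ($t_0\notin\{\pi/2,t'\}$) is essentially the paper's own argument: identify $\tfrac{{\rm d}}{{\rm d}t}\theta$ with one of the two roots via Lemma \ref{lemma-000}, match the initial value with a suitable parameter pair in Example \ref{example-linear} or \ref{example-Legendre}, and recover $h$ from \eqref{eq:t2}/\eqref{006}; your reorganization (fixing the ratio $b/a$ first and absorbing the remaining constant by a radial scaling) is a harmless variant. The gap is in the degenerate case $t_0\in\{\pi/2,t'\}$, and it is a genuine one. Your argument there rests on the claim that $\theta_0=\theta(t_0)=\pi/2$ when $t_0\in\{\pi/2,t'\}$. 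But \eqref{eq:t2} at $t_0$ only gives $\cos\theta_0\left(\sin\theta_0\,\tfrac{{\rm d}}{{\rm d}\theta}h(\theta_0)-2\cos\theta_0\,h(\theta_0)\right)=0$, i.e.\ $\theta_0\in\{\pi/2,\theta'\}$, where $\theta'$ is the zero of $\tfrac{\partial E_2}{\partial x_1}$ on $S_{F_2}$; the paper's ``iff'' is only used (and only valid) in the direction $\theta_0=\pi/2\Rightarrow t_0\in\{\pi/2,t'\}$. The converse you invoke is false: a Legendre example with $t'\neq\pi/2$ is itself a smooth, orbit-preserving, $\phi$-fixing Hessian isometry with $\theta(\pi/2)=\arccos\bigl(-\tfrac{{\rm d}}{{\rm d}t}f(\pi/2)/\sqrt{\cdots}\bigr)\neq\pi/2$, so your conclusion ``the Legendre branch forces $\tfrac{{\rm d}}{{\rm d}t}\theta(\pi/2)=0$, hence the branch is linear on both sides'' is contradicted by this very example; symmetrically, the identity map shows that at $t_0=t'$ the branch need not be Legendre.

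Consequently the real difficulty of the degenerate case — ruling out that $\Phi$ is modelled by a linear example on one side of $t_0$ and by a Legendre example on the other (the two one-sided models could a priori be glued $C^\infty$-smoothly) — is not addressed by your proposal; your ``determined by the $1$-jet at $y$'' step presupposes the two sides are of the same type. The paper excludes the mixed case by a separate algebraic argument that you would need to supply: computing the fundamental tensor $(b_{ij})$ of $F_2$ at $\Phi(y)$ once through the linear model \eqref{010} and once through the Legendre model \eqref{011}, and using the block-diagonality \eqref{000} together with positivity of $a_{11},a_{22}$ to deduce $a_{12}=g_1(\tfrac{\partial}{\partial x_1},\tfrac{\partial}{\partial x_2})=0$ at $y$, contradicting \eqref{019}. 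Note this is exactly where the hypothesis \eqref{eq:notzero} \emph{at the point $y$ itself} (and not merely its openness nearby) enters; your degenerate-case analysis never uses it, which is a sign that something is missing. Without this (or an equivalent) exclusion of the mixed gluing, the lemma is not proved at $t_0=\pi/2$ or $t_0=t'$.
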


\begin{proof}
We  first prove Lemma \ref{thm:mainA-local}
with the assumption that the $\theta$-coordinate $t_0$ of $y$ satisfies $t_0\in(0,\pi)\backslash\{ \pi/2,t'\}$.

In each case of Lemma \ref{lemma-000}, the local Hessian isometry
$\Phi$ can be determined around $y$ for any given pair of
$\theta_0=\theta(t_0)\neq\pi/2$ and $h_0=h(\theta_0)>0$.
For example, in the case (1), we can use the ODE (\ref{008}) and its initial value condition  $\theta(t_0)=\theta_0$ to uniquely determine the function $\theta(t)$, and then use the ODE (\ref{eq:t2}) and its initial value condition $h(\theta_0)=h_0$
to uniquely determine $h(\theta)$. Then $\Phi$ is determined by
(\ref{020}) around $y$. Meanwhile, we see the ODE (\ref{008}) coincides with (\ref{ODE-1}), i.e., it is satisfied by the linear examples in Example \ref{example-linear}. With the parameter pair $(a,b)$ suitably chosen, both
initial value conditions can be met. So in this case, $\Phi$ is a linear isometry in some $SO(2)$-invariant conic neighborhood of $y$. In the case (2), the ODE (\ref{009}) coincides with (\ref{ODE-2}), i.e., it is satisfied by the Legendre examples in Example
\ref{example-Legendre}. We can suitably choose the parameter pair $(a,b)$ to meet
both initial value conditions. So in this case,
$\Phi$ coincides with a Legendre example in some $SO(2)$-invariant conic neighborhood of $y$.
%This ends the proof of Theorem \ref{thm:mainA} when $k=1$ and $n=3$, with the assumption $\theta_0\neq\pi/2,t'$.

Let us now prove Lemma \ref{thm:mainA-local} when $t_0=\pi/2$ or $t'$.

By (\ref{005}), for $t\neq t_0$ sufficiently close to $t_0$, we have $t\neq \pi/2,t'$ and
$$(\cos^2t-\sin^2t)\tfrac{{\rm d}}{{\rm d}t}f(t)-\cos t\sin t
\tfrac{{\rm d}^2}{{\rm d}t^2}f(t)\neq0.$$
Previous arguments indicate $\Phi$ is either a linear example
 or a Legendre example, when restricted to each side $t<t_0$ and $t>t_0$ respectively. When the restrictions of $\Phi$ to both sides are of the same type, by the smoothness of $\Phi$,  the parameter pairs $(a,b)$ for both
sides must coincide. The proof ends immediately in this case.

Finally, we prove that it can not happen that the restrictions
of $\Phi$ to the two sides of $t_0$ have different types. Assume conversely that it happens. For example, when $t<t_0$ (or $t>t_0$)
$\Phi$ is the linear example with the parameter pair $(a_1,b_1)$, and when $t>t_0$ (or $t<t_0$ respectively) $\Phi$ is the Legendre example with
the parameter pair $(a_2,b_2)$. Besides $b_1>0$ and $b_2>0$, we also have $a_1^{-1}a_2>0$ because $a_1$ and $a_2$ have the same sign as $\tfrac{{\rm d}}{{\rm d}t}\theta(t_0)$.
Using the linear example to calculate the fundamental tensor
$(b_{ij})=(g_2(\tfrac{\partial}{\partial x_i},\tfrac{\partial}{\partial x_j}))$ at $\Phi(y)$, we get
\begin{equation}\label{010}
\left(
  \begin{array}{cc}
    b_{11} & b_{12} \\
    b_{21} & b_{22} \\
  \end{array}
\right)=
\left(
  \begin{array}{cc}
    a_1^{-1} & 0 \\
    0 & b_1^{-1} \\
  \end{array}
\right)\left(
         \begin{array}{cc}
           a_{11} & a_{12} \\
           a_{21} & a_{22} \\
         \end{array}
       \right)
       \left(
         \begin{array}{cc}
           a_1^{-1} & 0 \\
           0 & b_1^{-1} \\
         \end{array}
       \right),
\end{equation}
in which $(a_{ij})=(g_1(\tfrac{\partial}{\partial x_i},\tfrac{\partial}{\partial x_j}))$ is the fundamental tensor of $F_1$
at $y$. Using the Legendre example to calculate $(b_{ij})$ at $\Phi(y)$, we get
\begin{equation}\label{011}
\left(
  \begin{array}{cc}
    b_{11} & b_{12} \\
    b_{21} & b_{22} \\
  \end{array}
\right)=
\left(
  \begin{array}{cc}
    a_2^{-1} & 0 \\
    0 & b_2^{-1} \\
  \end{array}
\right)\left(
         \begin{array}{cc}
           a^{11} & a^{12} \\
           a^{21} & a^{22} \\
         \end{array}
       \right)
       \left(
         \begin{array}{cc}
           a_2^{-1} & 0 \\
           0 & b_2^{-1} \\
         \end{array}
       \right),
\end{equation}
where $(a^{ij})_{1\leq i,j\leq 3}$ is the inverse matrix of $(a_{ij})_{1\leq i,j\leq 3}$.
Notice that $(a_{ij})_{1\leq i,j\leq3}$ is blocked-diagonal by (\ref{000}), so
\begin{equation}\label{012}
\left(
            \begin{array}{cc}
              a^{11} & a^{12} \\
              a^{21} & a^{11} \\
            \end{array}
          \right)=
          \left(
            \begin{array}{cc}
              a_{11} & a_{12} \\
              a_{21} & a_{22} \\
            \end{array}
          \right)^{-1}
. \end{equation}

Summarizing (\ref{010}), (\ref{011}) and (\ref{012}), we get
\begin{eqnarray*}
& &\left(
            \begin{array}{cc}
              a_1^{-2}a_2^2a_{11}^2+a_1^{-1}a_2b_1^{-1}b_2 a_{12}a_{21} & a_1^{-2}a_2^2a_{11}a_{12}+
              a_1^{-1}a_2b_1^{-1}b_2a_{12}a_{22} \\
              a_1^{-1}a_2b_{1}^{-1}b_2a_{11}a_{21}+b_1^{-2}b_2^2
              a_{21}a_{22}&
              a_1^{-1}a_2b_1^{-1}b_2a_{12}a_{21}+b_1^2b_2^2 a_{22}^2 \\
            \end{array}
          \right)\\
&=&\left[\left(
        \begin{array}{cc}
          a_1^{-1}a_2 & 0 \\
          0 & b_1^{-1}b_2 \\
        \end{array}
      \right)
      \left(
        \begin{array}{cc}
          a_{11} & a_{12} \\
          a_{21} & a_{22} \\
        \end{array}
      \right)
\right]^2=\left(
            \begin{array}{cc}
              1 & 0 \\
              0 & 1 \\
            \end{array}
          \right),
\end{eqnarray*}
from which we see
\begin{eqnarray*}
a_1^{-2}a_2a_{11}a_{12}+a_1^{-1}a_2b_1^{-1}b_2a_{12}a_{22}
=a_1^{-1}a_2a_{12}(a_1^{-1}a_2a_{11}+b_1^{-1}b_2a_{22})=0.
\end{eqnarray*}
Since $a_1^{-1}a_2>0$, $b_1>0$, $b_2>0$, $a_{11}>0$ and $a_{22}>0$, we get $a_{12}=g_1(\tfrac{\partial}{\partial x_1},\tfrac{\partial}{\partial x_2})=0$ at $y$. This is a contradiction to (\ref{019}).
\end{proof}

%In Lemma \ref{thm:mainA-local},
%the condition that $\Phi$ fixes all $\phi$-coordinates is only for the
%convenience when we address the linear and Legendre examples in the proof, which is not necessary.
%From Lemma \ref{thm:mainA-local}
%to Theorem \ref{thm:mainA}, we just need to glue the two local descriptions that Lemma \ref{thm:mainA-local} provides for $\Phi$,
%and observe the two cases can not be glued together.

\begin{proof}[Proof of Theorem \ref{thm:mainA} when  $n=3$]
Let $C(U_1)$ be any
$SO(2)$-invariant connected  conic open subset of $\mathbb{R}^3$ in which
(\ref{eq:notzero}) is always satisfied, and $\Phi$ a local Hessian isometry from $F_1$ to $F_2$ which is defined in $C(U_1)$ and maps orbits to orbits. Without loss of generality, we assume $\Phi$ fixes all $\phi$-coordinates. Since by Lemma \ref{lemma-002} $F_1$ is nowhere locally Euclidean in $C(U_1)$, its Legendre transformation is nowhere locally linear in $C(U_1)$ either. So when we glue the local descriptions for $\Phi$  everywhere  in $C(U_1)$, the two cases in Lemma \ref{thm:mainA-local} can not be glued together. By the connectedness of $C(U_1)$ and the smoothness of $\Phi$, either $\Phi$ is uniformly locally modelled by the the same linear example everywhere in $C(U_1)$, or it is
uniformly locally modelled by the same Legendre example everywhere in $C(U_1)$. In either case, Theorem \ref{thm:mainA} when $n=3$
 is proved.
\end{proof}
\label{subsection-3-4}

\subsection{Proof of  Theorem \ref{thm:mainB}.}
If  (\ref{eq:zero}) is fulfilled at every point of $C(U_1)$, then by Lemma \ref{lemma-002}, the following ODE is satisfied:
 $$-\cos t\sin t\tfrac{{\rm d}^2}{{\rm d}t^2}f(t)+(\cos^2 t-\sin^2 t)
\tfrac{{\rm d}}{{\rm d}t}f(t)=0.$$
Its  solution is
$f(t)=c_1+c_2\cos 2t$ and  the corresponding Minkowski norms are Euclidean which proves the first statement of Theorem \ref{thm:mainB}.

In order to prove the remaining statements, observe that  by (\ref{eq:zero}) and Lemma \ref{lemma-002},
the ODEs (\ref{008}) and (\ref{009}) in Lemma \ref{lemma-000} coincide for almost all relevant values of $t$, i.e., the ODE
${\cos t\sin t}\tfrac{{\rm d}}{{\rm d}t}\theta(t)={\cos\theta(t)\sin\theta(t)}$ is satisfied in $C(U_1)$. Then we can explicitly solve $\theta(t)$ from this ODE, then solve $h(\theta)$ from (\ref{eq:t2}), and see that the corresponding isometry is
linear as we claimed in  Theorem \ref{thm:mainB}.

\section{Proof of Corollary \ref{thm:main2}.} \label{sec:prooflandsberg}
Let $F$ be a Finsler metric on
 $M$ with $\dim M=n\geq 3$. Assume that for some $k$ with $1\leq k\leq n/2$   and for each tangent space $T_pM$, the Minkowski norm $F_{|T_{p}M} $ is $SO(k)\times SO(n-k)$-invariant and that the Landsberg curvature of $F$ vanishes.

We need to show that
for  every  smooth curve $c:[0,1]\to M$ the Berwald parallel  transport  $\tau_{1}:T_{c(0)}M\to T_{c(1)}M$
 is linear.   As recalled in Section Theorem \ref{thm:mainB}, for each $s\in[0,1]$, the Berwald parallel transport
$\tau_{s}:T_{c(0)}M\to T_{c(s)}M$ along $c_{|[0,s]}$
is a Hessian isometry from $F_{|T_{c(0)}M}$ to  $F_{|T_{c(s)}M}$.

At each  tangent space $T_pM$   we consider the Hessian metric of $F_{|T_pM}$. If at the point $c(0)$
the  connected  isometry group $G_0$ of the Hessian metric is  bigger  than $SO(k)\times SO(n-k)$, then this is so at every point $p\in M$ (assumed connected) and by Theorem \ref{thm:main1} the metric $F$ is Riemannian and therefore Berwald.

If the connected  isometry group $G_0$ of the Hessian metric coincides with    $SO(k)\times SO(n-k)$, then every isometry $\tau_s$ maps orbits to orbits  so we can apply Theorems \ref{thm:mainA} and \ref{thm:mainB}.  Note that since  $\tau_1$ is
 positive  homogeneous, the condition that  $\tau_1$ is linear is equivalent to the condition that  the second partial derivativesof $\tau_1$
with respect to the linear variables in $T_{c(0)}M$ vanish. If this condition is fulfilled at almost every point of $T_{c(0)}M\setminus \{0\},$ it is fulfilled at every point.

Let us consider the conic open sets $C(U')$ and $C(U'')$ of $T_{c(0)}M\setminus \{0\}$
 as in Section \ref{section_results}: the set $C(U')$ contains all  $y$ such that \eqref{eq:notzero} is  fulfilled, and the set $C(U'')$ is the set of inner points of the compliment  $T_{c(0)}M\setminus (\{0\}\cup C(U'))$.  The union $C(U') \cup C(U'')$ is  dense in  $T_{c(0)}M$.

By Theorem \ref{thm:mainB} the restriction of $\tau_1$ to each connected component of $C(U'')$ is linear.

Let us show that  the restriction of $\tau_1$ to each connected component of $C(U')$  which we call $C(U'_1) $
is also  linear.
In order to do it, we consider the Legendre transformation $\Psi:T_{c(0)}M\to T_{c(0)}M$ corresponding to $F_{|T_{c(0)}M}$ and
the following   two subsets of the interval $[0,1]$:
 \begin{eqnarray*}
 T_1:&=& \{s\in  [0,1] \mid {\tau_s}_{|C(U'_1)}   \textrm{  is a linear transformation}\}\quad\mbox{and}\\
 T_2:&=&\{s\in [0,1]\mid {\tau_s}_{|C(U'_1)}\textrm{ is the composition of a linear transformation and }\Psi\}.
 \end{eqnarray*}
The subsets are disjunkt since $\Psi$ is not  Euclidean in $C(U_1)$  (see also Lemma \ref{lemma-002}).
They  satisfy  $T_1\cup T_2=[0,1]$ by Theorem \ref{thm:mainA}.
Notice that $\tau_s$ for $s\in[0,1]$ are a smooth family of Hessian isometries. $T_1$  can be  defined by the  condition that the second partial derivatives of $\tau_s$  vanish
for all $y\in C(U_1)$  and this is a finite system of equations. Similarly, $T_2$ can be   defined by the condition that the second partial derivatives of $\tau_s\circ \Psi $ vanish
for all $y\in C(U_1)$. So both $T_1$ and $T_2$ are closed subsets of $[0,1]$.
 By
the connectedness of $[0,1]$, one  of the sets $T_1$, $T_2$ must  be empty. But $T_1\ne \varnothing$, since $\tau_0$ is linear. Thus,    $T_1=[0,1]$ which implies that  ${\tau_1}_{|C(U'_1)}$ is  linear.

Finally, we have proved that  the restriction of $\tau_1$ to every connected component of an open everywhere dense subset of $T_{c(0)}M$  is linear; as explained above it implies that  $\tau_1$ is linear. Corollary  \ref{thm:main2} is proved.

\subsection*{ Acknowledgements.} The first author sincerely thanks Yantai University, Sichuan University, and Jena University
for hospitality during the preparation of this paper.
The first author is supported by National Natural Science
Foundation of China (No.~11821101, No.~11771331),
Beijing Natural Science Foundation
(No.~00719210010001, No.~1182006), Research Cooperation Contract (No.~Z180004), and Capacity Building for Sci-Tech  Innovation -- Fundamental Scientific Research Funds (No.~KM201910028021). The second author thanks  Capital Normal University for the  hospitality,
 Thomas Wannerer  for useful discussions and
DFG  for partial support via projects MA 2565/4 and  MA 2565/6.

\vspace{5mm}

\begin{thebibliography}{99}
  \bibitem{AlvarezPaiva2006} J. Alvarez Paiva, {  Some problems on Finsler geometry. }  Handbook of differential geometry. Vol. II, 1--33, Elsevier/North-Holland, Amsterdam, 2006.

\bibitem{An1995} G.S. Asanov, Finsler cases of GF-space, Aequationes Math. {\bf 49} (3) (1995), 234-251.

\bibitem{An1998} G.S. Asanov, Finslerian metric functions over the product $\mathbb{R}\times M$ and their potential applications, Rep. Math. Phys. {\bf 41} (1) (1998), 117-132.

\bibitem{As2006} G.S. Asanov, Finsleroid-Finsler space with Berwald and Landsberg conditions, Rep. Math. Phys. {\bf 58} (2006), 275-300.

\bibitem{As2007} G.S. Asanov, Finsleroid-Finsler space with geodesic spray coefficients, Publ. Math. Debrecen {\bf 71} (2007), 397-412.





\bibitem{Bao2007} D.  Bao, {  On two curvature-driven problems in Riemann-Finsler geometry.} Finsler geometry, Sapporo 2005 - in memory of Makoto Matsumoto, 19--71, Adv. Stud. Pure Math., {\bf 48}(2007), Math. Soc. Japan, Tokyo.

 \bibitem{BaoChernShen1997} D. Bao, S.S.  Chern, Z.  Shen,  {   Rigidity issues on Finsler surfaces. }  Rev. Roumaine Math. Pures Appl. {\bf 42}(1997) 707--735.

	\bibitem{BCS} D. Bao, S.S.  Chern, Z.  Shen,
An introduction to Riemann-Finsler geometry,
Graduate Texts in Mathematics, 200. Springer-Verlag, New York, 2000.
	
\bibitem{Berwald1941} L.   Berwald, {  Ueber Finslersche und Cartansche Geometrie. I. Geometrische Erkl\"arungen der Kr\"ummung und des Hauptskalars eines zweidimensionalen Finslerschen Raumes,} (German) Mathematica, Timisoara {\bf 17}(1941),  34--58.
	
 \bibitem{Berwald1947} L.   Berwald, {  Ueber Finslersche und Cartansche Geometrie. IV. Projektivkr\"ummung allgemeiner affiner R\"aume und Finslersche R\"aume skalarer Kr\"ummung, }  Ann. of Math. (2) {\bf 48 }(1947),  755--781.

\bibitem{Bl1906} G. A.  Bliss,  { A generalization of the notion of angle,} Trans. Amer. Math. Soc. {\bf 7}(1906), no. 2, 184--196.

\bibitem{BMR}  A.V. Bolsinov, V. S. Matveev, S. Rosemann,  {
Local normal forms for c-projectively equivalent metrics and proof of the Yano-Obata conjecture in arbitrary signature. Proof of the projective Lichnerowicz conjecture for Lorentzian metrics},  arXiv:1510.00275  .

\bibitem{Br1967} F. Brickell, A theorem on homogeneous functions,    J. London Math. Soc, 42 (1967), 325--329.

\bibitem{Cartan} \'El.   Cartan, Sur les espaces de Finsler, C. R. Acad. Sci. Paris, 196(1933), 582--586.

\bibitem{Yau} S. Y. Cheng, S.-T. Yau,
The real Monge-Ampere equation and affine flat structures, Proceedings of the 1980 Beijing Symposium on Differential Geometry and Differential Equations, Vol. 1, 2, 3 (Beijing, 1980), 339--370, Sci. Press Beijing, Beijing, 1982.

\bibitem{CS2005} S.-S. Chern and Z. Shen, Riemann-Finsler Geometry, Nankai Tracts in Mathematics {\bf 6},
    World Scientific, 2005.

\bibitem{Cr2011}  M. Crampin, { On Landsberg spaces and the Landsberg-Berwald problem,} Houston J. Math. {\bf 37}(2011), no. 4, 1103--1124.

\bibitem{Cr2020} M. Crampin, {  Finsler spaces of $(\alpha, \beta)$ type and semi-C-reducibility}, preprint, (2020).

%\bibitem{De1953} A. Deicke, \"{U}ber die Finsler-Raume mit
%$A_i=0$, Arch. Math. {\bf 4} (1953), 45-51.

\bibitem{DX2015} S. Deng and M. Xu,
Left invariant Clifford-Wolf homogeneous
$(\alpha,\beta)$-metrics on compact
semisimple Lie groups, Transform. Groups
{\bf20} (2) (2015), 395--416.




\bibitem{DX2016} S. Deng and M. Xu, $(\alpha_1,\alpha_2)$-Metrics
and Clifford-Wolf homogeneity, J. Geom. Anal. {\bf26} (2016), 2282--2321.




\bibitem{Dodson2006} C. Dodson, { A short review on Landsberg spaces,}  Workshop on Finsler and semi-Riemannian geometry, 24--26 May 2006, San Luis Potosi, Mexico.


\bibitem{FL2018} H. Feng, M. Li, An equivalence theorem for
a class of Minkowski spaces, preprint, (2018), arXiv:1812.11938v1.

\bibitem{GD} I. M. Gelfand, I. Ja. Dorfman, Hamiltonian operators and algebraic structures
 associated with them, (Russian) Funktsional. Anal. i Prilozhen. {\bf13}(1979), no. 4,
13--30, 96.

\bibitem{Ha1903} G. Hamel, {\"Uber die Geometrieen, in denen die Geraden die K\"urzesten sind,} (German) Math. Ann. {\bf 57}(1903), no. 2, 231--264.

%\bibitem{HM2018}  L. Huang, X. Mo,   Homogeneous Einstein metrics on $(4n+3)$-dimensional spheres,
%Canad. Math. Bull., (2018), doi:10.4153/S000843951800139.


\bibitem{Is1955}  S. Ishihara, {
Homogeneous Riemannian spaces of four dimensions,}
J. Math. Soc. Japan {\bf 7}(1955), 345--370.

\bibitem{MZ2002} M. Ji, Z.  Shen,
{ On strongly convex indicatrices in Minkowski geometry,}
Canad. Math. Bull. 45 (2002), no. 2, 232--246.

\bibitem{Kozma}  L. Kozma, On holonomy groups of Landsberg manifolds. Tensor (N.S.) {\bf 62}(2000), 87--90.

\bibitem{Landsberg1908} G. Landsberg, {\it \"Uber die Kr\"ummung in der Variationsrechnung,}  Math. Ann. {\bf 65}(1908),  313--349.


\bibitem{La1967} D. Laugwitz, Differentialgeometrie in Vektorr\"aumen, unter besonderer Ber\"ucksichtigung der unendlichdimensionalen R\"aume. Braunschweig 1965.

\bibitem{simon} A. M. Li, U. Simon,   G. S. Zhao,  Global affine differential geometry of hypersurfaces. De Gruyter Expositions in Mathematics, {\bf  11}. Walter de Gruyter \& Co., Berlin, 1993.

\bibitem{Ma1974} M. Matsumoto, On Finsler spaces with Randers metric and special forms of important tensors. J. Math. Kyoto Univ. {\bf 14} (1974), 477--498.

%\bibitem{Ma1972} M. Matsumoto, On C-reducible Finsler spaces, Tensor, N. S. {\bf 24} (1972), 29-37.

\bibitem{Ma1996} M. Matsumoto,  { Remarks on Berwald and Landsberg spaces,}  Finsler geometry (Seattle, WA, 1995), 79--82,
Contemp. Math. {\bf  196}, Amer. Math. Soc., Providence, RI, 1996.

\bibitem{Matveev2009b} V. S. Matveev, {On ``All regular Landsberg metrics are always Berwald" by Z. I. Szabo,}  Balkan Journ. Geom. {\bf  14}(2009),  50--52.

%\bibitem{MH1978} M. Matsumoto and S. H\={o}j\={o}, A
%conclusive theorem on C-reducible Finsler spaces, Tensor,
%N. S. {\bf 32} (1978), 225-230.

%\bibitem{MS1943} D. Montgomery and H. Samelson, Transformation groups of spheres, Ann. Math. {\bf44} (1943), 454-470.

\bibitem{MS1979} M. Matsumoto, C. Shibata, On semi-C-reducibility, T-tensor$=0$, and S4-likeness of
    Finsler spaces, J. Math. Kyoto Univ. {\bf 19} (2) (1979),
    301-314.

\bibitem{MT2012}
V.~S.~Matveev, M.~Troyanov,
 The Binet-Legendre metric in Finsler geometry,
Geom. Topol. {\bf  16} (2012), 2135--2170.





\bibitem{MZ2014} X. Mo and L. Zhou, The curvatures of spherically
symmetric Finsler metrics in $\mathbb{R}^n$, arXiv:1202.4543.


\bibitem{Ob1955} M. Obata, On $n-$dimensional homogeneous spaces of Lie groups of dimension greater than $n(n-1)/2$,
 J. Math. Soc. Japan {\bf 7}(1955), 371--388.


 \bibitem{Rund} H.
Rund, { The differential geometry of Finsler spaces, } Die Grundlehren der Mathematischen Wissenschaften. {\bf 101}(1959). Berlin-G\"ottingen-Heidelberg: Springer-Verlag.


\bibitem{Sc1968} R. Schneider,
{ \"Uber die Finslerr\"aume mit $S_{ijkl}=0$},  (German)
Arch. Math. (Basel) {\bf 19}(1968), 656--658.

\bibitem{Sc2013} R. Schneider, Convex Bodies: The Brunn-Minkowski Theory, 2nd ed., Cambridge University Press, 2013.

\bibitem{Sh2007} H. Shima, The Geometry of Hessian Structures, World Scientific, 2007.

\bibitem{Sh2013}  H. Shima,  Geometry of Hessian structures. Geometric science of information, 37--55, Lecture Notes in Comput. Sci., 8085, Springer, Heidelberg, 2013.

\bibitem{Sh2001} Z. Shen, Lectures on Finsler geometry, World Scientific, 2001.


\bibitem{Shen2009a}  Z. Shen, { Some open problems in Finsler geometry}, \\ \url{https://www.math.iupui.edu/~zshen/Research/papers/Problem.pdf} (Posted in   2009).
\bibitem{Sh2009} Z. Shen, On a class of Landsberg metrics in Finsler geometry, Canad. J. Math. {\bf 61} (2009), 1357-1374.

%\bibitem{XD2018} M. Xu and S. Deng, Geodesic and curvature of piecewise flat Finsler surfaces, J. Geom. Anal.
%    {\bf 28} (2) (2018), 1341-1372.

\bibitem{Sz1981} Z. I. Szab\'o,  {  Positive definite Berwald spaces (Structure theorems)},
Tensor N. S. {\bf 35}(1981) 25--39.


\bibitem{XD2019} M. Xu and S. Deng, The Landsberg equation of a Finsler space, Ann. Sc. Norm. Super. Pisa Cl. Sci. (2019), doi:10.2422/2036-2145.201809\_015, \url{arXiv:1404.3488}.



%\bibitem{Vi2004} C. Vincze, On the curvature of the indicatrix surface in Three-dimensional Minkowski spaces, Periodica Mathematica Hungarica
%    {\bf48} (1) (2004), doi: 10.1007/BF03379747.

%\bibitem{Vi2017} C.  Vincze, { An observation on Asanov's unicorn metrics,}  Publ. Math. Debrecen {\bf 90}(2017), no. 1-2, 251--268.

\bibitem{Ya1953} K. Yano, {
On $n$-dimensional Riemannian spaces admitting a group of motions of order $n(n-1)/2+1$,}
Trans. Amer. Math. Soc. {\bf 74}(1953), 260--279.

\bibitem{ZWL2019} S. Zhou, J. Wang and B. Li, On a class of almost regular Landsberg metrics, Sci. China Math. {\bf 62} (5) (2019), 935-960.

\end{thebibliography}
\end{document}